\newtheorem{theorem}{Theorem}[section]
\theoremstyle{plain}
\newtheorem{lemma}[theorem]{Lemma}
\newtheorem{proposition}[theorem]{Proposition}
\newtheorem{remark}[theorem]{Remark}
\numberwithin{equation}{section}
\def\rad{\operatorname{Rad}}
\def\mul{\operatorname{Mul}}
\def\inn{\operatorname{Inn}}
\def\mlt{\operatorname{Mlt}}
\def\im{\operatorname{Im}}
\def\core#1#2{\mathrm{core}_{#1}{#2}}
\def\cl#1{\mathrm{cl}(#1)}
\def\f{\varphi} 
\title[Loops with commuting inner mappings]{Small loops of nilpotency class three with commutative inner mapping groups}
\author{Ale\v{s} Dr\'apal}
\email[Dr\'apal]{drapal@karlin.mff.cuni.cz}
\address[Dr\'apal]{Department of Algebra, Charles University, Sokolovsk\'a 83, 186 75 Prague,
Czech Republic}
\author{Petr Vojt\v{e}chovsk\'y}
\email[Vojt\v{e}chovsk\'y]{petr@math.du.edu}
\address[Vojt\v{e}chovsk\'y]{Department of Mathematics, University of Denver, 2360 S Gaylord St,
Denver, Colorado 80208, USA}
\thanks{A.~Dr\'apal supported by the Institutional Grant MSM 0021620839 and
by the Grant Agency of the Czech Republic 201/09/0296. P.~Vojt\v{e}chovsk\'y supported
by the Enhanced Sabbatical grant of the University of Denver.}
\keywords{Loop, central nilpotency, nilpotent class three, commutative inner mapping group,
symmetric trilinear form, loop of Cs\"org\H{o} type}
\subjclass[2000]{20N05}
\begin{document}

\begin{abstract}
Groups with commuting inner mappings are of nilpotency class at most two, but
there exist loops with commuting inner mappings and of nilpotency class higher
than two, called \emph{loops of Cs\"org\H{o} type}. In order to obtain small
loops of Cs\"org\H{o} type, we expand our programme from \emph{Explicit
constructions of loops with commuting inner mappings},
European~J.~Combin.~\textbf{29} (2008), 1662--1681, and analyze the following
setup in groups:

Let $G$ be a group, $Z\le Z(G)$, and suppose that $\delta:G/Z\times G/Z\to Z$
satisfies $\delta(x,x)=1$, $\delta(x,y)=\delta(y,x)^{-1}$,
$z^{yx}\delta([z,y],x) = z^{xy}\delta([z,x],y)$ for every $x$, $y$, $z\in G$,
and $\delta(xy,z) = \delta(x,z)\delta(y,z)$ whenever $\{x,y,z\}\cap G'$ is not
empty.

Then there is $\mu:G/Z\times G/Z\to Z$ with $\delta(x,y) =
\mu(x,y)\mu(y,x)^{-1}$ such that the multiplication $x*y=xy\mu(x,y)$ defines a
loop with commuting inner mappings, and this loop is of Cs\"org\H{o} type (of
nilpotency class three) if and only if $g(x,y,z) =
\delta([x,y],z)\delta([y,z],x)\delta([z,x],y)$ is nontrivial.

Moreover, $G$ has nilpotency class at most three, and if $g$ is nontrivial then
$|G|\ge 128$, $|G|$ is even, and $g$ induces a trilinear alternating form. We
describe all nontrivial setups $(G,Z,\delta)$ with $|G|=128$. This allows us to
construct for the first time a loop of Cs\"org\H{o} type with an inner mapping
group that is not elementary abelian.
\end{abstract}

\maketitle

\section{Introduction}

Let $Q$ be a loop with neutral element $1$. For $x\in Q$, let $L_x:Q\to Q$,
$y\mapsto xy$ be the \emph{left translation} by $x$, and $R_x:Q\to Q$,
$y\mapsto yx$ the \emph{right translation} by $x$ in $Q$. Then
\begin{displaymath}
    \mlt Q=\langle L_x,\,R_x;\;x\in Q\rangle
\end{displaymath}
is the \emph{multiplication group} of $Q$,
\begin{displaymath}
    \inn Q=\{\varphi\in\mlt{Q};\;\varphi(1)=1\}
\end{displaymath}
is the \emph{inner mapping group} of $Q$, and
\begin{displaymath}
    Z(Q) = \{x\in Q;\;\varphi(x)=x\text{ for all $\varphi\in\inn Q$}\}
\end{displaymath}
is the \emph{center} of $Q$.

Set $Z_1(Q) = Z(Q)$ and define $Z_{i+1}(Q)$ by $Z(Q/Z_i(Q)) = Q/Z_{i+1}(Q)$.
Then $Q$ is \emph{(centrally) nilpotent} if $Z_m(Q)=1$ for some $m$, and the
\emph{nilpotency class} $\cl{Q}$ of $Q$ is the least integer $m$ for which
$Z_m(Q)=1$ occurs.

The \emph{associator subloop} $A(Q)$ of $Q$ is the smallest normal subloop of
$Q$ such that $Q/A(Q)$ is a group. The \emph{derived subloop} $Q'$ is the
smallest normal subloop of $Q$ such that $Q/Q'$ is a commutative group. Set
$Q^{(1)} = Q'$ and $Q^{(i+1)} = (Q^{(i)})'$. Then $Q$ is \emph{solvable} if
$Q^{(m)}=1$ for some $m$.

Questions concerning relations between nilpotency and solvability of $Q$, $\inn
Q$ and $\mlt Q$ go back at least to 1940s and the foundational paper of Bruck,
cf. \cite[p. 278]{BruckTAMS}. Figure \ref{Fg:QIQMQ} summarizes what is
currently known about this problem for \emph{finite} loops. In more detail and
in chronological order:
\begin{enumerate}
\item[$\bullet$] if $Q$ is nilpotent then $\mlt{Q}$ is solvable, by
    \cite[Corollary II, p. 281]{BruckTAMS},
\item[$\bullet$] if $\mlt{Q}$ is nilpotent then $Q$ is nilpotent, by
    \cite[Corollary III, p. 282]{BruckTAMS},
\item[$\bullet$] if $\mlt{Q}$ is solvable then $Q$ is solvable, by
    \cite{Vesanen},
\item[$\bullet$] if $\inn{Q}$ is nilpotent then $\mlt{Q}$ is solvable, by
    \cite{Mazur},
\item[$\bullet$] if $\inn{Q}$ is nilpotent then $Q$ is nilpotent, by
    \cite{Niemenmaa}.
\end{enumerate}
This accounts for all nontrivial implications in Figure \ref{Fg:QIQMQ}.
Moreover, no implications in the figure are missing, as is indicated by the
dotted lines that represent counterexamples and give some information about
their nature. For instance, there exists a Steiner loop $Q$ of order $16$ which
is solvable but $\inn{Q}$ is not solvable.

\setlength{\unitlength}{1.5mm}
\begin{figure}\centering\begin{small}\input{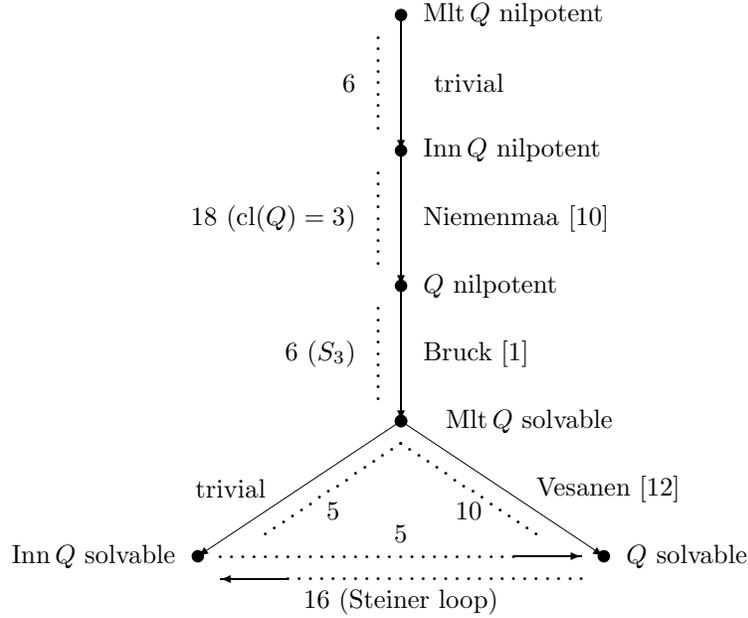}\end{small}
\caption{Implications among nilpotency and solvability of a finite loop $Q$,
its inner mapping group $\inn Q$, and its multiplication group $\mlt Q$.}\label{Fg:QIQMQ}
\end{figure}

Our understanding of the situation is very incomplete, however. If $\cl{Q}=2$
then $\inn{Q}$ is abelian by a result of Bruck \cite{BruckTAMS} but,
conversely, if $\inn{Q}$ is abelian we are still at a loss regarding the
structure of the loop $Q$. For instance, we do not know if there exists a loop
$Q$ with $\inn{Q}$ abelian and $\cl{Q}>3$. We also do not know if there exists
an odd order loop $Q$ with $\inn Q$ abelian and $\cl{Q} = 3$.

We call loops with $\inn{Q}$ abelian and $\cl{Q} \ge 3$ of \emph{Cs\"org\H{o}
type} since Piroska Cs\"org\H{o} gave the first example of such a loop
\cite{Csorgo}. Her example is of order $128$ and no known loop of Cs\"org\H{o}
type has smaller order. In \cite{DV1} we have reconstructed Cs\"org\H{o}'s
example by a method that defines a loop by modifying a group operation. We have
shown that a vast number of such examples can be obtained by this method, even
for the order $128$. All known examples of loops of Cs\"org\H{o} type of order
$128$ are very close to groups in the sense that their associator subloop
consists of only two elements

In this paper we explore the power of the construction invented in \cite{DV1}.
We show that it cannot be used for odd order loops and that it yields no loop
of order less than $128$. We also describe all $125$ groups of order $128$ that
can be used in the construction as the starting point. In all cases we obtain a
loop $Q$ such that $K = Q/A(Q)$ has $K' = Z(K)$ of order $8$. There are $10$
such groups $K$ and all of them can occur in our construction. However, we do
not know if all loops of Cs\"org\H{o} type can be obtained by our construction.

\medskip

The original setup of \cite{DV1} with its complicated subgroup structure (see
\eqref{Eq:OldG}) is recalled in \S \ref{Sc:OriginalSetup}. In the same section
we introduce the much simpler new setup that is based only on the group $G$, a
central subgroup $Z$, and a mapping $\delta:G/Z\times G/Z\to Z$. We show that
every original setup gives rise to a new setup, cf. Proposition
\ref{Pr:OriginalToNew}.

After defining the radical $\rad\f$ and the multiplicative part $\mul\f$ for
general mappings $\f$ in \S \ref{Sc:RadMul}, we investigate the radical and the
multiplicative part of $\delta$ and of the three multiplicative mappings
$f(x,y,z) = \delta([x,y],z)$, $g(x,y,z) = f(x,y,z)f(y,z,x)f(z,x,y)$ and
$h(x,y,z) = [x,[y,z]]$.

In \S \ref{Sc:ConstructLoop} we show how to obtain an original setup from a new
setup. As far as the subgroup structure is concerned, it suffices to take
$R=\rad\delta$ and $N=G'R$, cf. Proposition \ref{Pr:NewToOriginal}. There is
then an obvious choice for the mapping $\mu$ that will turn the new setup into
an original setup. With any valid choice of $\mu$ we have $\cl{G}\le 3$,
$\cl{G[\mu]}\le 3$. Moreover, $\cl{G[\mu]}=3$ if and only if the mapping $g$ is
nontrivial.

\S \ref{Sc:MinProp} is concerned with minimal setups, that is, with nontrivial
new setups with $G$ as small as possible. We know from \cite{DV1} that $|G|\le
128$ in a minimal setup. As a consequence of the Hall-Witt identity, we show
that $|G|$ must be even in a nontrivial setup, and that $\im f$ is a cyclic
group of even order in a minimal setup. A more detailed analysis implies that
$|G|\ge 128$, and that the equality $|G|=128$ holds if and only if we find
ourselves in one of the three scenarios of Theorem \ref{Th:Scenarios}.
Consequently, $\mul\delta = N = G'\rad\delta$ in all minimal setups, once again
showing the relevance of $\mul\delta$ and $\rad\delta$ to the problem at hand.

In \S \ref{Sc:Min} we also show that the three scenarios of Theorem
\ref{Th:Scenarios} do occur, and we describe how to construct all minimal
setups. Proposition \ref{Pr:GModZ} characterizes the groups $G/Z$ that occur in
minimal setups. The groups $G$ that occur in minimal setups can be found in
Subsection \ref{Ss:ComputationalResults}. For each such group $G$ we show how
to construct all minimal setups based on it, starting with the determinant $g$
that uniquely specifies the mapping $f$, which gives rise to $\delta$ via some
free parameters.

There are many examples of loops of Cs\"org\H{o} type of order $128$ due to the
free parameters in the transition from $f$ to $\delta$, and also due to the
numerous choices of $\mu$ for a given $\delta$. Some explicit examples can be
found in \S \ref{Sc:Examples}.

As a byproduct, we construct for the first time a loop $Q$ of Cs\"org\H{o} type
for which $\inn{Q}$ is not elementary abelian. Recall that the structure of
abelian $\inn{Q}$ is a frequently studied but nor very well understood problem,
consisting mostly of nonexistence results. It can be deduced from \cite[Remarks
5.3 and 5.5]{CsorgoKepka} that if $Q$ is a $p$-loop (that is, $|Q|$ is a power
of $p$, $p$ a prime) and $\inn Q$ is abelian then $\inn{Q}$ is a $p$-group. By
\cite{NiemenmaaKepkaJAlg}, $\inn{Q}$ is cyclic if and only if $Q$ is an abelian
group. If $Q$ is a $p$-loop with $\cl{Q}>2$ then $\inn{Q}$ is isomorphic
neither to $\mathbb Z_p\times\mathbb Z_p$, by \cite[Theorem
4.2]{CsorgoJancarikKepka}, nor to $\mathbb Z_p\times\mathbb Z_p\times\mathbb
Z_p$, by \cite{Csorgo}. If $Q$ is finite, $k\ge 2$, $p$ is an odd prime
\cite{NiemenmaaBAMS} or an even prime \cite[Theorem 4.1]{CsorgoJancarikKepka},
then $\inn{Q}$ is not isomorphic to $\mathbb Z_{p^k}\times\mathbb Z_p$. On the
positive side, taking the examples of \cite{DV1} and of this paper into
account, we now know that $\inn{Q}$ can be isomorphic to $(\mathbb Z_2)^6$ or
to $(\mathbb Z_4)^2\times(\mathbb Z_2)^2$ when $Q$ is a $2$-loop with
$\cl{Q}>2$.

\section{The original setup and the new setup}\label{Sc:OriginalSetup}

As in \cite{DV1}, let $G$ be a group and
\begin{equation}\label{Eq:OldG}
    Z\le R\le N\unlhd G,\,Z\le Z(G),\,R\unlhd G,\, G'\le N,\,N/R\le Z(G/R).\tag{$A_0$}
\end{equation}
Let $\mu:G/R\times G/R\to Z$ be a mapping satisfying $\mu(1,x)=\mu(x,1)=1$ for
every $x\in G$, where we write $\mu(x,y)$ instead of the formally correct
$\mu(xR,yR)$. Define $\delta:G/R\times G/R\to Z$ by
\begin{equation}\label{Eq:Mu0}
    \delta(x,y)=\mu(x,y)\mu(y,x)^{-1}.\tag{$A_1$}
\end{equation}
Furthermore, consider the conditions
\begin{align}
    &\mu(xy,z)=\mu(x,z)\mu(y,z)\text{ whenever $\{x,y,z\}\cap N\ne\emptyset$,}\label{Eq:Mu1}\tag{$A_2$}\\
    &\mu(x,yz)=\mu(x,y)\mu(x,z)\text{ whenever $\{x,y,z\}\cap N\ne\emptyset$,}\label{Eq:Mu2}\tag{$A_3$}\\
    &z^{yx}\delta([z,y],x) = z^{xy}\delta([z,x],y)\text{ for every $x$, $y$, $z\in G$.}\label{Eq:D}\tag{$A_4$}
\end{align}

The \emph{original setup} consists of $G$, $Z$, $R$, $N$, $\mu$ and $\delta$ as
above, satisfying \eqref{Eq:OldG}--\eqref{Eq:D}.

\begin{remark}
Note that we have also assumed $N'=1$ as part of \eqref{Eq:OldG} in \cite{DV1}.
This is because we reconstructed the first example of Cs\"{o}rg\H{o} by means
of nuclear extensions, cf. \cite[Section 2]{DV1}, which we understood only with
the added assumption that the normal nuclear subgroup is abelian. But the
assumption $N'=1$ is never used in \S\S 4--7 of \cite{DV1}. In particular, it
is not used in any of the results of \cite{DV1} that we need here.
\end{remark}

From the original setup we can define a loop $G[\mu]$ on $G$ by
\begin{displaymath}
    x*y = xy\mu(x,y),
\end{displaymath}
and we obtain:

\begin{proposition}[\cite{DV1}]\label{Pr:DV1}
Let $Q=G[\mu]$ be the loop obtained from an original setup. Then $\cl G \le 3$,
$\cl Q\le 3$ and $\inn Q$ is an abelian group. Moreover, $\cl{Q}=3$ if and only
if
\begin{equation}\label{Eq:Nontrivial}
    \delta([x,y],z)\delta([y,z],x)\delta([z,x],y)\ne 1\text{ for some $x$, $y$, $z\in G$.}
\end{equation}
\end{proposition}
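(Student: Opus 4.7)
The plan is to reproduce, in the cleaner setup-based language, the argument of \cite[\S\S 4--7]{DV1}. First I would verify that $G[\mu]$ is a loop: since $\mu$ factors through $G/R\times G/R$ and takes values in $Z\le Z(G)$, the left translation $L^*_x\colon y\mapsto xy\mu(x,y)$ is constant on each coset $yR$ modulo the central factor $\mu(x,y)$ and sends $yR$ bijectively onto $xyR$; hence $L^*_x$ is a bijection of $G$, and symmetrically for right translations. The bound $\cl G\le 3$ is then immediate from \eqref{Eq:OldG}, because $G'\le N$, $[G,N]\le R$ and $R\le Z(G)$ together yield $[[[G,G],G],G]\le [R,G]=1$.

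Next I would describe $\inn Q$ explicitly. Using the partial bilinearity \eqref{Eq:Mu1}--\eqref{Eq:Mu2}, each generator of $\inn Q$ can be written as a conjugation by some element of $G$ twisted by a $Z$-valued correction expressed through $\mu$ and $\delta$. The compatibility hypothesis \eqref{Eq:D}, evaluated on a generic $z\in G$, records precisely that the conjugation parts of any two such mappings commute up to a central $\delta$-factor; since the twists already lie in $Z\le Z(G)$, any two inner mappings of $Q$ then commute, and $\inn Q$ is abelian. Combined with $Z\le Z(Q)$, the containment of all loop associators inside $Z$, and $G/N$ abelian, this gives $Z_2(Q)\supseteq N$ and hence $\cl Q\le 3$.

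For the final equivalence, $\cl Q\le 2$ is equivalent to every iterated loop commutator of the form $[[x,y],z]$ being trivial in $Q$. Expanding via $x*y=xy\mu(x,y)$, invoking \eqref{Eq:Mu1}--\eqref{Eq:Mu2} (which apply because $[x,y]\in G'\le N$) together with the antisymmetry of $\delta$, each such iterated commutator reduces to $\delta([x,y],z)$ up to elements already known to be central in $Q$. The Hall--Witt identity in $G$ (whose class is at most three) then forces the cyclic symmetrisation to collapse to $g(x,y,z)=\delta([x,y],z)\delta([y,z],x)\delta([z,x],y)$, so $\cl Q=3$ iff $g\not\equiv 1$. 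The main obstacle is the bookkeeping of the second paragraph: one must identify precisely the $Z$-valued twist in each generator of $\inn Q$ and check that \eqref{Eq:D} alone is strong enough to force every pairwise commutator of these twisted conjugations to vanish in $Z$; once this is in hand, the class-three analysis and the equivalence with $g$ are routine cocycle calculations modulo $Z$.
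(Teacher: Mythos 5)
There is a genuine gap, and also one concretely wrong step. First the wrong step: you assert that $\cl G\le 3$ is ``immediate from \eqref{Eq:OldG}'' because $G'\le N$, $[G,N]\le R$ and $R\le Z(G)$. The original setup does \emph{not} assume $R\le Z(G)$; it only assumes $Z\le Z(G)$, $R\unlhd G$ and $N/R\le Z(G/R)$, and $R$ is genuinely noncentral in relevant examples (in the scenario (iii) example of \S\ref{Sc:Examples} one has $R=\rad\delta=\langle g_5,g_6,g_7\rangle$ with $[g_6,g_1]=g_7\ne 1$). The chain \eqref{Eq:OldG} alone only gives $\gamma_3(G)\le R$, and $[R,G]$ need not vanish, so $\cl G\le 3$ is not a consequence of the subgroup data. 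The correct derivation uses \eqref{Eq:D}: from $z^{-xy}z^{yx}=\delta([z,x],y)\delta([z,y],x)^{-1}\in Z$ one gets $[G,G']\le Z\le Z(G)$, hence $\cl{G/Z}\le 2$ and $\cl G\le 3$ (this is exactly Lemma \ref{Lm:fgh}(i), and the same computation appears in \cite{DV1}).

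Second, and more importantly, the heart of the proposition --- that $\inn Q$ is abelian and that $\cl Q=3$ exactly when \eqref{Eq:Nontrivial} holds --- is not actually proved in your text; you yourself defer ``the bookkeeping of the second paragraph'' as the main obstacle, but that bookkeeping \emph{is} the theorem (it occupies \S\S 4--7 of \cite{DV1}, which is why the present paper simply cites it). Your structural picture is also off in places: the generators $L_{x,y}$, $R_{x,y}$ of $\inn Q$ have no conjugation part at all (they are translations by $Z$-valued functions of the argument), while $T_x(y)=y^x\mu(y,x)\mu(x,y^x)^{-1}$ (cf.\ Lemma \ref{Lm:T2}, quoting \cite[Lemma 4.4]{DV1}); commutativity of two $T$'s is indeed encoded by \eqref{Eq:D}, but commutation with the $L_{x,y}$, $R_{x,y}$ requires \eqref{Eq:Mu1}--\eqref{Eq:Mu2} and does not follow merely from ``the twists lie in $Z$'', since the twists are functions that must be shown to transform correctly under the other mapping. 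Finally, for the last equivalence, central nilpotency class $\le 2$ for a \emph{loop} is not captured by the vanishing of iterated commutators $[[x,y],z]$ alone; associators enter the second center as well, so the reduction of $\cl Q\le 2$ to $g\equiv 1$ needs the explicit associator/commutator computations of \cite{DV1}, not just the Hall--Witt identity in $G$.
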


The condition \eqref{Eq:Nontrivial} can be satisfied even if $\cl{G}=2$, and we
managed to obtain in \cite{DV1} many examples of loops $Q$ of order $128$ with
$\cl{Q}=3$ and $\cl{\inn Q}=1$ from groups of nilpotency class two.

Unlike the original setup, the new setup will be based only on the groups $G$,
$Z\le Z(G)$, and the mapping $\delta$.

Let $G$ be a group and
\begin{equation}\label{Eq:NewG}
    Z\le Z(G).\tag{$B_0$}
\end{equation}
Assume that $\delta:G/Z\times G/Z\to Z$ satisfies
\begin{align}
    &\delta(x,x) = 1\text{ for every $x\in G$,}\label{Eq:D0}\tag{$B_1$}\\
    &\delta(x,y) = \delta(y,x)^{-1}\text{ for every $x$, $y\in G$,}\label{Eq:D1}\tag{$B_2$}\\
    &\delta(xy,z) = \delta(x,z)\delta(y,z)\text{ whenever $\{x$, $y$, $z\}\cap G'\ne\emptyset$,}\label{Eq:D2}\tag{$B_3$}\\
    &z^{yx}\delta([z,y],x) = z^{xy}\delta([z,x],y)\text{ for every $x$, $y$, $z\in G$.}\label{Eq:D3}\tag{$B_4$}
\end{align}
The \emph{new setup} consists of $G$, $Z$ and $\delta$ as above, satisfying the
conditions \eqref{Eq:NewG}--\eqref{Eq:D3}.

\begin{proposition}\label{Pr:OriginalToNew}
Every original setup gives rise to a new setup.
\end{proposition}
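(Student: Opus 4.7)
The plan is to take the data $(G, Z, R, N, \mu, \delta)$ of an original setup and forget $R$, $N$, and $\mu$, keeping only $G$, $Z$, and $\delta$. I then need to verify that this triple is a new setup, which means checking four things: that the domain makes sense, and that $(B_1)$--$(B_4)$ hold.

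The domain check is the first step. The original $\delta$ is defined on $G/R\times G/R$, but since $Z\le R$ by \eqref{Eq:OldG}, the equivalence on $G/Z$ is finer than that on $G/R$, so $\delta$ descends to a well-defined map $G/Z\times G/Z\to Z$. Next, $(B_1)$ is immediate from $\delta(x,x)=\mu(x,x)\mu(x,x)\m=1$; $(B_2)$ is immediate from \eqref{Eq:Mu0}; and $(B_4)$ is nothing but \eqref{Eq:D} itself.

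The only nontrivial step is $(B_3)$. Since $G'\le N$ by \eqref{Eq:OldG}, the hypothesis $\{x,y,z\}\cap G'\ne\emptyset$ implies $\{x,y,z\}\cap N\ne\emptyset$, which is exactly the hypothesis needed to apply \eqref{Eq:Mu1} and \eqref{Eq:Mu2}. I would handle the three cases ($x$, $y$, or $z$ in $G'$) uniformly: in each case, \eqref{Eq:Mu1} gives $\mu(xy,z)=\mu(x,z)\mu(y,z)$ and \eqref{Eq:Mu2} gives $\mu(z,xy)=\mu(z,x)\mu(z,y)$ (the element that lies in $N$ serves as the ``distinguished'' argument in both identities). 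Substituting into $\delta(xy,z)=\mu(xy,z)\mu(z,xy)\m$ and using that $Z\le Z(G)$ to commute the four $\mu$-factors, one obtains $\mu(x,z)\mu(z,x)\m\mu(y,z)\mu(z,y)\m=\delta(x,z)\delta(y,z)$, as required.

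No step is really an obstacle here; the only thing to be careful about is that both \eqref{Eq:Mu1} and \eqref{Eq:Mu2} are needed (for the two factors of $\delta$), and that centrality of $Z$ is invoked at the end to rearrange the product. With these observations the verification is routine.
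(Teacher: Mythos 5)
Your proof is correct and follows essentially the same route as the paper's: $(B_1)$, $(B_2)$ come from $(A_1)$, $(B_4)$ is literally $(A_4)$, and $(B_3)$ is obtained by combining $(A_2)$ and $(A_3)$ via $G'\le N$, with well-definedness modulo $Z$ following from $Z\le R$. No issues.
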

\begin{proof}
Let $G$, $Z$, $R$, $N$, $\mu$ and $\delta$ be as in the original setup. We
certainly have \eqref{Eq:NewG}. The mapping $\delta$ is defined modulo $R$ and
hence modulo $Z\le R$. Conditions \eqref{Eq:D0} and \eqref{Eq:D1} follow from
\eqref{Eq:Mu0}. If $\{x,y,z\}\cap G'\ne\emptyset$, we have $\{x,y,z\}\cap
N\ne\emptyset$, and so $\delta(xy,z) =
\mu(xy,z)\mu(z,xy)^{-1}=\mu(x,z)\mu(y,z)\mu(z,x)^{-1}\mu(z,y)^{-1} =
\delta(x,z)\delta(y,z)$, by \eqref{Eq:Mu1} and \eqref{Eq:Mu2}. Finally, the
conditions \eqref{Eq:D} and \eqref{Eq:D3} are identical.
\end{proof}

To show that every new setup gives rise to an original setup, we must find
suitable subgroups $R$ and $N$ and a suitable mapping $\mu$. This is
accomplished in the next three sections, culminating in Proposition
\ref{Pr:New}.

\section{The radical and the multiplicative part}\label{Sc:RadMul}

\subsection{The radical}

Let $(G,\cdot,1)$ be a group and $(H,\cdot,1)$ an abelian group. Let us define
the radical $\rad\f$ for a general mapping $\f:G^2\to H$ satisfying
\begin{equation}\label{Eq:Cocycle}
    \f(1,x)=\f(x,1)=1\text{ for every $x\in G$.}
\end{equation}
Let
\begin{align*}
    &\rad_2 \f = \{t\in G;\;\f(t,x) = \f(x,t) = 1 \text{ for all }x\in G\},\\
    &\rad_1 \f = \{t\in G;\;\f(tx,y)=\f(xt,y)
        =\f(x,ty)=\f(x,yt)=\f(x,y)\text{ for all }x,\,y\in G\}.
\end{align*}
It turns out (cf. Lemma \ref{Lm:Radicals}(ii)) that $\rad_1\f$ is a subgroup of
$G$. We can therefore define
\begin{displaymath}
    \rad \f = \core{G}{(\rad_1 \f)},
\end{displaymath}
where $\core{G}{A}$ is the largest normal subgroup of $G$ contained in $A$,
that is,
\begin{displaymath}
    \core{G}{A} = \bigcap_{g\in G} A^g = \{t\in G;\;t^g\in A\text{ for every }
    g\in G\}.
\end{displaymath}

Call a mapping $\f:G^m\to H$ \emph{multiplicative} if for every $1\le i\le m$
and for every $x_1$, $\dots$, $x_m\in G$ the induced mapping
$\f(x_1,\dots,x_{i-1},-,x_{i+1},\dots,x_m):G\to H$ is a homomorphism. A mapping
$\f:G^m\to H$ is \emph{symmetric} if
\begin{displaymath}
    \f(x_1,\dots,x_m) = \f(x_{\sigma(1)},\dots,x_{\sigma(m)}),
\end{displaymath}
and \emph{alternating} if
\begin{displaymath}
    \f(x_1,\dots,x_m) =
    \f(x_{\sigma(1)},\dots,x_{\sigma(m)})^{\mathrm{sgn}(\sigma)},
\end{displaymath}
for every $x_1$, $\dots$, $x_m\in G$ and every permutation $\sigma$ of
$\{1,\dots,m\}$.

\begin{lemma}\label{Lm:Radicals}
Let $G$ be a group, $H$ an abelian group, and $\f:G^2\to H$ a mapping
satisfying \eqref{Eq:Cocycle}. Then
\begin{enumerate}
\item[(i)] $\rad_1 \f\subseteq \rad_2 \f$,
\item[(ii)] $\rad_1 \f\le G$, $\rad \f = \{t\in G;\; t^z\in \rad_1 \f$ for
    every $z\in G\} \unlhd G$,
\item[(iii)] if $\f$ is multiplicative then $G'\le \rad\f = \rad_1 \f
    =\rad_2 \f$.
\end{enumerate}
\end{lemma}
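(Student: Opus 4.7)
The plan is to handle the three parts in turn, each following from direct manipulation of the defining conditions together with the normalizing assumption $\varphi(1,x)=\varphi(x,1)=1$.

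For (i), I would take $t\in\rad_1\varphi$ and specialize the four defining equalities. Setting $x=1$ in $\varphi(tx,y)=\varphi(x,y)$ gives $\varphi(t,y)=\varphi(1,y)=1$ for every $y$, and setting $y=1$ in $\varphi(x,ty)=\varphi(x,y)$ gives $\varphi(x,t)=1$ for every $x$. Thus $t\in\rad_2\varphi$.

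For (ii), I would first verify that $\rad_1\varphi$ is closed under multiplication and inverses. If $s,t\in\rad_1\varphi$, then iterating gives $\varphi(stx,y)=\varphi(tx,y)=\varphi(x,y)$ and analogously in the remaining three slots, so $st\in\rad_1\varphi$. For inverses, apply the identity $\varphi(tu,y)=\varphi(u,y)$ with $u=t^{-1}x$ to obtain $\varphi(x,y)=\varphi(t^{-1}x,y)$, and similarly for the other three equalities; this gives $t^{-1}\in\rad_1\varphi$. Hence $\rad_1\varphi\le G$. The description of $\rad\varphi$ as $\{t:t^g\in\rad_1\varphi\text{ for every }g\}$ is then simply the general identity $\core{G}{A}=\bigcap_g A^g=\{t:t^g\in A\text{ for every }g\in G\}$ for $A\le G$, and the core is automatically normal in $G$.

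For (iii), the crucial added ingredient is multiplicativity of $\varphi$. It yields the reverse inclusion $\rad_2\varphi\subseteq\rad_1\varphi$ since, e.g., $\varphi(tx,y)=\varphi(t,y)\varphi(x,y)=\varphi(x,y)$ whenever $\varphi(t,y)=1$, and likewise in the other three slots. Combined with (i) this gives $\rad_1\varphi=\rad_2\varphi$. To see $G'\le\rad\varphi$, I would observe that multiplicativity together with $\varphi(1,y)=1$ forces $\varphi(a^{-1},y)=\varphi(a,y)^{-1}$, so using that $H$ is abelian,
\begin{displaymath}
\varphi([a,b],y)=\varphi(a,y)\varphi(b,y)\varphi(a,y)^{-1}\varphi(b,y)^{-1}=1,
\end{displaymath}
and symmetrically $\varphi(y,[a,b])=1$. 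Hence every commutator lies in $\rad_2\varphi=\rad_1\varphi$, so $G'\le\rad_1\varphi$. Since $\rad_1\varphi$ now contains $G'$, it is automatically normal: for $t\in\rad_1\varphi$ and $g\in G$, $t^g=t[t,g]\in\rad_1\varphi$ as a product of two elements of this subgroup. Therefore $\rad_1\varphi\subseteq\rad\varphi$, and equality follows.

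There is no real obstacle in this lemma; the only point requiring mild care is the bookkeeping in (ii) to show that inverses land back in $\rad_1\varphi$, and in (iii) the observation that $G'\le\rad_1\varphi$ already forces $\rad_1\varphi$ to be normal, which is what identifies it with $\rad\varphi$.
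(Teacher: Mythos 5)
Your proof is correct and, in parts (i) and (ii), identical in substance to the paper's argument (the same specializations $x=1$, $y=1$ for (i), and the same $\f(t t^{-1}x,y)$ trick for inverses in (ii)). The only variation is in (iii): the paper proves normality by directly computing $\f(t^z,x)=\f(z^{-1},x)\f(t,x)\f(z,x)=1$ for $t\in\rad_2\f$ and only afterwards deduces $G'\le\rad\f$, whereas you first establish $G'\le\rad_1\f$ via the commutator computation and then obtain normality for free from the standard fact that a subgroup containing the derived subgroup is normal; both routes are valid and of comparable length.
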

\begin{proof}
Assume that $t\in\rad_1\f$. Then $\f(t,x)=\f(t\cdot 1,x) = \f(1,x) = 1$ for
every $x\in G$, and, similarly, $\f(x,t)=1$. Hence $t\in\rad_2\f$.

To establish (ii), assume that $t$, $s\in\rad_1\f$. Then $\f(tsx,y) = \f(sx,y)
= \f(x,y)$, and similarly for all other defining conditions of $\rad_1\f$, so
$ts\in\rad_1\f$. We have $\f(t^{-1}x,y) = \f(tt^{-1}x,y) = \f(x,y)$, where the
first equality follows from $t\in\rad_1\f$. Hence $t^{-1}\in\rad_1\f$. The rest
of (ii) is clear.

Assume that $\f$ is multiplicative and let $t\in\rad_2\f$. Then $\f(tx,y) =
\f(t,y)\f(x,y) = \f(x,y)$, and similarly for the other defining conditions of
$\rad_1\f$, so $\rad_1\f=\rad_2\f$. To show that $\rad\f = \rad_1\f$, it
therefore suffices to prove that $\rad_2\f\unlhd G$. But for $t\in \rad_2\f$
and $x$, $z\in G$, we have
\begin{multline*}
    \f(t^z,x) = \f(z^{-1},x)\f(t,x)\f(z,x) = \f(z^{-1},x)\f(z,x)\f(t,x)\\
    =\f(z^{-1}z,x)\f(t,x) = \f(1,x)\f(t,x) = \f(t,x) = 1.
\end{multline*}
A similar argument shows $\f(x,t^z) = 1$, hence $\rad\f = \rad_1\f = \rad_2\f$.
Now, $\f([x,y],z) = \f(x^{-1},z)\f(y^{-1},z)\f(x,z)\f(y,z)=1$, so $G'\le
\rad\f$.
\end{proof}

We remark that it is not difficult to construct $\f:G^2\to H$ such that
$\rad_2\f$ is not a subgroup of $G$, or such that $\rad_1\f$ is not a normal
subgroup of $G$. It is therefore necessary to transition from $\rad_1\f$ to its
core $\rad\f$ if we wish to consider the mapping $\f$ modulo its radical.

More precisely, $\f:G^2\to H$ satisfying \eqref{Eq:Cocycle} induces a mapping
$\f:(G/\rad\f)^2\to H$ with trivial radical, and we will often identity this
induced mapping with $\f$.

Since we will also need the radical for multiplicative mappings $\f:G^3\to H$,
we set
\begin{equation}\label{Eq:Rad3Mult}
    \rad\f=\{t\in G;\;\f(t,x,y)=\f(x,t,y)=\f(x,y,t)=1\text{ for every }x,\,y\in
    G\}
\end{equation}
in such a case. It is then easy to see that $G'\le\rad\f\unlhd G$ and that $\f$
is well-defined modulo $\rad\f$ once again.

\subsection{The multiplicative part}

For a group $G$, an abelian group $H$ and a mapping $\f:G^2\to H$ satisfying
\eqref{Eq:Cocycle} define the \emph{multiplicative part} $\mul\f$ of $\f$ by
\begin{align*}
    \mul \f = \{t\in G;\;&\f(tx,y)=\f(t,y)\f(x,y)=\f(xt,y),\\
        &\f(x,ty) = \f(x,t)\f(x,y)=\f(x,yt),\\
        &\f(xy,t)=\f(x,t)\f(y,t),\\
        &\f(t,xy)=\f(t,x)\f(t,y)\text{ for every $x$, $y\in G$}\}.
\end{align*}

\begin{lemma}\label{Lm:Mul}
Let $G$ be a group, $H$ an abelian group, and $\f:G^2\to H$ a mapping
satisfying \eqref{Eq:Cocycle}. Then
\begin{enumerate}
\item[(i)] $\mul\f\le G$,
\item[(ii)] $\rad\f\le \rad_1\f\le\mul\f$,
\item[(iii)] if $\f$ is multiplicative then $\mul\f=G$.
\end{enumerate}
\end{lemma}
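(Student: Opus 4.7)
The plan is to dispose of (iii) immediately, handle (ii) by observing that elements of $\rad_1\f$ are absorbed as trivial factors, and then do the real work in (i), where closure under products and inverses has to be verified against all four rows of conditions defining $\mul\f$.

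For (iii), if $\f$ is multiplicative then by definition $\f(-,y)$ and $\f(x,-)$ are homomorphisms for every fixed argument, so every $t\in G$ satisfies all four rows of the definition of $\mul\f$; hence $\mul\f=G$. For (ii), $\rad\f\le\rad_1\f$ is immediate from the definition of the core. For $\rad_1\f\le\mul\f$, I take $t\in\rad_1\f$ and use Lemma \ref{Lm:Radicals}(i), which gives $\f(t,y)=\f(y,t)=1$ for every $y$; the defining identities then reduce on both sides to $\f(x,y)$ in the first two rows and to $1$ in the last two rows, so all of them hold trivially.

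For (i) the nontrivial content is closure under product and inverse. For the product, if $s,t\in\mul\f$, I first note that instantiating the defining identity with $x=1$ yields $\f(st,y)=\f(s,y)\f(t,y)$ (and similarly on the right), and then I chain the identities: for instance, applying $s\in\mul\f$ and then $t\in\mul\f$,
\begin{displaymath}
\f(stx,y)=\f(s,y)\f(tx,y)=\f(s,y)\f(t,y)\f(x,y)=\f(st,y)\f(x,y),
\end{displaymath}
with the analogous chain for $\f(xst,y)$ and for the second, third and fourth rows of the definition. For the inverse, from $1=\f(1,y)=\f(t t\m,y)=\f(t,y)\f(t\m,y)$ (which is obtained by applying $t\in\mul\f$ with $x=t\m$) I deduce $\f(t\m,y)=\f(t,y)\m$, and then
\begin{displaymath}
\f(t,y)\f(t\m x,y)=\f(tt\m x,y)=\f(x,y),
\end{displaymath}
so $\f(t\m x,y)=\f(t\m,y)\f(x,y)$; the remaining seven identities are obtained by the same cancellation trick, using $H$ abelian where needed.

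I expect no real obstacle: the only step that requires a moment of thought is verifying $\f(t\m,y)=\f(t,y)\m$ before one can use it, which must be extracted from the $\f(tx,y)=\f(t,y)\f(x,y)$ identity rather than postulated. Once this is in hand, each of the eight inverse identities is one line, and (i)--(iii) all fit cleanly together.
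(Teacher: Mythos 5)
Your proof is correct and follows essentially the same route as the paper: part (iii) is immediate, part (ii) reduces the defining identities using $\rad_1\f\subseteq\rad_2\f$ from Lemma \ref{Lm:Radicals}(i), and part (i) is the same direct verification, chaining the defining identities for products and extracting $\f(t\m,y)=\f(t,y)\m$ (and its mirror versions) before handling inverses. No gaps; the only cosmetic point is that the number of inverse identities to check is six rather than eight, which does not affect the argument.
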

\begin{proof}
To show (i), assume that $t$, $s\in\mul\f$. Then $\f(tsx,y) = \f(t,y)\f(sx,y) =
\f(t,y)\f(s,y)\f(x,y) = \f(ts,y)\f(x,y)$, and the conditions $\f(ts,y)\f(x,y) =
\f(xts,y)$, $\f(x,tsy) = \f(x,ts)\f(x,y)\f(x,yts)$ are established similarly.
We also have $\f(xy,ts) = \f(xy,t)\f(xy,s) = \f(x,t)\f(y,t)\f(x,s)\f(y,s) =
\f(x,ts)\f(y,ts)$, hence $ts\in\mul\f$. Now, $\f(t^{-1},y)\f(tx,y) =
\f(t^{-1},y)\f(t,y)\f(x,y) = \f(t^{-1}t,y)\f(x,y) = \f(1,y)\f(x,y) = \f(x,y) =
\f(t^{-1}tx,y)$ for every $x$, $y\in G$, and hence $\f(t^{-1}x,y) =
\f(t^{-1},y)\f(x,y)$ for every $x$, $y\in G$. The rest of (i) is similar.

To prove (ii), let $t\in\rad_1\f$. Then $t\in\rad_2\f$ by Lemma
\ref{Lm:Radicals}, and so $\f(tx,y) = \f(x,y) = \f(t,y)\f(x,y)$ for every $x$,
$y\in G$. The conditions $\f(xt,y)=\f(t,y)\f(x,y)$ and $\f(x,ty) =
\f(x,t)\f(x,y)=\f(x,yt)$ follow by a similar argument. The final two conditions
$\f(xy,t)=\f(x,t)\f(y,t)$, $\f(t,xy)=\f(t,x)\f(t,y)$ hold trivially, as
$t\in\rad_2\f$.

Part (iii) is obvious.
\end{proof}

\begin{lemma}\label{Lm:RadMul}
Let $G$ be a group, $H$ an abelian group, and $\f:G^2\to H$ a mapping
satisfying \eqref{Eq:Cocycle}. Assume that $G'\le\mul\f$. Then
\begin{enumerate}
\item[(i)] $\mul\f\unlhd G$,
\item[(ii)] $\mul\f/\rad\f \le Z(G/\rad\f)$.
\end{enumerate}
\end{lemma}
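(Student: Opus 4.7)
The plan is to prove (i) first and then feed it into (ii). Part (i) follows immediately from the commutator identity $t^g = t\cdot [t,g]$: for $t\in\mul\f$ and $g\in G$, the commutator $[t,g]$ lies in $G'\le\mul\f$ by hypothesis, and $\mul\f$ is a subgroup by Lemma \ref{Lm:Mul}(i), so $t^g\in\mul\f$.

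For (ii) the goal is to show $[t,g]\in\rad\f$ whenever $t\in\mul\f$ and $g\in G$. The key observation exploits the commutativity of $H$: since $t\in\mul\f$, we have $\f(tg,y)=\f(t,y)\f(g,y)=\f(g,y)\f(t,y)=\f(gt,y)$. On the other hand, from the identity $tg = gt\cdot[t,g]$ together with $[t,g]\in G'\le\mul\f$, one may split $\f(gt\cdot[t,g],y)=\f(gt,y)\cdot\f([t,g],y)$. Comparing the two expressions forces $\f([t,g],y)=1$, and the symmetric argument in the second coordinate gives $\f(y,[t,g])=1$, so $[t,g]\in\rad_2\f$. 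Combined with $[t,g]\in\mul\f$, the defining equalities of $\rad_1\f$ reduce to the identities just verified, so $[t,g]\in\rad_1\f$.

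To promote this to $[t,g]\in\rad\f=\core{G}{(\rad_1\f)}$, I need every conjugate $[t,g]^h$ to lie in $\rad_1\f$. But $[t,g]^h=[t^h,g^h]$, and part (i) guarantees $t^h\in\mul\f$, so the preceding argument applies verbatim to the pair $(t^h,g^h)$. Hence $[t,g]\in\rad\f$, which is exactly the assertion $\mul\f/\rad\f\le Z(G/\rad\f)$ (noting that $\rad\f\le\mul\f$ by Lemma \ref{Lm:Mul}(ii) so the quotient makes sense). The only genuinely nontrivial step is the commutativity trick equating $\f(tg,y)$ with $\f(gt,y)$; the rest is bookkeeping with Lemma \ref{Lm:Mul} and the normality of $\mul\f$ established in (i).
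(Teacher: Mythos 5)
Your proof is correct and follows essentially the same route as the paper: in (ii) you show $\f([t,g],y)=\f(y,[t,g])=1$ by splitting off the commutator (the paper does the same via $tx=xt^x$ and $\f(t^x,y)=\f(t,y)$), and you handle the core condition exactly as the paper does, via $[t,g]^h=[t^h,g^h]$ and part (i). Your part (i) is in fact a slight streamlining of the paper's argument: instead of re-verifying the defining identities of $\mul\f$ for $t^g=[g,t^{-1}]t$ as the paper does, you simply invoke closure of the subgroup $\mul\f$ (Lemma \ref{Lm:Mul}(i)) applied to $t\cdot[t,g]$ with $[t,g]\in G'\le\mul\f$, which is perfectly valid.
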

\begin{proof}
Let $t\in \mul\f$, $z\in G$. For (i), we want to show that $t^z\in\mul\f$. We
have, $\f(t^zx,y) = \f([z,t^{-1}]tx,y) = \f([z,t^{-1}],y)\f(t,y)\f(x,y) =
\f([z,t^{-1}]t,y)\f(x,y) = \f(t^z,y)\f(x,y)$. The remaining defining conditions
of $\mul \f$ are verified analogously.

To prove (ii) it suffices to show that $[m,u]\in\rad\f$ for every $m\in\mul\f$,
$u\in G$. By Lemma \ref{Lm:Radicals}, it suffices to show that
$[m,u]^z\in\rad_1\f$. We prove $\f([m,u]^zx,y) = \f(x,y)$, and leave the rest
to the reader. First, $\f([m,u]^zx,y) = \f([m^z,u^z]x,y) =
\f([m^z,u^z],y)\f(x,y)$. Second, for every $t\in\mul\f$ we have $\f(t,y)\f(x,y)
= \f(tx,y) = \f(xt^x,y) = \f(x,y)\f(t^x,y)$ by (i), and so $\f(t^x,y) =
\f(t,y)$, $\f([t,x],y) = \f(t^{-1}t^x,y) = \f(t^{-1},y)\f(t^x,y) =
\f(t^{-1},y)\f(t,y) = \f(t^{-1}t,y) = 1$. Finally, as $m^z=t\in\mul\f$, we get
$\f([m^z,u^z],y) = 1$.
\end{proof}

\section{Three multiplicative mappings}\label{Sc:Three}

Suppose that $G$, $Z$ and $\delta$ form a new setup. Since $\delta$ is defined
modulo $Z\unlhd G$, we see that $Z\le\rad\delta$ when $\delta$ is considered as
a mapping $G\times G\to Z$. The conditions \eqref{Eq:D1}, \eqref{Eq:D2} imply
that $G'\le\mul\delta$. We will use these properties of $\rad\delta$ and
$\mul\delta$ throughout.

Define $f$, $g$, $h:G^3\to Z$ by
\begin{align}
    f(x,y,z) &= \delta([x,y],z),\notag\\
    g(x,y,z) &= f(x,y,z)f(y,z,x)f(z,x,y),\label{Eq:fgh}\\
    h(x,y,z) &= [x,[y,z]],\notag
\end{align}
where $x$, $y$, $z\in G$. (See Lemma \ref{Lm:fgh}(iv) for $\im h\le Z$.) These
three mappings and their radicals play a crucial role in the analysis of the
new setup. Note that all three mappings are well-defined modulo $\rad\delta$.

\begin{lemma}\label{Lm:fgh}
In the new setup $G$, $Z$, $\delta$ the following conditions are satisfied:
\begin{enumerate}
\item[(i)] $z^{-xy}z^{yx} = [z,[y^{-1},x^{-1}]]$, $[G,G']\le Z$,
    $\cl{G/Z}\le 2$, $\cl{G}\le 3$, and $f(z,x,y) =
    h(z,y,x)f(z,y,x)$,
\item[(ii)] $f$ is multiplicative, $\mul\delta\le\rad f$, and
    $f(x,y,z)=f(y,x,z)^{-1}$,
\item[(iii)] $g$ is multiplicative, $\rad f\le\rad g$, and $g(x,y,z) =
    g(y,z,x)=g(y,x,z)^{-1}$,
\item[(iv)] $h$ is multiplicative, $\im h\le Z$, $\rad f\le\rad h$, and
    $h(x,y,z) = h(x,z,y)^{-1}$,
\item[(v)] $G/\rad g$ is an elementary abelian $2$-group, and the mapping
    $g:G^3\to Z$ induces a trilinear alternating mapping $(G/\rad g)^3\to
    \{a\in Z;\;a^2=1\}$.
\end{enumerate}
\end{lemma}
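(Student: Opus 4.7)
The strategy is to derive (i) first, since it yields the class bound $\cl{G}\le 3$ and the key identity $f(z,x,y) = h(z,y,x)f(z,y,x)$ linking $f$ and $h$; the other parts all lean on this. Starting from \eqref{Eq:D3}, rewrite it as $z^{-xy}z^{yx} = \delta([z,x],y)\delta([z,y],x)^{-1}$, so the left-hand side lies in $Z$. A direct computation (or a standard commutator identity) shows $z^{-xy}z^{yx} = [z^{xy},[x,y]] = [z,[y^{-1},x^{-1}]]$. As $x,y$ range over $G$, every single commutator in $G'$ appears as some $[y^{-1},x^{-1}]$, so $[z,[a,b]]\in Z$ for all $a,b,z$; an induction over products of commutators then gives $[G,G']\le Z$, hence $\cl{G/Z}\le 2$ and, since $Z\le Z(G)$, also $\cl{G}\le 3$. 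In class $3$, $[z,c]$ depends on $c\in G'$ only modulo $[G',G]\le Z$, so $[z,[y^{-1},x^{-1}]] = [z,[y,x]] = h(z,y,x)$, and rearranging \eqref{Eq:D3} yields $f(z,x,y) = h(z,y,x)f(z,y,x)$.

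The multiplicativity of $f$ in (ii) follows from \eqref{Eq:D2}: in the third slot directly, and in the first two slots after noting that $[xx',y]$ differs from $[x,y][x',y]$ by an element of $[G',G]\le Z\le\rad\delta$. Antisymmetry $f(y,x,z) = f(x,y,z)^{-1}$ is immediate from $[y,x] = [x,y]^{-1}\in G'$ and \eqref{Eq:D2}. For $\mul\delta\le\rad f$, Lemma \ref{Lm:RadMul}(ii) gives $[\mul\delta,G]\le\rad\delta$, which kills the first two slots of $f$, while the third slot follows because $t\in\mul\delta$ makes $\delta(-,t)$ a homomorphism, so $\delta([x,y],t)$ expands and collapses to $1$. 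Part (iii) is then routine: $g$ is multiplicative as a product of multiplicative $f$'s, $\rad f\le\rad g$ is immediate from the defining sum, cyclic symmetry is built in, and $g(y,x,z) = g(x,y,z)^{-1}$ follows by pairing each factor of $g(x,y,z)g(y,x,z)$ with its antisymmetric partner, using commutativity of the value group.

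Part (iv) uses $\im h\le Z$ (from $[G,G']\le Z$) to bring $h$ into our setup; multiplicativity follows from the standard class-$3$ commutator identities $[xx',c] = [x,c][x',c]$ and $[a,cc'] = [a,c][a,c']$ for $c,c'\in G'$. The inclusion $\rad f\le\rad h$ is read directly off the identity $h(z,y,x) = f(z,x,y)f(z,y,x)^{-1}$ produced in (i), and the antisymmetry $h(x,y,z) = h(x,z,y)^{-1}$ comes from $[x,c^{-1}] = [x,c]^{-1}$ for $c\in G'$ (again class $3$).

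The real work is in (v). From the chain $G'\le\mul\delta\le\rad f\le\rad g$ we get $G/\rad g$ abelian. A short computation gives $g(x,x,y) = f(x,x,y)f(x,y,x)f(y,x,x) = 1$ (the first factor vanishes because $[x,x]=1$, the other two cancel by antisymmetry of $f$), and expanding $g(xy,xy,z)=1$ by multilinearity yields antisymmetry of $g$ in its first two slots; combined with cyclic symmetry, $g$ is fully alternating. For $2$-torsion, apply (i) to each summand of $g(x,z,y)$ to rewrite it as $g(x,y,z)\cdot[h(x,z,y)h(z,y,x)h(y,x,z)]^{-1}$; the Hall-Witt identity, which in a class-$3$ group collapses to the Jacobi identity $h(x,y,z)h(y,z,x)h(z,x,y) = 1$ (and hence $h(x,z,y)h(z,y,x)h(y,x,z) = 1$ after relabeling), makes the $h$-product trivial, so $g(x,z,y) = g(x,y,z)$. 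But alternation gives $g(x,z,y) = g(x,y,z)^{-1}$, whence $g^2 = 1$. Consequently $g(x^2,y,z) = g(x,y,z)^2 = 1$ and $x^2\in\rad g$, so $G/\rad g$ is elementary abelian; the induced map is $\gf{2}$-trilinear on $G/\rad g$, alternating, and lands in the $2$-torsion of $Z$. The main obstacle I expect is this final step: getting the cyclic rotation of $h$-factors exactly right when rewriting $g(x,z,y)$ through (i), and invoking Hall-Witt with enough care that the ambient conjugations may be dropped because of class $3$.
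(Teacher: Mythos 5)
Your proposal is correct and follows essentially the same route as the paper: derive (i) from \eqref{Eq:D3} together with the commutator identity and centrality (note your intermediate step should read $z^{-xy}z^{yx}=[z^{xy},[y,x]]=[z,[y^{-1},x^{-1}]]^{xy}$, the conjugation being dropped only because \eqref{Eq:D3} places the element in $Z\le Z(G)$), then get multiplicativity and the radical inclusions from \eqref{Eq:D2} and Lemma \ref{Lm:RadMul}, and finally prove $g^2=1$ in (v) by swapping two arguments via the identity of (i) and killing the resulting product of $h$-values with the Hall--Witt (Jacobi) identity in class $3$. These are exactly the paper's steps, up to minor reorderings (e.g.\ proving alternation before $g^2=1$).
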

\begin{proof}
(i) By \eqref{Eq:D3}, $z^{-xy}z^{yx} = f(z,x,y)f(z,y,x)^{-1}\in Z\le Z(G)$.
Since $z^{-xy}z^{yx} = [z,[y^{-1},x^{-1}]]^{xy}$ holds in any group, we have
$[z,[y^{-1},x^{-1}]] = f(z,x,y)f(z,y,x)^{-1}\in Z\le Z(G)$. Thus $[G,G']\le Z$,
$\cl{G/Z}\le 2$, and $\cl{G}\le 3$. In any group of nilpotency class three we
have $[z,[y^{-1},x^{-1}]] = [z,[y,x]]$, so $h(z,y,x) = [z,[y,x]] =
[z,[y^{-1},x^{-1}]] = f(z,x,y)f(z,y,x)^{-1}$.

(ii) $f$ is well-defined modulo $\rad\delta$ and $\cl{G/\rad\delta}\le 2$ by
(i). Since $[xy,z]=[x,z][y,z]$ in any group of nilpotency class $2$, we have
$f(xy,z,w) = \delta([xy,z],w) = \delta([x,z][y,z],w) =
\delta([x,z],w)\delta([y,z],w) = f(x,z,w)f(y,z,w)$. Note that $f(x,yz,w) =
f(x,y,w)f(x,z,w)$ follows similarly. Finally, using $G'\le\mul\delta$, we have
$f(x,y,zw) = \delta([x,y],zw) = \delta([x,y],z)\delta([x,y],w) =
f(x,y,z)f(x,y,w)$.

We have shown that $f$ is multiplicative, so $\rad f$ is defined by \eqref{Eq:Rad3Mult}. Let
$t\in\mul\delta$. Then
\begin{multline*}
    f(x,y,t)=\delta([x,y],t) =
    \delta(x^{-1},t)\delta(y^{-1},t)\delta(x,t)\delta(y,t) =\\
    \delta(x^{-1},t)\delta(x,t)\delta(y^{-1},t)\delta(y,t) =
    \delta(x^{-1}x,t)\delta(y^{-1}y,t)=1.
\end{multline*}
Moreover, by Lemma \ref{Lm:RadMul}(ii), $[t,x]\in\rad\delta$, and we have
$f(t,x,y) = \delta([t,x],y)=1$. The equality $f(x,t,y)=1$ follows similarly.
Thus $\mul\delta\le\rad f$.

Finally, by multiplicativity of $f$ and by $\delta([x,x],y)=1$, we have $1=
f(xy,xy,z) = f(x,y,z)f(y,x,z)$.

(iii) The mapping $g$ is multiplicative since $f$ is, and it is clearly
invariant under a cyclic shift of its arguments. If $t\in\rad f$, we have
$g(t,x,y)=f(t,x,y)f(x,y,t)f(y,t,x)=1$ for every $x$, $y\in G$, so $t\in\rad g$.
By (ii), $g(x,y,z) = f(x,y,z)f(y,z,x)f(z,x,y) =
f(y,x,z)^{-1}f(z,y,x)^{-1}f(x,z,y)^{-1} = (f(y,x,z)f(x,z,y)f(z,y,x))^{-1} =
g(y,x,z)^{-1}$.

(iv) By (i) and (ii), $h$ is multiplicative and $\im h\le Z$. Assume that
$t\in\rad f$. Then $h(t,x,y) = f(t,y,x)f(t,x,y)^{-1}=1$ and, similarly,
$h(x,t,y) = h(x,y,t)=1$, so $t\in\rad h$. Since $1=[y,[x,x]]=h(y,x,x)$, we have
$h(x,y,z)=h(x,z,y)^{-1}$ by multiplicativity.

(v) Let us first show that $g(x,y,z)^2=1$ for every $x$, $y$, $z\in G$. In view
of (iii), this is equivalent to $g(x,y,z) = g(y,x,z)$. Now,
\begin{align*}
    g(y,x,z) &= f(y,x,z)f(x,z,y)f(z,y,x)\\
        &=   f(y,z,x)[y,[z,x]]f(x,y,z)[x,[y,z]]f(z,x,y)[z,[x,y]]\\
        &=   g(x,y,z)[x,[y,z]][y,[z,x]][z,[x,y]],
\end{align*}
by (iv). The Hall-Witt identity
\begin{displaymath}
    [[x,y^{-1}],z]^y[[y,z^{-1}],x]^z[[z,x^{-1}],y]^x=1
\end{displaymath}
valid in all groups yields $[x,[y,z]][y,[z,x]][z,[x,y]]=1$ in groups of
nilpotency class $3$, so we are done by (i).

The established identity $g(x,y,z)^2=1$ shows not only that the image of $g$ is
contained in the vector space $U=\{a\in Z;\;a^2=1\}$, but also that $G/\rad{g}$
is of exponent two, since for every $x\in G$ we have $g(x^2,y,z) =
g(x,y,z)^2=1$, by multiplicativity of $g$.
\end{proof}

\section{Constructing loops of Cs\"org\H{o} type from the new
setup}\label{Sc:ConstructLoop}

\begin{proposition}\label{Pr:NewToOriginal}
Suppose that $G$, $Z$, $\delta$ is a new setup. Set $R=\rad \delta$ and $N =
G'R$. Then $\delta$ is well-defined as a mapping $\delta:G/R\times G/R\to Z$,
and \eqref{Eq:OldG} holds.
\end{proposition}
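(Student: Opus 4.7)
The plan is to verify each clause of \eqref{Eq:OldG} in turn, together with the well-definedness of $\delta$ as a mapping $G/R\times G/R\to Z$. The hypothesis \eqref{Eq:NewG} already gives $Z\le Z(G)$. Lemma \ref{Lm:Radicals}(ii) gives $R=\rad\delta\unlhd G$, and then $N=G'R$ is normal as a product of two normal subgroups. The inclusions $R\le N$ and $G'\le N$ are tautological from the definition of $N$.

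Next I would handle $Z\le R$, which is needed before the well-definedness statement even makes sense. Because $\delta$ is given as a mapping on $G/Z\times G/Z$, for any $t\in Z$ all four translates $\delta(tx,y)$, $\delta(xt,y)$, $\delta(x,ty)$, $\delta(x,yt)$ collapse to $\delta(x,y)$, so $Z\subseteq\rad_1\delta$. Since $Z\le Z(G)$, conjugation by any element of $G$ fixes $Z$ pointwise, so $Z$ is contained in $\core{G}{(\rad_1\delta)}=R$. For well-definedness on $G/R\times G/R$ it then suffices to observe that $R\subseteq\rad_1\delta$ directly from the definition of the core, so replacing either argument of $\delta$ by a representative of its $R$-coset leaves $\delta$ unchanged.

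The only substantive clause is $N/R\le Z(G/R)$, equivalently $[G,G']\le R$. The key ingredient was already isolated at the opening of \S\ref{Sc:Three}: the conditions \eqref{Eq:D1} and \eqref{Eq:D2} force $G'\le\mul\delta$. This is exactly the hypothesis of Lemma \ref{Lm:RadMul}, so part (ii) of that lemma yields $\mul\delta/\rad\delta\le Z(G/\rad\delta)$. Therefore
\begin{displaymath}
    N/R=G'R/R=G'\rad\delta/\rad\delta\le\mul\delta/\rad\delta\le Z(G/R),
\end{displaymath}
completing the proof. No genuine obstacle arises in this proposition itself, since the real technical content—that $G'\le\mul\f$ implies $[\mul\f,G]\le\rad\f$—has already been packaged into Lemma \ref{Lm:RadMul}(ii); here one only has to assemble the ingredients.
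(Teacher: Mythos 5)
Your proof is correct, and for the only substantive clause, $N/R\le Z(G/R)$, it takes a genuinely different route from the paper. The paper invokes Lemma \ref{Lm:fgh}(i) to get $[G,G']\le Z\le R$, a fact whose proof rests on condition \eqref{Eq:D3}; the centrality of $N/R$ then follows at once. You instead observe that \eqref{Eq:D1} and \eqref{Eq:D2} give $G'\le\mul\delta$, and apply Lemma \ref{Lm:RadMul}(ii) together with $R=\rad\delta\le\mul\delta$ (Lemma \ref{Lm:Mul}(ii)) to get $N/R=G'R/R\le\mul\delta/\rad\delta\le Z(G/R)$. Your argument is more economical in hypotheses: it never touches \eqref{Eq:D3}, so it shows the conclusion holds for any $\delta$ with $G'\le\mul\delta$, staying entirely within the radical/multiplicative-part machinery of \S\ref{Sc:RadMul}. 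The paper's route proves the stronger statement $[G,G']\le Z$ (centrality of $G'$ already modulo $Z$, not merely modulo $R$), which costs nothing extra there because Lemma \ref{Lm:fgh}(i) is needed later anyway ($\cl{G}\le 3$, etc.). Your treatment of the remaining clauses ($Z\le R$ via $Z\subseteq\rad_1\delta$ and centrality of $Z$, well-definedness from $R\subseteq\rad_1\delta$, and the trivial inclusions and normality of $N$) matches the paper's; one small point both you and the paper leave implicit is that $\delta$ satisfies \eqref{Eq:Cocycle}, which follows from \eqref{Eq:D2} with $x=y=1\in G'$ and \eqref{Eq:D1}, and is needed before Lemmas \ref{Lm:Radicals}--\ref{Lm:RadMul} can be applied.
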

\begin{proof}
By Lemma \ref{Lm:Radicals}, $Z\le R\unlhd G$ and $\delta$ is well defined
modulo $R$. Obviously, $R\le N$, $G'\le N$, and $N\unlhd G$. Since $[G,G']\le
Z\le R$ by Lemma \ref{Lm:fgh}(i), we have $N/R=G'R/R\le Z(G/R)$.
\end{proof}

Suppose that $G$, $Z$, $\delta$ is a new setup, and let $R=\rad\delta$,
$N=G'R$, $\overline{G}=G/R$, $\overline{N}=N/R$. In view of Proposition
\ref{Pr:NewToOriginal}, to obtain an original setup from $G$ we only need to
construct $\mu:\overline{G}\times \overline{G}\to Z$ so that
\eqref{Eq:Mu0}--\eqref{Eq:Mu2} hold. We show how to construct all such mappings
$\mu$ in principle (by rephrasing the problem in terms of compatible parameter
sets below), and we construct one $\mu$ explicitly (see Lemma
\ref{Lm:DefaultP}).

Let $T=\{t_1=1,\dots,t_n\}$ be a transversal to $\overline{N}$ in
$\overline{G}$. We say that
\begin{equation}\label{Eq:CPS}
    \mathcal P = \{\psi_k;\;k\in \overline{N}\}\cup\{\varphi_i;\;1\le i\le n\}\cup\{\tau_{ij};\;1\le i\le j\le n\}
\end{equation}
is a \emph{compatible parameter set} if
\begin{enumerate}
\item[$\bullet$] $\psi_k:\overline{G}\to Z$ is a homomorphism for every
    $k\in \overline{N}$,
\item[$\bullet$] $\psi_{k'}(k) = \delta(k',k)\psi_k(k')$ for every $k$,
    $k'\in \overline{N}$,
\item[$\bullet$] $\varphi_i:\overline{N}\to Z$ is a homomorphism for every
    $1\le i\le n$,
\item[$\bullet$] $\psi_k(t_i) = \varphi_i(k)$ for every $k\in \overline{N}$
    and $1\le i\le n$,
\item[$\bullet$] $\tau_{ij}\in G$ for every $1\le i\le j\le n$, and
    $\tau_{1i} = 1$ for every $1\le i\le n$.
\end{enumerate}

Note that $\psi_k(1)=1$, $\psi_1(k) = \delta(1,k)\psi_k(1)=1$ and $\varphi_1(k)
= \psi_k(1) = 1$ holds for every $k\in\overline{N}$ in a compatible parameter
set.

\begin{lemma}\label{Lm:DefaultP}
In a new setup $G$, $Z$, $\delta$, let $R=\rad\delta$, $N=G'R$,
$\overline{G}=G/R$, $\overline{N}=N/R$, and let $T=\{t_1=1$, $\dots$, $t_n\}$
be a transversal to $\overline{N}$ in $\overline{G}$. For every
$k\in\overline{N}$, let $\psi_k = \delta(k,-)$. For every $1\le i\le n$, let
$\varphi_i=\delta(-,t_i)$. For every $1\le i\le j\le n$, let $\tau_{ij} =
\delta(t_i,t_j)$. Then $\mathcal P$ of \eqref{Eq:CPS} is a compatible parameter
set.
\end{lemma}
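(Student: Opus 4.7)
The plan is to verify each of the five defining clauses of a compatible parameter set directly, using the setup identities together with the containments $Z\le R=\rad\delta$, $G'\le\mul\delta$, and $\mul\delta\le\rad f$ already established in \S \ref{Sc:RadMul} and Lemma \ref{Lm:fgh}. The verifications are largely bookkeeping; only one clause carries real content.

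First I would check that $\psi_k=\delta(k,-)$ and $\varphi_i=\delta(-,t_i)$ descend to $\overline G=G/R$ and $\overline N=N/R$ respectively, because $R=\rad\delta\subseteq\rad_1\delta$ kills $\delta$ in either slot from either side. To see that $\psi_k$ is a homomorphism I would choose a representative of $k$ in $G'$ (available since $\overline N=G'R/R$) and combine \eqref{Eq:D1} with \eqref{Eq:D2}: one has $\delta(k,xy)=\delta(xy,k)^{-1}=(\delta(x,k)\delta(y,k))^{-1}=\delta(k,x)\delta(k,y)$, where \eqref{Eq:D2} applies because $k\in G'$. The analogous argument with the commutator in the first slot shows that $\varphi_i$ is a homomorphism on $\overline N$. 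The compatibility $\psi_k(t_i)=\varphi_i(k)$ is then literally immediate from the definitions, since both sides equal $\delta(k,t_i)$, while $\tau_{ij}=\delta(t_i,t_j)\in Z\subseteq G$ is trivial and $\tau_{1i}=\delta(1,t_i)=1$ follows from the cocycle normalization \eqref{Eq:Cocycle} assumed throughout \S \ref{Sc:RadMul}.

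The one genuinely substantive clause, and what I view as the main obstacle, is $\psi_{k'}(k)=\delta(k',k)\psi_k(k')$, which with the default choice collapses to the assertion that $\delta(k,k')=1$ for every $k,k'\in\overline N$. I would dispose of it by choosing $G'$-representatives $c,c'$ of $k,k'$ and expanding $c=\prod_i [a_i,b_i]$ as a product of commutators. Since $c'\in G'$, condition \eqref{Eq:D2} permits multiplicativity in the first argument, so $\delta(c,c')=\prod_i \delta([a_i,b_i],c')=\prod_i f(a_i,b_i,c')$; and by Lemma \ref{Lm:fgh}(ii) together with $G'\le\mul\delta$ we have $c'\in G'\le\mul\delta\le\rad f$, so each factor is trivial. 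Once the vanishing of $\delta$ on $\overline N\times\overline N$ is secured, nothing remains to check.
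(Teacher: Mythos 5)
Your proof is correct and follows essentially the same route as the paper: both reduce the substantive clause to the vanishing of $\delta$ on $\overline{N}\times\overline{N}$ and obtain it from $G'\le\mul\delta\le\rad f$ (Lemma \ref{Lm:fgh}(ii)), with the homomorphism clauses coming from \eqref{Eq:D1}--\eqref{Eq:D2} (equivalently, $N\le\mul\delta$). The only cosmetic quibble is your appeal to \eqref{Eq:Cocycle} for $\tau_{1i}=1$: that normalization is not a stated hypothesis on $\delta$ but follows instantly from \eqref{Eq:D2} with $x=y=1\in G'$.
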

\begin{proof}
We have $G'\le\mul\delta$, $R\le\mul\delta$ by Lemma \ref{Lm:Mul}, and so $N\le
\mul\delta$. Then $\psi_k$ and $\varphi_i$ are homomorphisms. For $k$, $k'\in
N$, the equality $\psi_{k'}(k) = \delta(k',k)\psi_k(k')$ is equivalent to
$\delta(k,k')=1$. Now, $\delta(N,N) = \delta(G'R,G'R) = \delta(G',G')$ and
$\delta([x,y],G') = f(x,y,G')=1$ by Lemma \ref{Lm:fgh}(ii). Finally, the
equality $\psi_k(t_i) = \varphi_i(k)$ holds trivially.
\end{proof}

Given a compatible parameter set $\mathcal P$, define $\mu_{\mathcal
P}=\mu:\overline{G}\times \overline{G}\to Z$ as follows:
\begin{align*}
    &\mu(k,k') = \psi_k(k')\text{ for every $k$, $k'\in \overline{N}$},\\
    &\mu(k,t_i) = \varphi_i(k)\text{ for every $k\in \overline{N}$, $1\le i\le n$},\\
    &\mu(t_i,k) = \delta(t_i,k)\mu(k,t_i)\text{ for every $k\in \overline{N}$, $1\le i\le n$},\\
    &\mu(t_i,t_j) = \tau_{ij}\text{ for every $1\le i\le j\le n$},\\
    &\mu(t_j,t_i) = \delta(t_j,t_i)\mu(t_i,t_j)\text{ for every $1\le i\le j\le n$},\\
    &\mu(kt,k't') = \mu(k,k')\mu(k,t')\mu(t,k')\mu(t,t')\text{ for every $k$, $k'\in \overline{N}$, $t$, $t'\in T$}.
\end{align*}
Observe that $\mu_{\mathcal P}$ is well-defined. (The first five lines do not
lead to a contradiction: $\mu(k,1) = \psi_k(1)=1$ according to the first line
and $\mu(k,1) = \varphi_1(k)=1$ according to the second line, $\mu(1,k) =
\psi_1(k)=1$ according to the first line and $\mu(1,k) = \delta(1,k)\mu(k,1) =
1$ according to the third line. We also have $\mu(1,t_i)=\varphi_i(1)=1$ by the
second line and $\mu(1,t_i) = \tau_{1i}=1$ by the fourth line, $\mu(t_i,1) =
\delta(t_i,1)\mu(1,t_i)=1$ by the third line and $\mu(t_i,1) =
\delta(t_i,1)\mu(1,t_i)=1$ by the fifth line. Finally, $\mu(t_i,t_i) =
\delta(t_i,t_i)\mu(t_i,t_i)$ in the fifth line is sound thanks to
\eqref{Eq:D0}. Adding the sixth line also does not lead to a contradiction:
$\mu(k1,k'1) = \mu(k,k')\mu(k,1)\mu(1,k')\mu(1,1) = \mu(k,k')$,
$\mu(k1,1t_i) = \mu(k,1)\mu(k,t_i)\mu(1,1)\mu(1,t_i) =
\mu(k,t_i)$, similarly for $\mu(t_i,k)$, and we have $\mu(1t_i,1t_j) =
\mu(1,1)\mu(1,t_j)\mu(t_i,1)\mu(t_i,t_j) = \mu(t_i,t_j)$.)

\begin{lemma}\label{Lm:MuP}
Let $G$, $Z$, $\delta$ be a new setup, $R=\rad\delta$, $N=G'R$,
$\overline{G}=G/R$, $\overline{N}=N/R$, and let $\mathcal P$ be a compatible
parameter set. Then $\mu=\mu_{\mathcal P}$ satisfies
\eqref{Eq:Mu0}--\eqref{Eq:Mu2}.
\end{lemma}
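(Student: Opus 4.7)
The plan is to verify each of \eqref{Eq:Mu0}--\eqref{Eq:Mu2} by reducing to the ``building block'' values of $\mu$ on pairs from $\overline{N}\cup T$, where the definition is given directly by the compatible parameter set. The sixth line of the definition of $\mu_{\mathcal P}$ is essentially a bilinearity statement: writing every $x\in\overline G$ uniquely as $kt$ with $k\in\overline N$ and $t\in T$ (possible because $\overline N\unlhd\overline G$), it expresses $\mu(kt,k't')$ as a product of four factors involving only pairs from $\overline N\cup T$.

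Before the main computation I would record two preliminary facts. First, by Lemma \ref{Lm:fgh}(i) we have $\cl{G/Z}\le 2$, hence $\cl{\overline G}\le 2$ and so $\overline N=G'R/R\le Z(\overline G)$; consequently $kt\cdot k't'=kk'\cdot tt'$ for all $k,k'\in\overline N$ and $t,t'\in T$. Second, $G'\le\mul\delta$ and $R\le\mul\delta$ by Lemma \ref{Lm:Mul}, so $\overline N\le\overline{\mul\delta}$, which means $\delta$ is bilinear whenever one of its arguments lies in $\overline N$. From the axioms of a compatible parameter set I would then derive the auxiliary identity $\psi_{k_1k_2}=\psi_{k_1}\psi_{k_2}$ on $\overline G$, by checking it on transversal elements (where it reduces to the homomorphism property of $\varphi_i$) and on elements of $\overline N$ (where the relation $\psi_{k'}(k)=\delta(k',k)\psi_k(k')$ reduces it to bilinearity of $\delta$).

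For \eqref{Eq:Mu0}, I would check $\mu(x,y)\mu(y,x)^{-1}=\delta(x,y)$ on the three building-block cases $(k,k')$, $(k,t_i)$, $(t_i,t_j)$; in each case it is immediate from the relevant line of the definition of $\mu_{\mathcal P}$ together with \eqref{Eq:D1}. The general case $x=kt$, $y=k't'$ then follows because the sixth line of $\mu_{\mathcal P}$ splits the left-hand side into four building-block contributions, and the bilinearity of $\delta$ with a central argument splits $\delta(kt,k't')=\delta(k,k')\delta(k,t')\delta(t,k')\delta(t,t')$ in exactly the same way.

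For \eqref{Eq:Mu1} and \eqref{Eq:Mu2} the strategy is analogous. The key intermediate step is to establish the two ``one-variable'' statements: for each $k\in\overline N$, both $\mu(k,-)$ and $\mu(-,k)$ are homomorphisms $\overline G\to Z$. The first reduces via the sixth line to $\psi_k$ being a homomorphism, which is an axiom. The second combines the auxiliary identity $\psi_{k_1k_2}=\psi_{k_1}\psi_{k_2}$ with the observation that, given the central decomposition $t_it_j=k_{ij}t_{s(i,j)}$, the factors $\delta(t_i,k)\delta(t_j,k)$ recombine to $\delta(k_{ij},k)\delta(t_{s(i,j)},k)$ by bilinearity of $\delta$ in its central argument. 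Once both one-variable statements are in hand, the general cases of \eqref{Eq:Mu1} and \eqref{Eq:Mu2} (one of $x,y,z$ in $\overline N$) follow by decomposing the other two arguments as $kt$ and expanding via the sixth line of $\mu_{\mathcal P}$. I expect the main obstacle to be the bookkeeping in the one-variable statements, in particular coordinating the multiplication rule $t_it_j=k_{ij}t_{s(i,j)}$ with the three compatibility axioms; but once this is handled, the rest is formal.
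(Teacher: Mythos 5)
Your proposal is correct and follows essentially the same route as the paper: a direct verification of \eqref{Eq:Mu0}--\eqref{Eq:Mu2} by expanding everything through the decomposition $\overline{G}=\overline{N}T$ (the sixth line of $\mu_{\mathcal P}$), using centrality of $\overline{N}$, the homomorphism axioms of the compatible parameter set, and the multiplicativity of $\delta$ when one argument lies in $\overline{N}\le\mul\delta/R$. The only differences are organizational: you prove \eqref{Eq:Mu2} directly rather than deducing it from \eqref{Eq:Mu0} and \eqref{Eq:Mu1} as the paper does, and you isolate the identity $\psi_{kk'}=\psi_k\psi_{k'}$ explicitly, which the paper uses implicitly in its case $x\in\overline{N}$ (where $\mu(kk',k'')=\mu(k,k'')\mu(k',k'')$ is taken for granted).
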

\begin{proof}
It suffices to show \eqref{Eq:Mu0} and \eqref{Eq:Mu1}, as \eqref{Eq:Mu2} is a
consequence: if $\{x,y,z\}\cap \overline{N}\ne\emptyset$, we have
\begin{multline*}
    \mu(x,yz) = \delta(x,yz)\mu(yz,x) = \delta(yz,x)^{-1}\mu(yz,x)\\ =
    \delta(y,x)^{-1}\delta(z,x)^{-1}\mu(y,x)\mu(z,x)=
    \delta(x,y)\mu(y,x)\delta(x,z)\mu(z,x) = \mu(x,y)\mu(x,z).
\end{multline*}

Let us prove \eqref{Eq:Mu0}. We have $\mu(k,k') = \psi_k(k') =
\delta(k,k')\psi_{k'}(k) = \delta(k,k')\mu(k',k)$. Since $\mu(t_i,k) =
\delta(t_i,k)\mu(k,t_i)$ for every $k\in \overline{N}$ and $1\le i\le n$, we
have $\mu(k,t_i) = \delta(t_i,k)^{-1}\mu(t_i,k) = \delta(k,t_i)\mu(t_i,k)$,
too. Also, $\mu(t_j,t_i) = \delta(t_j,t_i)\mu(t_i,t_j)$ for every $1\le i\le
j\le n$, which yields $\mu(t_i,t_j) = \delta(t_j,t_i)^{-1}\mu(t_j,t_i) =
\delta(t_i,t_j)\mu(t_i,t_j)$, too. Using the already established equalities, we
have $\mu(kt,k't') = \mu(k,k')\mu(k,t')\mu(t,k')\mu(t,t') =
\delta(k,k')\mu(k',k)\delta(k,t')\mu(t',k)\delta(t,k')\mu(k',t)\delta(t,t')\mu(t',t)
= \delta(kt,k't')\mu(k't',kt)$ for every $k$, $k'\in \overline{N}$, $t$, $t'\in
T$, which is \eqref{Eq:Mu0}.

We split the proof of \eqref{Eq:Mu1} into three cases, depending on which of
the arguments $x$, $y$, $z$ in \eqref{Eq:Mu1} belongs to $\overline{N}$. In all
situations, we will assume that $k$, $k'$, $k''\in \overline{N}$ and $t$, $t'$,
$t''\in T$.

Case $x\in \overline{N}$. We have $\mu(kk't', k''t'') =
\mu(kk',k'')\mu(kk',t'')\mu(t',k'')\mu(t',t'')$ and also
$\mu(k,k''t'')\mu(k't',k''t'') =
\mu(k,k'')\mu(k,t')\mu(k',k'')\mu(k',t'')\mu(t',k'')\mu(t',t'')$, so we need to
check that $\mu(kk',t'') = \mu(k,t'')\mu(k',t'')$, or, with $t''=t_i$,
$\varphi_i(kk') = \varphi_i(k)\varphi_i(k')$, which is true.

Case $y\in \overline{N}$. Recall that $\overline{N}\le Z(\overline{G})$. Thus
$\mu(ktk',k''t'') = \mu(kk't,k''t'')$, while $\mu(kt,k''t'')\mu(k',k''t'') =
\mu(k',k''t'')\mu(kt,k''t'') = \mu(k'kt,k''t'') = \mu(kk't,k''t'')$ by the case
$x\in \overline{N}$.

Case $z\in \overline{N}$. Using $\overline{N}\le Z(\overline{G})$ and the case
$x\in \overline{G}$ once again, we have $\mu(ktk't',k'') =
\mu(kk'tt',k'')=\mu(kk',k'')\mu(tt',k'') = \mu(k,k'')\mu(k',k'')\mu(tt',k'')$,
$\mu(kt,k'')\mu(k't',k'') = \mu(k,k'')\mu(t,k'')\mu(k',k'')\mu(t',k'')$, so we
need to check that $\mu(tt',k'') = \mu(t,k'')\mu(t',k'')$. By \eqref{Eq:Mu0}
this is equivalent to $\delta(tt',k'')\mu(k'',tt') =
\delta(t,k'')\mu(k'',t)\delta(t',k'')\mu(k'',t')$, or
$\mu(k'',tt')=\mu(k'',t)\mu(k'',t')$, or $\psi_{k''}(tt') =
\psi_{k''}(t)\psi_{k''}(t')$, which is true.
\end{proof}

The converse of Lemma \ref{Lm:MuP} is also true:

\begin{lemma} Let $G$, $Z$, $\delta$ be a new setup, $R=\rad\delta$, $N=G'R$,
$\overline{G}=G/R$, $\overline{N}=N/R$. Suppose that $\mu:\overline{G}\times
\overline{G}\to Z$ satisfies \eqref{Eq:Mu0}--\eqref{Eq:Mu2}. Then there exists
a compatible parameter set $\mathcal P$ such that $\mu=\mu_{\mathcal P}$.
\end{lemma}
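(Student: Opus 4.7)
The plan is to simply read the parameters off of $\mu$ itself. Fix a transversal $T=\{t_1=1,\dots,t_n\}$ of $\overline{N}$ in $\overline{G}$ and set
\[
\psi_k = \mu(k,-)\colon \overline{G}\to Z\quad(k\in\overline{N}),\qquad
\varphi_i = \mu(-,t_i)\colon \overline{N}\to Z\quad(1\le i\le n),\qquad
\tau_{ij}=\mu(t_i,t_j)\quad(1\le i\le j\le n).
\]
I would first verify that $\mathcal P=\{\psi_k\}\cup\{\varphi_i\}\cup\{\tau_{ij}\}$ is a compatible parameter set: $\psi_k$ is a homomorphism by \eqref{Eq:Mu2} applied with $k\in\overline{N}$; the relation $\psi_{k'}(k)=\delta(k',k)\psi_k(k')$ is an immediate rewriting of \eqref{Eq:Mu0}; each $\varphi_i$ is a homomorphism by \eqref{Eq:Mu1} (both $k,k'\in\overline{N}$ let us pivot on $t_i$); the equality $\psi_k(t_i)=\varphi_i(k)$ holds trivially since both sides equal $\mu(k,t_i)$; and $\tau_{1i}=\mu(1,t_i)=1$ by the normalization of $\mu$.

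Next, I would check that $\mu_{\mathcal P}=\mu$ on the spot. Comparing the first five defining lines of $\mu_{\mathcal P}$ with the chosen parameters, the first, second, and fourth lines read off directly, while the third and fifth lines ($\mu_{\mathcal P}(t_i,k)=\delta(t_i,k)\mu(k,t_i)$ and $\mu_{\mathcal P}(t_j,t_i)=\delta(t_j,t_i)\mu(t_i,t_j)$) reduce to \eqref{Eq:Mu0}. For the sixth line, decompose $x=kt$ and $y=k't'$ with $k,k'\in\overline{N}$ and $t,t'\in T$ (uniquely, since $T$ is a transversal); applying \eqref{Eq:Mu1} with the pivot $k\in\overline{N}$ gives $\mu(kt,k't')=\mu(k,k't')\mu(t,k't')$, and then applying \eqref{Eq:Mu2} to each factor (with pivot $k\in\overline{N}$ on the left and pivot $k'\in\overline{N}$ on the right) yields
\[
\mu(kt,k't') = \mu(k,k')\mu(k,t')\mu(t,k')\mu(t,t'),
\]
which is exactly $\mu_{\mathcal P}(kt,k't')$.

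There is no real obstacle: the hypothesis that $\mu$ satisfies \eqref{Eq:Mu0}--\eqref{Eq:Mu2} is precisely the list of identities needed, and the only mild point to watch is that the sixth-line computation uses both \eqref{Eq:Mu1} and \eqref{Eq:Mu2} (pivoting twice on elements of $\overline{N}$), which the assumption $\{x,y,z\}\cap N\ne\emptyset$ permits. The uniqueness of the decomposition $x=kt$ guarantees that line six of the $\mu_{\mathcal P}$ definition does not conflict with the earlier lines, so the identification $\mu_{\mathcal P}=\mu$ is complete.
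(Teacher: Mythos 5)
Your proposal is correct and follows essentially the same route as the paper: read $\psi_k=\mu(k,-)$, $\varphi_i=\mu(-,t_i)|_{\overline{N}}$, $\tau_{ij}=\mu(t_i,t_j)$ off of $\mu$, check compatibility via \eqref{Eq:Mu0}--\eqref{Eq:Mu2}, and verify $\mu_{\mathcal P}=\mu$ case by case, with the sixth line handled exactly as you do by pivoting on $k$ and $k'$ through \eqref{Eq:Mu1} and \eqref{Eq:Mu2}. No gaps.
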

\begin{proof}
For every $k\in \overline{N}$, let $\psi_k = \mu(k,-):\overline{G}\to Z$. By
\eqref{Eq:Mu2}, $\psi_k$ is a homomorphism. Moreover, $\psi_{k'}(k) = \mu(k',k)
= \delta(k',k)\mu(k,k') = \delta(k',k)\psi_k(k')$ by \eqref{Eq:Mu0}. For every
$1\le i\le n$, let $\varphi_i$ be the restriction of $\mu(-,t_i)$ to
$\overline{N}$. By \eqref{Eq:Mu1}, $\varphi_i$ is a homomorphism. Moreover,
$\psi_k(t_i) = \mu(k,t_i) = \varphi_i(k)$ for every $k\in \overline{N}$, $1\le
i\le n$. Finally, for $1\le i\le j\le n$, let $\tau_{ij} = \mu(t_i,t_j)$. Then
$\mathcal P=\{\psi_k;\;k\in \overline{N}\} \cup \{\varphi_i;\;1\le i\le n\}
\cup \{t_{ij};\;1\le i\le j\le n\}$ is a compatible parameter set, and we can
use it to obtain $\nu = \mu_{\mathcal P}:\overline{G}\times \overline{G}\to Z$.
It remains to show that $\nu=\mu$.

We have $\nu(k,k') = \psi_k(k') = \mu(k,k')$ for every $k$, $k'\in
\overline{N}$; $\nu(k,t_i) = \varphi_i(k) = \mu(k,t_i)$ for every $k\in
\overline{N}$, $1\le i\le n$; $\nu(t_i,k) = \delta(t_i,k)\nu(k,t_i) =
\delta(t_i,k)\mu(k,t_i) = \mu(t_i,k)$ for every $k\in \overline{N}$, $1\le i\le
n$, by \eqref{Eq:Mu0}; $\nu(t_i,t_j) = \tau_{ij} = \mu(t_i,t_j)$ for every
$1\le i\le j\le n$; $\nu(t_j,t_i) = \delta(t_j,t_i)\nu(t_i,t_j) =
\delta(t_j,t_i)\mu(t_i,t_j) = \mu(t_j,t_i)$ for every $1\le i\le j\le n$, by
\eqref{Eq:Mu0}; and, finally, $\nu(kt,k't') =
\nu(k,k')\nu(k,t')\nu(t,k')\nu(t,t') = \mu(k,k')\mu(k,t')\mu(t,k')\mu(t,t') =
\mu(kt,k't')$, by \eqref{Eq:Mu1} and \eqref{Eq:Mu2}.
\end{proof}

Summarizing:

\begin{proposition}\label{Pr:New}
Assume that $G$, $Z$, $\delta$ form a new setup, and let $R=\rad\delta$,
$N=G'R$. Let $\mu:G/R\times G/R\to Z$ be a mapping satisfying
\eqref{Eq:Mu0}--\eqref{Eq:Mu2}, which is guaranteed to exist. Then $G$, $Z$,
$R$, $N$, $\mu$, $\delta$ form an original setup.

In particular, with $Q=G[\mu]$, we have $\cl{G}\le 3$, $\cl{Q}\le 3$, and
$\cl{\inn Q}=1$. Moreover, $\cl{Q}=3$ if and only if \eqref{Eq:Nontrivial} is
satisfied.
\end{proposition}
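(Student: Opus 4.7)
The proof is essentially an assembly of results already proved in this and the preceding section, combined with an appeal to Proposition \ref{Pr:DV1}. The plan is to verify the six conditions \eqref{Eq:OldG}--\eqref{Eq:D} of the original setup and then quote the nilpotency statements.

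First I would justify the parenthetical existence claim for $\mu$. Lemma \ref{Lm:DefaultP} exhibits an explicit compatible parameter set $\mathcal{P}$ built directly from $\delta$ (via $\psi_k=\delta(k,-)$, $\varphi_i=\delta(-,t_i)$, and $\tau_{ij}=\delta(t_i,t_j)$), and Lemma \ref{Lm:MuP} then shows that $\mu_{\mathcal{P}}:G/R\times G/R\to Z$ satisfies \eqref{Eq:Mu0}--\eqref{Eq:Mu2}. Hence at least one valid choice of $\mu$ always exists, and the hypothesis of the proposition is nonvacuous.

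Next I would check, for any $\mu$ satisfying \eqref{Eq:Mu0}--\eqref{Eq:Mu2}, that the tuple $(G,Z,R,N,\mu,\delta)$ meets the definition of an original setup. The chain of inclusions and normality assertions in \eqref{Eq:OldG}, namely $Z\le R\le N\unlhd G$, $Z\le Z(G)$, $R\unlhd G$, $G'\le N$, and $N/R\le Z(G/R)$, is exactly the content of Proposition \ref{Pr:NewToOriginal}. Conditions \eqref{Eq:Mu0}, \eqref{Eq:Mu1}, and \eqref{Eq:Mu2} are part of the hypothesis on $\mu$. Condition \eqref{Eq:D} is literally identical to \eqref{Eq:D3} of the new setup. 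So $(G,Z,R,N,\mu,\delta)$ is an original setup as required.

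Finally, with that verified, Proposition \ref{Pr:DV1} applies verbatim to $Q=G[\mu]$, yielding $\cl{G}\le 3$, $\cl{Q}\le 3$, $\cl{\inn Q}=1$, and the equivalence of $\cl{Q}=3$ with the nontriviality condition \eqref{Eq:Nontrivial}. (The bound $\cl{G}\le 3$ has already been established independently in Lemma \ref{Lm:fgh}(i).) There is no real obstacle in this proof: all the substantive work has been carried out beforehand in Lemmas \ref{Lm:fgh}, \ref{Lm:DefaultP}, and \ref{Lm:MuP}, and in Proposition \ref{Pr:NewToOriginal}; the present statement is a packaging of those results that records the bridge from the new setup back to the original setup and hence to the loops $G[\mu]$ of Cs\"org\H{o} type.
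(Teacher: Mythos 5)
Your proposal is correct and follows essentially the same route as the paper: existence of $\mu$ via the default compatible parameter set of Lemma \ref{Lm:DefaultP} together with Lemma \ref{Lm:MuP}, condition \eqref{Eq:OldG} via Proposition \ref{Pr:NewToOriginal}, identity of \eqref{Eq:D} with \eqref{Eq:D3}, and then Proposition \ref{Pr:DV1} for the conclusions about $Q=G[\mu]$. (You even cite Proposition \ref{Pr:NewToOriginal} where the paper's proof text mistakenly points to Proposition \ref{Pr:OriginalToNew}.)
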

\begin{proof}
By Proposition \ref{Pr:OriginalToNew}, \eqref{Eq:OldG} holds. With the
compatible parameter set $\mathcal P$ of Lemma \ref{Lm:DefaultP}, the mapping
$\mu=\mu_{\mathcal P}$ satisfies \eqref{Eq:Mu0}--\eqref{Eq:Mu2}, by Lemma
\ref{Lm:MuP}. Hence $G$, $Z$, $R$, $N$, $\mu$, $\delta$ form an original setup.
Then $\cl{G}$, $\cl{Q}$ and $\cl{\inn{Q}}$ depend only on $\delta$, and are as
in Proposition \ref{Pr:DV1}.
\end{proof}

\section{Structural properties of minimal setups}\label{Sc:MinProp}

A new setup $G$, $Z$, $\delta$ with $g$ as in \eqref{Eq:fgh} is said to be
\emph{nontrivial} if $g\ne 1$. By Proposition \ref{Pr:New}, the associated loop
$Q=G[\mu]$ satisfies $\cl{Q}=3$ if and only if the setup is nontrivial. A
nontrivial setup is said to be \emph{minimal} if $|G|$ is as small as possible.

In \cite{DV1}, the original setup yielded many loops $Q$ of order $128$ with
$\inn Q$ abelian and $\cl{Q}=3$. Hence we can assume $|G|\le 128$ in a minimal
setup.

In this section we show that $|G|$ is even in any nontrivial setup, that
$|G|=128$ in a minimal setup, and obtain many structural results that allow us
to describe all minimal setups in the next section.

The investigation will be guided by the already established inclusions
\begin{equation}\label{Eq:Inclusions}
    G\ge\rad g\ge\rad f\ge\mul\delta\ge\rad\delta\ge Z
\end{equation}
valid in any new setup, cf. Lemmas \ref{Lm:Mul}, \ref{Lm:fgh}. It is natural to
work with $\mul\delta$ rather than $N=G'\rad\delta\le \mul\delta$ here.

\begin{lemma}\label{Lm:ZCyclic}
In a minimal setup $G$, $Z$, $\delta$ the group $Z$ is cyclic of even order,
and $g:(G/\rad g)^3\to \{a\in Z;\;a^2=1\}$ is a nontrivial trilinear
alternating form.
\end{lemma}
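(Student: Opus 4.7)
The lemma has three assertions: $Z$ is cyclic, $|Z|$ is even, and the induced map is a nontrivial trilinear alternating form. My plan is to read off the last two quickly from Lemma \ref{Lm:fgh}(v) and then prove cyclicity by a minimality-shrinking argument.

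Lemma \ref{Lm:fgh}(v) already provides that $g$ induces a trilinear alternating map $(G/\rad g)^3 \to \{a \in Z : a^2 = 1\}$. Nontriviality of the induced map is immediate: by the definition of a nontrivial setup $g \ne 1$ on $G^3$, and since $g$ factors through $(G/\rad g)^3$ the induced map is likewise not identically $1$. Evenness of $|Z|$ is equally direct: $\im g \subseteq \{a \in Z : a^2 = 1\}$ together with $g \ne 1$ forces $Z$ to contain an element of order $2$, so $2 \mid |Z|$.

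For cyclicity I argue by contradiction, assuming $Z$ is not cyclic, and shrink $G$. Pick $b \in \im g$ with $b \ne 1$; by the above $b$ has order $2$. My aim is to produce a subgroup $B \le Z$ with $B \ne 1$ and $b \notin B$; then $\bar\delta$, obtained by composing $\delta$ with the projection $Z \to Z/B$, gives a map $G/Z \times G/Z \to Z/B$, and $(G/B, Z/B, \bar\delta)$ is again a new setup, since $B \le Z \le Z(G)$ forces $Z/B \le Z(G/B)$ and each axiom \eqref{Eq:NewG}--\eqref{Eq:D3} is an equation preserved under the homomorphism $Z \to Z/B$ (for \eqref{Eq:D3} the equation lives in $G$ and patently passes to $G/B$). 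The corresponding $\bar g$ is the composition of $g$ with $Z \to Z/B$; since $b \notin B$, $b$ has nontrivial image in $Z/B$, so $\bar g \ne 1$. But $|G/B| < |G|$, contradicting minimality.

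To produce $B$ I split into two cases. If $Z$ contains an element of odd prime order $p$, I let $B$ be a subgroup of order $p$, which cannot contain the order-$2$ element $b$. Otherwise $Z$ is a $2$-group, and non-cyclicity of a finite abelian $2$-group forces $\dim_{\mathbb{F}_2}\{a \in Z : a^2 = 1\} \ge 2$, so I pick any order-$2$ element $b' \ne b$ and take $B = \langle b' \rangle$. The main obstacle I anticipate is exactly this two-case analysis---one must confirm the non-cyclic hypothesis really does deliver such a $B$ in every case; the verifications that $(G/B, Z/B, \bar\delta)$ is a new setup and that $\bar g \ne 1$ are routine once $B$ is in hand.
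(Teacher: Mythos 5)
Your proposal is correct and takes essentially the same route as the paper: the trilinear alternating form, its nontriviality, and the evenness of $|Z|$ are read off from Lemma \ref{Lm:fgh}(v), and cyclicity is proved by passing to the quotient setup $(G/B,\,Z/B,\,\bar\delta)$ for a nontrivial $B\le Z$ chosen so that $g$ survives, contradicting minimality. The only (immaterial) difference is in how the subgroup is selected: you use the involution structure of $\im g$ to find $B$ avoiding a fixed nontrivial value of $g$, whereas the paper decomposes $Z$ into cyclic factors and quotients by the complement of a factor onto which $g$ projects nontrivially.
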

\begin{proof}
Assume that $G$, $Z$, $\delta$ is a minimal setup and $Z$ is not cyclic. Then
$Z=Z_1\times\cdots\times Z_k$ for some cyclic groups $Z_i$ and $k\ge 2$. Let
$\pi_i:Z\to Z_i$ be the canonical projections. Since $g:G^3\to Z$ satisfies
$g\ne 1$, there is $j$ such that $\pi_j g:G^3\to Z_j$ satisfies $\pi_j g\ne 1$.

Let $Y = \prod_{i\ne j} Z_i$, $\overline{G}=G/Y$, $\overline{Z}=Z/Y\cong Z_j$,
and define $\overline{\delta}:\overline{G}^2\to\overline{Z}$ by
$\overline{\delta}(xY,yY) = \pi_j\delta(x,y)$. Then $\overline{G}$,
$\overline{Z}$, $\overline{\delta}$ is a new setup. Let $\overline{g}$ be
associated with $\overline{\delta}$ in a way analogous to \eqref{Eq:fgh}. Then
$\overline{g} = \pi_j g\ne 1$, a contradiction with the minimality of $|G|$.

Thus $Z$ is cyclic. Assume that it is of odd order. Then $U=\{a\in
Z;\;a^2=1\}=1$ and the setup is trivial by Lemma \ref{Lm:fgh}(v), a
contradiction.

Since $U=\{a\in Z;\;a^2=1\}\cong\mathbb Z_2$, we can view $G/\rad g$ as a
vector space over the two-element field and identify $g$ with a nontrivial
trilinear alternating form, by Lemma \ref{Lm:fgh}(v).
\end{proof}

\begin{lemma}\label{Lm:Reduction}
In a minimal setup, if $x$, $y$, $z\in G$ are such that $g(x,y,z)\ne 1$ then
$G=\langle x, y, z\rangle Z$.
\end{lemma}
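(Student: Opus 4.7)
The plan is to consider the subgroup $H = \langle x,y,z\rangle Z$ of $G$, show that $(H,Z,\delta|_{H/Z\times H/Z})$ is itself a new setup whose associated $g$ remains nontrivial at $(x,y,z)$, and then invoke the minimality of $|G|$ to conclude that $H=G$.

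First I would verify that the conditions \eqref{Eq:NewG}--\eqref{Eq:D3} all survive restriction to $H$. Since $Z\le H$ and $Z\le Z(G)$, we get $Z\le Z(H)$, which is \eqref{Eq:NewG}. The conditions \eqref{Eq:D0}, \eqref{Eq:D1}, \eqref{Eq:D3} are universally quantified pointwise identities that restrict transparently. The only delicate condition is \eqref{Eq:D2}: for the restricted setup this would demand multiplicativity of $\delta$ whenever one of the arguments lies in $H'$, while the ambient setup only provides multiplicativity whenever one of them lies in $G'$. The resolution is the tautological inclusion $H'\le G'$, valid for any subgroup: a witness to membership in $H'$ is automatically a witness to membership in $G'$, so the restricted \eqref{Eq:D2} follows from the ambient one.

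Next I would observe that the mapping defined from $\delta|_H$ via \eqref{Eq:fgh} is precisely the restriction of $g$ to $H^3$, since $f$ is built from $\delta$ and commutators and both operations are preserved by the inclusion $H\hookrightarrow G$. In particular the restricted $g$ takes the value $g(x,y,z)\ne 1$ on the triple $(x,y,z)$, so the restricted new setup is nontrivial in the sense of Section~\ref{Sc:MinProp}.

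The conclusion is then forced by minimality: if $H$ were a proper subgroup of $G$, then $|H|<|G|$ together with the nontrivial new setup on $H$ would contradict the minimality hypothesis on $|G|$; hence $H=G$, which is the claim. The only point requiring any genuine care is condition \eqref{Eq:D2}, and its resolution is the trivial inclusion $H'\le G'$; the rest is routine book-keeping.
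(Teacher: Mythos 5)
Your proposal is correct and follows essentially the same route as the paper: restrict $\delta$ to $L=\langle x,y,z\rangle Z$, note $Z\le Z(L)$ and that conditions \eqref{Eq:D0}--\eqref{Eq:D3} (including \eqref{Eq:D2}, via $L'\le G'$) survive restriction, so the restricted data form a nontrivial new setup, and minimality forces $L=G$. Your write-up merely spells out the \eqref{Eq:D2} point that the paper leaves implicit.
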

\begin{proof}
Let $L=\langle x, y, z\rangle Z$. Since $Z\le Z(G)$, $Z\le L$ and $L\cap
Z(G)\le Z(L)$, we have $Z\le Z(L)$. Restrict $\delta$ to $L\times L$. Then $L$,
$Z$, $\delta$ is a new setup (i.e., the conditions \eqref{Eq:D0}--\eqref{Eq:D3}
hold with $L$ in place of $G$) and since $x$, $y$, $z\in L$, this setup is
nontrivial.
\end{proof}

\begin{lemma}\label{Lm:Factor}
In a minimal setup, let $L$ be a subgroup satisfying $Z\le L\unlhd G$. Then
$G/L$ is generated by at most $3$ elements. In particular, $G/\rad g$ is an
elementary abelian group of order $8$.
\end{lemma}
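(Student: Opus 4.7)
The plan is to reduce to three generators via Lemma \ref{Lm:Reduction} and then combine this with the established structure of $G/\rad g$ from Lemma \ref{Lm:fgh}(v).

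First, since the setup is nontrivial, there exist $x$, $y$, $z\in G$ with $g(x,y,z)\ne 1$. Lemma \ref{Lm:Reduction} then gives $G=\langle x,y,z\rangle Z$, so $G/Z$ is generated by (the images of) $x$, $y$, $z$. Because $Z\le L\unlhd G$, the quotient $G/L$ is a homomorphic image of $G/Z$, hence it too is generated by at most $3$ elements. This establishes the first assertion.

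For the ``in particular'' claim, I would first verify that $L=\rad g$ satisfies the hypotheses. The inclusions \eqref{Eq:Inclusions} give $Z\le\rad g$, and since $g$ is multiplicative, $\rad g\unlhd G$ by the definition \eqref{Eq:Rad3Mult} (recalled in the paragraph after that equation). Thus the first part applies and $G/\rad g$ is generated by at most $3$ elements. By Lemma \ref{Lm:fgh}(v), $G/\rad g$ is an elementary abelian $2$-group, so it is a vector space over $\gf{2}$ of dimension $d\le 3$.

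It remains to show $d\ge 3$. Lemma \ref{Lm:fgh}(v) also tells us that $g$ induces a \emph{nontrivial} trilinear alternating form on $(G/\rad g)^3$; but any alternating trilinear form on a $\gf{2}$-space of dimension at most $2$ must vanish (an alternating multilinear form is zero whenever its arguments are linearly dependent, and any three vectors in a space of dimension $\le 2$ are dependent). So $d\ge 3$, hence $d=3$ and $|G/\rad g|=8$. No single step is a real obstacle here; the only thing to be careful about is checking that $\rad g$ is indeed normal and contains $Z$ so that the first part can be applied to it.
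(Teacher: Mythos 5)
Your proof is correct and follows essentially the same route as the paper: the paper's (much terser) proof likewise invokes Lemma \ref{Lm:Reduction} for the three-generator claim, Lemma \ref{Lm:fgh}(v) for elementary abelianness, and the observation that $g\ne 1$ forces $\dim(G/\rad g)\ge 3$. Your extra verifications (that $Z\le\rad g\unlhd G$, and that a nonzero alternating trilinear form needs dimension at least $3$, using that $g$ vanishes on repeated arguments by Lemma \ref{Lm:fgh}(ii),(iii)) are exactly the details the paper leaves implicit.
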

\begin{proof}
We know that $G/\rad g$ is an elementary abelian $2$-group by Lemma
\ref{Lm:fgh}(v) and we must have $\dim(G/\rad g)\ge 3$ to ensure $g\ne 1$. The
rest follows from Lemma \ref{Lm:Reduction}.
\end{proof}

\begin{lemma}\label{Lm:MRGR}
In a nontrivial setup, $|\mul\delta/\rad\delta|$ is even and $|G|$ is even.
Moreover, if $\mul\delta/\rad\delta$ is an elementary abelian $2$-group then
$G/\rad f$ is an elementary abelian $2$-group, too.
\end{lemma}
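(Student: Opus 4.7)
My plan is to address the three parts of the statement separately, starting with the easiest, then tackling the main obstacle. For $|G|$ being even, I would invoke Lemma \ref{Lm:fgh}(v), which says that $G/\rad g$ is an elementary abelian $2$-group on which $g$ induces a nontrivial alternating trilinear form. Such a form vanishes on any $\mathbb{F}_2$-space of dimension less than $3$, so $|G/\rad g|\ge 8$, and $|G|$ is even.

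The hard part is $|\mul\delta/\rad\delta|$ even, and I would argue by contradiction. Suppose it is odd. Lemma \ref{Lm:RadMul}(ii) gives that $\mul\delta/\rad\delta$ is an abelian group, so being of odd order its squaring map is a bijection, and by iteration every element is a $2^k$-th power for every $k\ge 1$. Lifting, for each $m\in\mul\delta$ and each $k$ there are $m_k\in\mul\delta$ and $r_k\in\rad\delta$ with $m=m_k^{2^k}r_k$; multiplicativity of $\delta$ at $m_k\in\mul\delta$ together with $r_k\in\rad\delta$ then gives $\delta(m,x)=\delta(m_k,x)^{2^k}\in Z^{2^k}$ for every $x\in G$. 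In any finite abelian group, $\bigcap_k Z^{2^k}$ equals the subgroup $O$ of elements of odd order, so $\delta(\mul\delta,G)\le O$. Since $[x,y]\in G'\le\mul\delta$, this forces $\im f\le O$ and hence $\im g\le O$. But Lemma \ref{Lm:fgh}(v) gives $\im g\le U=\{a\in Z;\,a^2=1\}$, and $O\cap U=1$, so $g=1$, contradicting nontriviality. The key insight is that odd-order quotients are infinitely $2$-divisible, which squeezes $\delta(\mul\delta,G)$ into the odd part of $Z$; this is incompatible with the Hall-Witt-driven identity $g^2=1$.

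For the ``Moreover'' implication, my plan is first to show that $f$ takes values in $U$, and then exploit the multiplicativity of $f$. Given $m^2\in\rad\delta$ for every $m\in\mul\delta$, the commutator $[x,y]\in G'\le\mul\delta$ satisfies $[x,y]^2\in\rad\delta$, so multiplicativity of $\delta$ at $[x,y]$ yields $f(x,y,z)^2=\delta([x,y]^2,z)=1$ for every $z$. Multiplicativity of $f$ in each slot then gives $f(x^2,y,z)=f(y,x^2,z)=f(y,z,x^2)=1$, so $x^2\in\rad f$ for every $x\in G$; combined with $G'\le\mul\delta\le\rad f$ from Lemma \ref{Lm:fgh}(ii), this shows that $G/\rad f$ is abelian of exponent dividing $2$.
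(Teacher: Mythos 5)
Your proof is correct, and the main part takes a genuinely different route from the paper's. For evenness of $|\mul\delta/\rad\delta|$ the paper fixes witnesses $x,y,z$ with $g(x,y,z)\ne 1$, sets $k=|\mul\delta/\rad\delta|$ (odd), and uses multiplicativity of $g$ together with $g^2=1$ (Lemma \ref{Lm:fgh}(iii),(v)) to write $g(x,y,z)=g(x^k,y^k,z^k)$, which it then kills because $[x^k,y^k]\equiv[x,y]^{k^2}\equiv 1\pmod{\rad\delta}$, as $[x,y]\in G'\le\mul\delta$ and $\cl{G/\rad\delta}\le 2$. You argue on the value side instead: oddness makes every element of $\mul\delta$ a $2^k$-th power modulo $\rad\delta$, so $\delta(\mul\delta,G)$, hence $\im f$ and $\im g$, lies in $\bigcap_k Z^{2^k}$, the odd part of the finite group $Z$, contradicting $1\ne\im g\subseteq\{a\in Z;\,a^2=1\}$ from Lemma \ref{Lm:fgh}(v). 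Both arguments rest on the same pillars ($G'\le\mul\delta$ plus multiplicativity of $\delta$ there, and the Hall--Witt-based exponent-$2$ property of $g$); yours trades the power-substitution computation for a $2$-divisibility argument in $Z$, which needs no choice of witnesses but does need $Z$ finite for the intersection identity (harmless here, since the statement presupposes $|G|$ finite). In the ``Moreover'' part you are slightly more economical than the paper: you get $f(x,y,z)^2=\delta([x,y]^2,z)=1$ directly from $[x,y]^2\in\rad\delta$ and multiplicativity of $\delta$ at $[x,y]\in\mul\delta$, then square out each slot of the multiplicative $f$, whereas the paper handles the third slot by passing through $g$ and Lemma \ref{Lm:fgh}(v); your explicit appeal to $G'\le\mul\delta\le\rad f$ for the abelianness of $G/\rad f$ supplies a point the paper leaves implicit. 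One minor remark: for ``$|G|$ even'' nontriviality of the elementary abelian quotient $G/\rad g$ already suffices; the dimension-at-least-$3$ refinement is true but uses that $g$ vanishes on repeated arguments (which follows from Lemma \ref{Lm:fgh}(ii),(iii)), not merely the sign-symmetry in the paper's definition of alternating, and in any case evenness of $|G|$ also follows at once from evenness of $|\mul\delta/\rad\delta|$, the order of a section of $G$.
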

\begin{proof}
Suppose first that $\mul\delta/\rad\delta$ is of odd order $k$ and choose $x$,
$y$, $z\in G$ with $g(x,y,z)\ne 1$. Then $g(x,y,z) = g(x^k,y^k,z^k)$ by Lemma
\ref{Lm:fgh}(iii), (v). On the other hand, $[x^k,y^k](\rad\delta) =
([x,y]^{k^2})\rad\delta = \rad\delta$, since $G'\le\mul\delta$ and
$\cl{G/\rad\delta}\le 2$. Then $f(x^k,y^k,z^k) = \delta([x^k,y^k],z^k) = 1$,
thus $g(x^k,y^k,z^k)=1$, a contradiction.

Now suppose that $\mul\delta/\rad\delta$ is an elementary abelian $2$-group.
Then for every $x$, $y$, $z\in G$ we have $f(x^2,y,z) = \delta([x^2,y],z) =
\delta([x,y]^2,z) = 1$, and similarly $f(x,y^2,z) = 1$. This implies
$f(x,y,z^2) = g(x,y,z^2) = 1$ by Lemma \ref{Lm:fgh}(v).
\end{proof}

\begin{lemma}\label{Lm:RadfElem}
In a minimal setup, $\rad f = \rad g$ and $\im f\setminus \{1\}$ consists of
the unique involution of $Z$.
\end{lemma}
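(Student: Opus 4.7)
The plan is to show that the two assertions of the lemma are equivalent and then to establish the common statement via minimality.

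I would begin by observing that $\im f$ is a subgroup of $Z$: since each of $f(\cdot,y,z)$, $f(x,\cdot,z)$, $f(x,y,\cdot)$ is a homomorphism into $Z$ (Lemma \ref{Lm:fgh}(ii)) and the subgroups of the cyclic group $Z$ (Lemma \ref{Lm:ZCyclic}) are totally ordered by inclusion, the union of their images is itself a subgroup. Moreover, by nontriviality of the setup we have $u\in\im g\subseteq\im f$, so $\{1,u\}\subseteq\im f$ is automatic.

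Next I would prove that $\rad f=\rad g$ is equivalent to $\im f\subseteq\{1,u\}$. If $\rad f=\rad g$ then $G/\rad f$ is elementary abelian of exponent $2$ by Lemma \ref{Lm:fgh}(v), so multilinearity gives $f(x,y,z)^2=f(x^2,y,z)=1$. Conversely, $\im f\subseteq\{1,u\}$ gives $G^2\le\rad f$; combined with $G'\le\mul\delta\le\rad f$ (Lemma \ref{Lm:fgh}(ii)), $Z\le\rad f$, and $G=\langle x,y,z\rangle Z$ for any triple with $g(x,y,z)=u$ (Lemma \ref{Lm:Reduction}), the quotient $G/\rad f$ is a $3$-generated elementary abelian $2$-group of order at most $8$; since $\rad f\le\rad g$ and $|G/\rad g|=8$ (Lemma \ref{Lm:Factor}), we get $\rad f=\rad g$.

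It therefore remains to establish $\im f\subseteq\{1,u\}$ in a minimal setup, and by the second part of Lemma \ref{Lm:MRGR} it suffices to show that $\mul\delta/\rad\delta$ is an elementary abelian $2$-group. I would leverage minimality via the quotient $(G/K,ZK/K,\bar\delta)$, which is a new setup for any $K\unlhd G$ contained in $\rad\delta$: the associated $\bar g$ equals $g$ modulo $K\cap Z$, so nontriviality is preserved whenever $u\notin K$, and minimality forces every such $K$ to be trivial. In particular $|Z|$ must be a power of $2$ (an odd-order subgroup of the cyclic $Z$ would avoid $u$), and every minimal $G$-invariant subgroup of $\rad\delta$ must contain $u$.

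The main obstacle is to convert these structural constraints into the desired exponent-$2$ property of $\mul\delta/\rad\delta$: if some $m\in\mul\delta$ had $m^2\notin\rad\delta$, the strategy is to build, from $m$ together with the relation $[\mul\delta,G]\le\rad\delta$ (Lemma \ref{Lm:RadMul}) and the class-two structure of $G/\rad\delta$, a nontrivial $G$-invariant subgroup of $\rad\delta$ avoiding $u$, producing the required contradiction. This closely parallels the substitution argument of Lemma \ref{Lm:MRGR}, where $(x,y,z)\mapsto(x^k,y^k,z^k)$ combined with $[x^k,y^k]\equiv[x,y]^{k^2}\pmod{\rad\delta}$ forces $g(x^k,y^k,z^k)=1$; here the refinement is more delicate because the $2$-primary structure dictated by the cyclicity of $Z$ obstructs a direct choice of $k$, so the contradiction must be extracted from the module-theoretic constraints above.
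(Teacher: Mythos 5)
Your reduction steps are fine: the equivalence between $\rad f=\rad g$ and $\im f\subseteq\{1,u\}$ (via $G'\le\mul\delta\le\rad f$, Lemma \ref{Lm:Factor} and $|G/\rad g|=8$), the quotient-by-$K\le\rad\delta$ construction generalizing the proof of Lemma \ref{Lm:ZCyclic}, and the observation that Lemma \ref{Lm:MRGR} would finish the job if $\mul\delta/\rad\delta$ were elementary abelian of exponent $2$. But that last claim is exactly where your proof stops being a proof: you explicitly label it ``the main obstacle'' and offer only a hoped-for strategy (manufacture from an element $m\in\mul\delta$ with $m^2\notin\rad\delta$ a nontrivial normal subgroup of $\rad\delta$ avoiding $u$), without carrying it out. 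Nothing in your structural constraints obviously yields such a subgroup, and the target statement you chose is in fact \emph{stronger} than the lemma: in the paper the elementary abelianness of $\mul\delta/\rad\delta$ is only obtained later, in Theorem \ref{Th:Scenarios}, and its proof uses the present lemma together with Propositions \ref{Pr:Case2} and \ref{Pr:CaseIII}. So as written the argument has a genuine gap at its central step.

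The missing ingredient is the numerical bound that minimality provides and that you never invoke: since order-$128$ examples exist by \cite{DV1}, a minimal setup has $|G|\le 128$. The paper's proof is a short counting argument built on this. If $G/\rad f$ were not elementary abelian, then $|Z|=2$ is impossible (values of $f$ would square to $1$, forcing exponent $2$ modulo $\rad f$), so $|Z|\ge 4$ by Lemma \ref{Lm:ZCyclic}; the contrapositive of the second part of Lemma \ref{Lm:MRGR}, together with its parity statement, gives $|\mul\delta/\rad\delta|\ge 4$; Lemma \ref{Lm:fgh}(v) gives $|\rad g/\rad f|\ge 2$; and $|G/\rad g|=8$ by Lemma \ref{Lm:Factor}. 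Multiplying these indices along the chain \eqref{Eq:Inclusions} gives $|G|\ge 256>128$, contradicting minimality. Your quotient-minimality observations (e.g.\ that $|Z|$ is a $2$-power) are correct but too weak to replace this counting; to repair your write-up you should either import the $|G|\le 128$ bound and the index count, or actually construct the normal subgroup you postulate, which you have not done.
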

\begin{proof}
Suppose first that $G/\rad f$ is not an elementary abelian $2$-group. If
$|Z|=2$ then $f(x^2,y,z)=f(x,y,z)^2=1$ by Lemma \ref{Lm:fgh}, a contradiction.
Thus $|Z|\ge 4$ by Lemma \ref{Lm:ZCyclic}. By Lemma \ref{Lm:MRGR} we have
$|\mul\delta/\rad\delta|\ge 4$. By Lemma \ref{Lm:fgh}(v), $G/\rad g$ is
elementary abelian, and thus $|\rad g/\rad f|\ge 2$. Since $|G/\rad g|=8$ by
Lemma \ref{Lm:Factor}, we have $|G|>128$, and so the setup is not minimal.

Now suppose that $G/\rad f$ is elementary abelian. Then $\dim(G/\rad f)\le 3$
by Lemma \ref{Lm:Factor} and $\rad g = \rad f$ follows. Also, $1=f(x^2,y,z) =
f(x,y,z)^2$, so every nontrivial value of $f$ is an involution in $Z$, and this
involution is unique by Lemma \ref{Lm:ZCyclic}.
\end{proof}

We have shown that in a minimal setup we must have $|G/\rad g|=8$,
$|\mul\delta/\rad\delta|\ge 2$ even, and $|Z|\ge 2$ even. We proceed in two
directions, depending on whether $Z(G/\rad\delta)$ is a subgroup of $\rad
f/\rad\delta$ or not.

\subsection{$Z(G/\rad\delta)$ is a subgroup of $\rad f/\rad\delta$}

\begin{lemma}\label{Lm:H}
Let $H$ be a $2$-group of order $\ge 16$, and $H/Z(H)$ an elementary abelian
$2$-group of order $8$. Then $H'$ is an elementary abelian subgroup of $Z(H)$
and is of order $\ge 4$. If $|H'| = 4$, then there exist $u$, $v$, $w \in H$
that generate $H$ modulo $Z(H)$ and satisfy $ab = c$, $1 \notin \{a,b,c\}$,
where $a= [u,v]$, $b = [v,w]$ and $c = [w,u]$.
\end{lemma}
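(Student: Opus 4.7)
For the first assertion, $H/Z(H)$ is abelian, so $H'\le Z(H)$, and $H/Z(H)$ has exponent $2$, so $x^2\in Z(H)$ for every $x\in H$. Bilinearity of the commutator in the class-at-most-two group $H$ yields $[x,y]^2=[x^2,y]=1$, hence $H'$ is generated by elements of order dividing $2$ inside the abelian group $Z(H)$ and is elementary abelian. For the lower bound $|H'|\ge 4$, the commutator induces an $\mathbb F_2$-bilinear alternating map $\kappa:\bar H\times\bar H\to H'$ with $\bar H=H/Z(H)\cong\mathbb F_2^3$; its radical is trivial, since an element $\bar x$ of the radical satisfies $[x,y]=1$ for every $y$, forcing $x\in Z(H)$. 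The case $|H'|=1$ gives $H$ abelian and contradicts $|\bar H|=8$; if $|H'|=2$, then $\kappa$ would be a nondegenerate alternating $\mathbb F_2$-form on $\mathbb F_2^3$, whose Gram matrix is a $3\times 3$ symmetric matrix with zero diagonal, and any such matrix has vanishing determinant over $\mathbb F_2$, a contradiction.

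Now assume $|H'|=4$. Viewing $\kappa$ as a linear map $\Lambda^2\bar H\to H'$ reduces the rest to linear algebra over $\mathbb F_2$. Since the image of $\kappa$ generates $H'$, the map is a surjection from the $3$-dimensional space $\Lambda^2\bar H$ onto the $2$-dimensional space $H'$, so $\ker\kappa$ has dimension $1$. Because $\dim\bar H=3$, every element of $\Lambda^2\bar H$ is decomposable (as $\Lambda^4\bar H=0$), and so $\ker\kappa=\langle\bar u'\wedge\bar v'\rangle$ for some linearly independent $\bar u',\bar v'\in\bar H$; equivalently, $\kappa$ vanishes identically on $U\times U$, where $U=\langle\bar u',\bar v'\rangle$ is a $2$-dimensional subspace of $\bar H$. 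Pick a basis $\bar e_1,\bar e_2$ of $U$ together with any $\bar e_3\in\bar H\setminus U$, and set $\alpha=\kappa(\bar e_1,\bar e_3)$, $\beta=\kappa(\bar e_2,\bar e_3)$. Since $\kappa(\bar e_1,\bar e_2)=1$ while the image of $\kappa$ is all of $H'$, the subgroup $\langle\alpha,\beta\rangle$ must equal $H'$; in particular $\alpha\ne\beta$ are distinct nonidentity elements and $H'=\{1,\alpha,\beta,\alpha\beta\}$.

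Finally, let $u,v,w\in H$ be lifts of $\bar e_3$, $\bar e_1+\bar e_3$, and $\bar e_2+\bar e_3$. Their images in $\bar H$ form a basis (the pairwise sums recover $\bar e_1,\bar e_2$), so $u,v,w$ generate $H$ modulo $Z(H)$. Expanding the three commutators by bilinearity of $\kappa$ and using $\kappa(\bar x,\bar y)=\kappa(\bar y,\bar x)$, valid because $H'$ has exponent $2$, one gets $a=[u,v]=\alpha$, $b=[v,w]=\alpha\beta$, $c=[w,u]=\beta$, so $ab=\alpha\cdot\alpha\beta=\beta=c$ and each of $a,b,c$ is nontrivial. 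The only delicate point is this last step: once one sees that the three commutators determined by an arbitrary basis of $\bar H$ must span the $2$-dimensional space $H'$, the natural idea is to arrange for them to be the three distinct nonidentity elements of $H'$, which is exactly what the triple $(\bar e_3,\bar e_1+\bar e_3,\bar e_2+\bar e_3)$ built from a basis of the singular subspace $U$ achieves.
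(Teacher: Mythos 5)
Your proof is correct, and while the first half (class two, $[x,y]^2=[x^2,y]=1$, nondegeneracy of the induced alternating form on $H/Z(H)\cong\mathbb F_2^3$ ruling out $|H'|\le 2$) coincides with the paper's argument, your treatment of the case $|H'|=4$ is genuinely different. The paper starts from an arbitrary generating triple $x,y,z$ of $H$ modulo $Z(H)$ and runs a hands-on case analysis on the values of $[x,y],[y,z],[z,x]$ (all distinct and nontrivial; two equal; one trivial), repairing the triple in each case by substitutions such as $u=zx$, $v=zy$, $w=z$. You instead pass to the induced linear map $\Lambda^2\bar H\to H'$, observe that its kernel is one-dimensional and spanned by a decomposable vector $\bar u'\wedge\bar v'$ (every $2$-vector in a $3$-dimensional space is decomposable), hence that there is a $2$-dimensional subspace $U=\langle\bar e_1,\bar e_2\rangle$ on which commutators vanish, and then write down the basis $\bar e_3,\ \bar e_1+\bar e_3,\ \bar e_2+\bar e_3$ whose pairwise commutators are exactly the three involutions $\alpha,\alpha\beta,\beta$ of $H'$, giving $ab=c$ in one stroke. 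Your route isolates the structural reason the triple exists (a totally "commuting" plane mod the center) and produces it uniformly, at the price of invoking exterior-algebra language; the paper's route is more elementary but more ad hoc. One small point of care: your parenthetical justification of decomposability via $\Lambda^4\bar H=0$ leans on the Pl\"ucker-type criterion, which is delicate in characteristic $2$ in general; for $\dim\bar H=3$ the fact is nonetheless true in all characteristics (e.g.\ consider $v\mapsto\omega\wedge v$ into the one-dimensional $\Lambda^3\bar H$, whose kernel contains two independent vectors, and expand $\omega$ in a basis extending them), so this is a presentational gloss rather than a gap.
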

\begin{proof}
The mapping $[-,-]$ induces a nondegenerate alternating bilinear mapping of
$H/Z(H)$ into $H'\le Z(H)$. From $[x^2,y]=[x,y]^2=1$ we see that $H'$ has to be
elementary abelian. If $|H'|\le 2$ then $[-,-]:(H/Z(H))^2\to H'$ is an
alternating bilinear form with trivial radical and $\dim(H/Z(H))=3$, which is
impossible. Therefore $|H'|\ge 4$.

Assume that $H'$ is the Klein group, and let $H/Z(H)$ be generated by $x$, $y$,
$z$. Then $H'$ is generated by $[x,y]$, $[y,z]$ and $[z,x]$. Note that at most
one of $[x,y]$, $[y,z]$, $[z,x]$ is trivial, else the three elements do not
generate $H'$. The three elements cannot all be the same for the same reason.

If all $[x,y]$, $[y,z]$, $[z,x]$ are nontrivial and distinct, we automatically
have $[x,y][y,z]=[z,x]$ and are done. If all three are nontrivial and precisely
two coincide, say $[x,y]=[y,z]$, then $[xz,y]=[x,y][z,y]=1$ and $[y,z]$,
$[z,xz]=[z,x]$ are two distinct nontrivial elements of $H'$. We can therefore
assume that $[x,y]=1$ and $[y,z]$, $[z,x]$ are two distinct nontrivial elements
of $H'$. Then $u=zx$, $v=zy$, $w=z$ do the job, as $[u,v]=[zx,zy] =
[z,y][x,z][x,y] = [z,y][x,z] = [zy,z][z,zx] = [v,w][w,u]$ is not equal to $1$.
\end{proof}

\begin{lemma}[Baer]\label{Lm:Baer} Let $H$ be a group and $H/Z(H)$ an abelian group. For a
prime $p$ let $e(p)$ be the exponent of the $p$-primary component of $H/Z(H)$.
Then $\mathbb Z_{e(p)}\times \mathbb Z_{e(p)}\le H/Z(H)$.
\end{lemma}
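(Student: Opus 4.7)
My plan is to argue by contradiction. Fix a prime $p$ with $e(p)=p^n$ finite, let $T_p$ denote the $p$-primary component of $H/Z(H)$, and suppose $\mathbb{Z}_{p^n}\times\mathbb{Z}_{p^n}\not\le H/Z(H)$; the goal is to produce an element $x\in H$ whose $p^{n-1}$-th power sits in $Z(H)$ while $\overline{x}\in T_p$ has order exactly $p^n$, which is impossible. Since $H/Z(H)$ is abelian the group $H$ has nilpotency class at most two, and consequently the commutator map is biadditive: $[a^k,b]=[a,b]^k=[a,b^k]$ for all $a,b\in H$ and all integers $k$. The structure theorem applied to the bounded abelian $p$-group $T_p$, combined with the standing hypothesis, yields a decomposition $T_p=\langle\overline{x}\rangle\oplus B$ with $\overline{x}$ of order $p^n$ and $B$ of exponent at most $p^{n-1}$; for otherwise $T_p$ would already contain two independent cyclic summands of order $p^n$, giving the forbidden $\mathbb{Z}_{p^n}\times\mathbb{Z}_{p^n}$.

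The crux is to show that $[x,h]^{p^{n-1}}=1$ for every $h\in H$, which via biadditivity is equivalent to $x^{p^{n-1}}\in Z(H)$ and hence the desired contradiction. For a given $h$, I split $\overline{h}$ according to the primary decomposition of the torsion part of $H/Z(H)$ as $\overline{h}=\overline{h}_p+\overline{h}_{p'}$, so biadditivity gives $[x,h]=[x,h_p][x,h_{p'}]$. The factor $[x,h_{p'}]$ vanishes outright because its order simultaneously divides $p^n$, the order of $\overline{x}$, and the order of $\overline{h}_{p'}$, which is coprime to $p$. For the $p$-part I write $\overline{h}_p=k\overline{x}+\overline{b}$ with $\overline{b}\in B$; biadditivity together with $[x,x]=1$ collapses $[x,h_p]$ to $[x,b]$, and then $[x,b]^{p^{n-1}}=[x,b^{p^{n-1}}]=1$ because $\overline{b}$ has order dividing $p^{n-1}$, so $b^{p^{n-1}}$ lies in $Z(H)$. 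Combining, $[x,h]^{p^{n-1}}=1$ for every $h\in H$, so $x^{p^{n-1}}$ centralizes $H$, contradicting the order of $\overline{x}$.

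The main technical ingredient is the splitting $T_p=\langle\overline{x}\rangle\oplus B$, which amounts to knowing that a maximal-order cyclic subgroup of a bounded abelian $p$-group is a direct summand; this is standard from Pr\"ufer's theorem on bounded abelian groups, or directly from the structure theorem in the finite case actually used in this paper. Everything else reduces to elementary commutator calculus in a class-two group.
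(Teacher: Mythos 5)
The paper states this lemma without proof (it is quoted as a classical result of Baer and is only ever applied to finite $2$-groups), so there is no internal argument to compare with; your proof has to stand on its own. For the situation the paper actually needs --- $H$ finite, or more generally $H/Z(H)$ a torsion group --- your argument is correct and complete: the class-two commutator calculus, the reduction of $x^{p^{n-1}}\in Z(H)$ to $[x,h]^{p^{n-1}}=1$ for all $h$, the use of Pr\"ufer's theorem to write $T_p=\langle\overline{x}\rangle\oplus B$ with $\exp B\le p^{n-1}$ under the assumption that $\mathbb{Z}_{p^n}\times\mathbb{Z}_{p^n}$ does not embed, and the coprimality argument killing $[x,h_{p'}]$ are all sound.

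As a proof of the statement as literally written, however, there is a genuine gap: the decomposition $\overline{h}=\overline{h}_p+\overline{h}_{p'}$ exists only when $\overline{h}$ lies in the torsion subgroup of $H/Z(H)$, and nothing in the hypotheses makes $H/Z(H)$ torsion, so your key step ``$[x,h]^{p^{n-1}}=1$ for every $h\in H$'' is only established for $h$ whose image is a torsion element. This gap cannot be patched in full generality, because the literal statement is false for non-torsion central quotients. For example, let $C=\mathbb{Z}_p\times\mathbb{Z}=\langle c_1\rangle\times\langle c_2\rangle$, $A=\mathbb{Z}_p\times\mathbb{Z}\times\mathbb{Z}=\langle b\rangle\times\langle u\rangle\times\langle v\rangle$, and let $H$ be the central extension of $C$ by $A$ given by the biadditive cocycle $\mu$ with $\mu(b,u)=c_1$, $\mu(u,v)=c_2$ and $\mu=0$ on all other pairs of generators. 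The induced commutator form on $A$ has trivial radical (pairing with $u$ and $v$ kills all three coordinates), so $Z(H)=C$ and $H/Z(H)\cong\mathbb{Z}_p\times\mathbb{Z}\times\mathbb{Z}$; here $e(p)=p$, yet $\mathbb{Z}_p\times\mathbb{Z}_p$ does not embed since the torsion subgroup has order $p$. So the lemma implicitly carries a finiteness (or torsion) hypothesis, and you should state it explicitly at the point where you invoke the primary decomposition of $\overline{h}$; with that hypothesis added, your proof is correct and suffices for every use of the lemma in the paper.
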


\begin{proposition}\label{Pr:Case2}
In a minimal setup, let $\overline{G}=G/\rad\delta$ and assume that
$Z(\overline{G})\le\rad f/\rad\delta$. Then $Z(\overline{G})=\rad
f/\rad\delta=\overline{G}\,'$ is elementary abelian of order $8$ and $\rad f =
\mul\delta$.
\end{proposition}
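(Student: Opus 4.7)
The plan is to collapse the chain
\[
  \overline{G}' \le \mul\delta/\rad\delta \le Z(\overline G) \le \rad f/\rad\delta,
\]
whose last inclusion is the hypothesis and whose earlier inclusions come from Lemmas \ref{Lm:fgh}(ii) and \ref{Lm:RadMul}(ii), and to show the common group is elementary abelian of order~$8$. As a preliminary I would verify that $\overline G$ is a $2$-group: since $\cl{\overline G}\le 2$, $\overline G$ is nilpotent and decomposes as $P_2\times P_{\mathrm{odd}}$; Lemma \ref{Lm:RadfElem} forces $P_{\mathrm{odd}}\le\rad f/\rad\delta$, so $f$ (hence $g$) vanishes on every triple meeting $P_{\mathrm{odd}}$. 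The preimage of $P_2$ in $G$ then supports a nontrivial new setup of strictly smaller order unless $P_{\mathrm{odd}}=1$.

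The core step is $\rad f/\rad\delta\le Z(\overline G)$. For $a\in\rad f$ and $b\in G$, the equation $f(a,b,y) = \delta([a,b],y) = 1$ for all $y\in G$ places $\overline{[a,b]}$ in $\rad_2\overline\delta$; since $[a,b]\in G'\le\mul\delta$ and $\overline\delta$ is multiplicative in each argument on $\mul\delta/\rad\delta$, this upgrades to $\overline{[a,b]}\in\rad_1\overline\delta$. Because $\rad f\unlhd G$, the same reasoning applied to every conjugate $a^z$ places $\overline{[a,b]}^{\overline z}$ in $\rad_1\overline\delta$, so $\overline{[a,b]}\in\rad\overline\delta$; a routine inspection gives $\rad\overline\delta=\{1\}$, so $[\overline a,\overline b]=1$ and $\overline a\in Z(\overline G)$. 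Combined with the hypothesis, $Z(\overline G) = \rad f/\rad\delta$.

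Now $\overline G$ is a $2$-group whose central quotient is elementary abelian of order~$8$, so Lemma \ref{Lm:H} yields $\overline G'$ elementary abelian with $|\overline G'|\ge 4$. On the other hand, the pairing $\overline G'\times V\to\{a\in Z:a^2=1\}$ induced by $\delta$, where $V=\overline G/(\rad f/\rad\delta)\cong\mathbb{F}_2^3$, is left non-degenerate since $\rad\overline\delta=\{1\}$; this embeds $\overline G'$ into $V^*$ and gives $|\overline G'|\le 8$. To eliminate $|\overline G'|=4$, I would invoke the final clause of Lemma \ref{Lm:H}: there exist $u,v,w$ generating $\overline G$ modulo $Z(\overline G)$ with $[u,v][v,w]=[w,u]$ and none of the three commutators trivial. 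Because these commutators lie in $G'\le\mul\delta$, the relation translates into the identity
\[
  f(u,v,y)\,f(v,w,y)\,f(u,w,y) = 1 \quad\text{for every } y\in G.
\]
Playing this against the swap formula $f(z,x,y) = h(z,y,x)\,f(z,y,x)$ of Lemma \ref{Lm:fgh}(i) and the Hall--Witt identity for $h$ should force $g(u,v,w)=1$, contradicting the fact that $u,v,w$ is a basis of $V$ and that $g$ restricts to the nontrivial alternating ``determinant'' on $V$ (Lemma \ref{Lm:fgh}(v)). The reduction of $g(u,v,w)$ to a sum of ``boundary'' $f$-values (those with the third argument equal to one of the first two) and the careful bookkeeping of the six resulting $h$-terms is the main technical obstacle.

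Once $|\overline G'|=8$ the chain collapses to $\overline G' = \mul\delta/\rad\delta = Z(\overline G) = \rad f/\rad\delta$, all equal and elementary abelian of order~$8$, and the conclusion $\mul\delta=\rad f$ is precisely the equality of the second and fourth terms.
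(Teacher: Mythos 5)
Your core step is correct and is genuinely different from the paper's route: for $a\in\rad f$ and $b\in G$ you get $\delta([a,b],y)=1$ for all $y$, and since $[a,b]\in G'\le\mul\delta$ (and, by normality of $\rad f$, the same holds for every conjugate $[a^z,b^z]$), Lemma \ref{Lm:Radicals}(ii) puts $[a,b]$ into $\rad\delta$; hence $\rad f/\rad\delta\le Z(\overline{G})$ holds unconditionally and the hypothesis upgrades it to equality. The paper never proves this inclusion directly; it reaches $Z(\overline{G})=\rad f/\rad\delta$ only at the end of an order-based case analysis on $|Z(\overline{G})|$ using Lemmas \ref{Lm:H} and \ref{Lm:Baer} and the bound $|G|\le 128$. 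Your $2$-group reduction and the bound $|\overline{G}\,'|\le 8$ via the pairing into $V^*$ are also fine (the latter even follows just from $\overline{G}$ being $3$-generated modulo its center and of class two).

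There are, however, two genuine gaps. First, the exclusion of $|\overline{G}\,'|=4$ is not proved: you only assert that the relation $f(u,v,y)f(v,w,y)f(u,w,y)=1$, the swap formula and Hall--Witt ``should force'' $g(u,v,w)=1$, and you call the bookkeeping the main technical obstacle. That is precisely the delicate point, and the paper carries it out concretely: with $u,v,w$ as in Lemma \ref{Lm:H} one writes $[v,w]=[u,v][u,w]r$ with $r\in\rad\delta$, computes $f(v,w,u)=\delta([u,v],u)\delta([u,w],u)=h(u,v,u)h(u,w,u)$ and compares with $h(u,v,w)$ to kill $g(u,v,w)$; specializing your identity at $y=u$ is the right move, but the cancellation must be exhibited, not hoped for. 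Second, and more serious logically, your final sentence is a non sequitur: knowing $|\overline{G}\,'|=8$ and $\overline{G}\,'\le\mul\delta/\rad\delta\le Z(\overline{G})$ does not collapse the chain unless you also bound $|Z(\overline{G})|=|\rad f/\rad\delta|$ above by $8$, and nothing in your argument does this. This is exactly where minimality must be used quantitatively: $|G|\le 128$ (recalled at the start of \S\ref{Sc:MinProp} from the constructions of \cite{DV1}) together with $Z\le\rad\delta$, $|Z|\ge 2$, gives $|\overline{G}|\le 64$, and since $|G/\rad f|=|G/\rad g|=8$ by Lemmas \ref{Lm:RadfElem} and \ref{Lm:Factor}, it follows that $|Z(\overline{G})|=|\overline{G}|/|G/\rad f|\le 8$. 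Without invoking $|G|\le 128$ your conclusions that all four subgroups coincide, are of order $8$, and that $\rad f=\mul\delta$, do not follow; with it (and with the $|\overline{G}\,'|=4$ computation completed) your route does close.
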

\begin{proof}
Let $\overline{\rad f} = \rad f/\rad\delta$ and $\overline{\mul\delta} =
\mul\delta/\rad\delta$. From $Z(\overline{G})\le \overline{\rad f} <
\overline{G}$ we see that $\overline{G}$ is not abelian. Hence
$\cl{\overline{G}}=2$ by Lemma \ref{Lm:fgh}(i), and we have
$1<\overline{G}\,'\le Z(\overline{G})<\overline{G}$.

Recall that we have $|G|\le 128$, and thus $|\overline{G}|\le 64$. By Lemma
\ref{Lm:MRGR}, $|\overline{\mul\delta}|=2k$ for some $k\ge 1$, so
$|\overline{G}|$ is divisible by $16k$, which means that either $\overline{G}$
is a $2$-group or $|\overline{G}|=48=16\cdot 3$. Since $\overline{G}$ is
nilpotent, $\overline{G}\,'$ has to be a $2$-group in any case.

We claim that if $\overline{G}\,'$ is elementary abelian then
$\overline{G}/Z(\overline{G})$ is elementary abelian and $|\overline{G}\,'|\ge
4$. Indeed, we have $[x^2,y]=[x,y]^2=1$ so $\overline{G}/Z(\overline{G})$ is
elementary abelian, its order cannot exceed $8$ (by Lemma \ref{Lm:Factor}),
hence it is equal to $8$ (as $Z(\overline{G})\le\overline{\rad f}$), and so
$|\overline{G}\,'|\ge 4$ follows by Lemma \ref{Lm:H}.

If $|\overline{G}\,'|=2$, we have a contradiction with the claim. We can
therefore assume that $|\overline{G}\,'|\ge 4$, and thus also
$|Z(\overline{G})|\ge 4$.

Suppose for a while that $|Z(\overline{G})|=4$ and
$Z(\overline{G})=\overline{\rad f}$. Then $\overline{G}/Z(\overline{G})\cong
G/\rad f$ is elementary abelian of order $8$, and we are in the situation of
Lemma \ref{Lm:H} with $|\overline{G}\,'|=4$. Let $u$, $v$, $w$ be as in Lemma
\ref{Lm:H}, so $[v,w] = [u,v][u,w]r$ for some $r\in\rad\delta$. We have
$g(u,v,w) \ne 1$, and we can assume, say, $f(v,w,u) \ne 1$. Now, $f(u,v,u) =
h(u,v,u)$ and $f(u,w,u) = h(u,w,u)$. That means that $f(v,w,u) =
\delta([v,w],u) = \delta([u,v][u,w]r,u) = \delta([u,v],u)\delta([u,w],u)=
h(u,v,u)h(u,w,u) = [u,[v,u]][u,[w,u]] = [u,[v,w]] = h(u,v,w)$. However, that
yields  $g(u,v,w) = f(v,w,u)h(u,v,w) = 1$, a contradiction.

Now suppose that $|Z(\overline{G})|=4$ and $Z(\overline{G})<\overline{\rad f}$.
Then $|\overline{\rad f}|=8$ and $|\overline{G}/Z(\overline{G})|>8$, so
$\overline{G}/Z(\overline{G})$ cannot be elementary abelian by Lemma
\ref{Lm:Factor}. But $\overline{G}/\overline{\rad f}$ is elementary abelian,
$|\overline{\rad f}/Z(\overline{G})|=2$, a contradiction with Lemma
\ref{Lm:Baer}.

It remains to consider the situation $|\overline{G}\,'|\ge 4$,
$|Z(\overline{G})|=8$, $Z(\overline{G})=\overline{\rad f}$. Then
$\overline{G}/Z(\overline{G})\cong G/\rad f$ is elementary abelian of order
$8$. If $|\overline{G}\,'|=4$, we reach a contradiction by Lemma \ref{Lm:H} as
above. We therefore have $\overline{G}\,'=Z(\overline{G})$. Since
$G'\le\mul\delta$, we have $\overline{G}\,'\le \overline{\mul\delta}$, and
$\mul\delta=\rad f$ follows. As $\overline{G}/\overline{G}\,'$ is elementary
abelian, we must have $1=[x^2,y] = [x,y]^2$, which shows that $\overline{G}\,'$
itself is elementary abelian.
\end{proof}

\subsection{$Z(G/\rad\delta)$ is not a subgroup of $\rad f/\rad\delta$}

\begin{proposition}\label{Pr:CaseIII}
In a minimal setup, suppose that $Z(G/\rad\delta)$ is not contained in $\rad
f/\rad\delta$. Then $|\rad\delta/Z|\ge 4$. Moreover, if $|\rad\delta/Z|=4$ then
$\rad\delta/Z$ is the Klein group.
\end{proposition}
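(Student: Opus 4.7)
The plan is to study a commutator homomorphism induced by a witness of the hypothesis. Choose $t \in G$ with $\overline{t} \in Z(\overline{G}) \setminus \rad f/\rad\delta$, where $\overline{G} = G/\rad\delta$. Since $t \notin \rad f$ while $[t, y] \in \rad\delta$ for every $y \in G$ (killing the first two arguments of $f$), there must exist $a, b \in G$ with $f(a, b, t) \ne 1$; then $g(a, b, t) = f(a, b, t) \ne 1$ since the other two cyclic terms vanish, so by Lemma \ref{Lm:Reduction} we have $G = \langle a, b, t\rangle Z$.

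Introduce the commutator map $\phi_t: G/Z \to \rad\delta/Z$, $xZ \mapsto [t, x]Z$. Since $[G, G'] \le Z$ by Lemma \ref{Lm:fgh}(i), and commutators between elements of $\rad\delta \cap G'$ lie in $[G', G'] \le [G, G'] \le Z$, the map $\phi_t$ is a well-defined group homomorphism, and its image is generated by $[t, a]Z$ and $[t, b]Z$. Specialising \eqref{Eq:D3} to $y = t$ and combining with Lemma \ref{Lm:fgh}(i) and the class-$3$ congruence $[t^{-1}, y^{-1}] \equiv [t, y] \pmod Z$ yields the key identity
\begin{displaymath}
    f(x, y, t) = [x, [t, y]] \quad \text{for all } x, y \in G.
\end{displaymath}
Taking $x = y = a$ gives $[a, [t, a]] = f(a, a, t) = 1$, and specialising \eqref{Eq:D3} to $z = t$ instead yields $[t, G'] = 1$: that is, $t$ centralizes the derived subgroup.

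The first claim reduces to a short case analysis on the elements $[t, a]Z$ and $[t, b]Z$. If $[t, b] \in Z$, then $f(a, b, t) = [a, [t, b]] = 1$, a contradiction; by the skew-symmetry $f(a, b, t) = f(b, a, t)^{-1}$, we similarly obtain $[t, a] \notin Z$. If $[t, a]Z = [t, b]Z$, writing $[t, b] = [t, a]z$ for $z \in Z$ yields $f(a, b, t) = [a, [t, a]z] = [a, [t, a]] = 1$, again a contradiction. Hence $[t, a]Z$ and $[t, b]Z$ are distinct nontrivial elements, and their product $[t, ab]Z = [t, a]Z \cdot [t, b]Z$ must also be nontrivial, for otherwise $[a, [t, ab]] = 1$ would contradict $f(a, ab, t) = f(a, b, t) \ne 1$. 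Thus the image of $\phi_t$ contains four distinct cosets, so $|\rad\delta/Z| \ge 4$.

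For the second claim, assume $|\rad\delta/Z| = 4$. Then the image of $\phi_t$ equals all of $\rad\delta/Z$, so $\rad\delta = \langle [t, a], [t, b]\rangle Z$. An arithmetic check (using $|G| \le 128$, $|G/\rad f| = 8$ from Lemma \ref{Lm:Factor}, $|Z|$ even, $|\mul\delta/\rad\delta|$ even) forces $|Z| = 2$ and $\rad f = \mul\delta = G'\rad\delta$. Since $t$ centralizes $Z$ and $G' \ni [t, a], [t, b]$, it centralizes all of $\rad\delta$; combined with $[t, G'] = 1$ this gives $[t, \rad f] = [t, G'\rad\delta] = 1$. Because $a^2 \in \rad f$, we deduce $[t, a^2] = 1$, which by the class-$3$ expansion $[t, a^2] = [t, a]^2 [[t, a], a] = [t, a]^2$ (using $[a, [t, a]] = 1$) forces $[t, a]^2 = 1$. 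Symmetrically $[t, b]^2 = 1$, so $\rad\delta/Z$ is generated by two elements of order $2$, and being of order $4$ it is the Klein group.

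The main obstacle is establishing the identity $f(x, y, t) = [x, [t, y]]$ and the centralization $[t, G'] = 1$; once these are in hand, both claims follow by a cascade of centralizations, with the case analysis becoming essentially mechanical.
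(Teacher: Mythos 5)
Your proof is correct, but it follows a genuinely different route from the paper's. You work directly with the witnesses $a,b,t$ of $f(a,b,t)\ne 1$: the identity $f(x,y,t)=[x,[t,y]]$ (which is Lemma \ref{Lm:fgh}(i) combined with $f(x,t,y)=\delta([x,t],y)=1$) and the centralization $[t,G']=1$ obtained from \eqref{Eq:D3} with $z=t$ give the lower bound $|\rad\delta/Z|\ge 4$ via the four cosets $Z,[t,a]Z,[t,b]Z,[t,ab]Z$; then you exclude $\mathbb{Z}_4$ by the counting $128\ge|G|\ge 8\cdot 2\cdot 4\cdot|Z|$, which forces $|Z|=2$ and $\rad f=\mul\delta=G'\rad\delta$, whence $t$ centralizes $\rad f\ni a^2,b^2$ and $[t,a]^2=[t,a^2]=1$, $[t,b]^2=1$. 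The paper instead chooses auxiliary elements $u,v$ in the kernel of the homomorphism $\delta([x,y],-):G/\rad f\to\im f$ and proves the stronger claim \eqref{Eq:Claim} that no power of $[a,u]$ is congruent to $[a,v]$ modulo $Z$ (and vice versa); this single claim yields both $|\rad\delta/Z|\ge 4$ and the exclusion of the cyclic group of order $4$ without invoking the order count, and its explicit data ($[a,u],[a,v]$ generating $\rad\delta/Z$, $[u,v]\notin\rad\delta$) is reused later in Lemma \ref{Lm:GModZCaseIII}. Your approach buys a shorter, more conceptual argument (the homomorphism $\phi_t$ and $[t,G']=1$ are cleaner than the power manipulations with $h$), at the cost of importing the $|G|\le 128$ arithmetic that the paper only deploys in Theorem \ref{Th:Scenarios}; since that bound is available in any minimal setup and none of the quoted lemmas depend on this proposition, there is no circularity. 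Two micro-steps are worth writing out: distinctness of $[t,ab]Z$ from $[t,a]Z$ and $[t,b]Z$ (immediate by cancellation, since $[t,a],[t,b]\notin Z$), and the equality $\mul\delta=G'\rad\delta$, which besides the index count $[\mul\delta:\rad\delta]=2$ needs the observation that $G'\not\le\rad\delta$ (otherwise $f$, hence $g$, would be trivial).
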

\begin{proof}
We will use Lemma \ref{Lm:fgh} freely in this proof. Let
$\overline{G}=G/\rad\delta$ and let $a\in G\setminus\rad f$ be such that
$a\rad\delta\in Z(\overline{G})$. Then $[a,r]\in\rad\delta$ for every $r\in G$,
and thus $f(r,a,s)^{-1} = f(a,r,s) = \delta([a,r],s) = 1$ for every $r$, $s\in
G$. Fix $x$, $y\in G$ such that $f(x,y,a)\ne 1$, and note that
$g(x,y,a)=f(x,y,a)\ne 1$. Also fix $b=[x,y]$.

Then $\delta(b,a) = f(x,y,a)\ne 1$. Moreover, $\delta(b,rs) =
\delta(b,r)\delta(b,s)$ for every $r$, $s\in G$, since $G'\le\mul\delta$. If
$s\in\rad f$, we get $\delta(b,rs) = \delta(b,r)\delta(b,s) =
\delta(b,r)f(x,y,s) = \delta(b,r)$. We thus consider $\delta(b,-)$ as a
homomorphism $G/\rad f\to \im f$. Since $|\im f|=2$ by Lemma \ref{Lm:RadfElem}
and $|G/\rad f|=8$, $\delta(b,-)$ has kernel of size $4$.

Fix $u$, $v\in G$ such that $\langle u,v,a\rangle\rad f = G$ and
$\delta(b,u)=\delta(b,v)=1$. We claim that $[u,v]\not\in\rad\delta$. Indeed,
should $[u,v]\in\rad\delta$, then $[r,s]\in\rad\delta$ for every $r$,
$s\in\{u,v,a\}$, and so $f(r,s,t)=\delta([r,s],t)=1$ for every $r$, $s$, $t\in
G$, a contradiction.

By Lemma \ref{Lm:Reduction}, $G = \langle x, y, a\rangle Z$ and so $G=\langle
x, y, a\rangle\rad \delta$, too. The group $\overline{G}\,'$ is then generated
by $\{[x,y]\rad\delta$, $[a,x]\rad\delta$, $[a,y]\rad\delta\} =
\{[x,y]\rad\delta\}$ and is therefore cyclic. Since $[u,v]\not\in\rad\delta$,
we have $[u,v]\rad\delta = [x,y]^m\rad\delta$ for some $m$. Then $f(u,v,u) =
\delta([u,v],u) = \delta([x,y]^m,u) = \delta(b,u)^m = 1$ and, similarly,
$f(u,v,v) = 1$. It follows that $f(r,s,t)=1$ for all $r$, $s$, $t\in\{u,v,a\}$,
except possibly for $f(u,v,a)=f(v,u,a)$. But then we must have
$f(u,v,a)=f(v,u,a)\ne 1$, else $f$ is trivial.

We claim that $[a,u^i]= [a,u]^i$ for every $i$. The claim is certainly true for
$i=1$. The group identity $[r,st] = [r,t][r,s][[r,s],t]$ yields $[a,u^iu] =
[a,u][a,u^i][[a,u^i],u]$, so it suffices to show that $[[a,u^i],u]=1$. Now,
$[[a,u^i],u] = [u,[a,u^i]]^{-1} = h(u,a,u^i)^{-1} = h(u,a,u)^{-i}$, and
$h(u,a,u) = h(u,a,u)f(u,a,u) = f(u,u,a) = 1$. Similarly, $[a,v^i]=[a,v]^i$ for
every $i$.

We also claim that
\begin{equation}\label{Eq:Claim}
    [a,u]^iZ \ne [a,v]Z\text{ and }[a,v]^iZ\ne [a,u]Z\text{ for every $i$}.
\end{equation}
Indeed, if $[a,u]^i=[a,v]z$ for some $z\in Z$, we have $1=h(u,a,u) = h(u,a,u)^i
= h(u,a,u^i) = [u,[a,u^i]] = [u,[a,v]z] = [u,[a,v]] = f(u,v,a)f(u,a,v) =
f(u,v,a)\ne 1$, a contradiction. The other case is similar.

By \eqref{Eq:Claim}, $[a,u]\not\in Z$, $[a,v]\not\in Z$, and $[a,v]Z\ne
[a,u]Z$. If $[a,u]^2\not\in Z$, \eqref{Eq:Claim} yields $|\rad\delta/Z|\ge 4$,
as desired. If $[a,u]^2\in Z$ then $|\rad\delta/Z|$ is even, and hence also
$|\rad\delta/Z|\ge 4$.

Finally, let us assume that $\rad\delta/Z$ is a cyclic group of order $4$. Then
one of $[a,u]Z$, $[a,v]Z$ generates $\rad\delta/Z$, a contradiction with
\eqref{Eq:Claim}.
\end{proof}

\begin{lemma}\label{Lm:GModZCaseIII}
In a minimal setup, suppose that $Z(G/\rad\delta)$ is not a subgroup of $\rad
f/\rad\delta$. Let $K=G/Z$. Then $|K'|\ge 8$. If $|\rad f/Z|=8$ then
$K'=Z(K)=\rad f/Z$.
\end{lemma}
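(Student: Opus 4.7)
The plan is to exploit the specific elements $a$, $x$, $y$, $u$, $v$ produced in the proof of Proposition \ref{Pr:CaseIII}. Recall from that proof that $a\rad\delta \in Z(G/\rad\delta)$, $b = [x,y]$ satisfies $\delta(b,a) = f(x,y,a) \ne 1$, and $u, v$ can be chosen so that $[a,u], [a,v] \not\in Z$ and $[a,u]^iZ \ne [a,v]Z$ for every integer $i$. These data will let me exhibit three elements of $K'$ generating a subgroup of order at least $8$.

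First I would pass from $G$ to $K = G/Z$ and observe that, since $\bar a$ is central in $G/\rad\delta$, both $[a,u]$ and $[a,v]$ lie in $\rad\delta$, so their images in $K$ lie in $\rad\delta/Z \le K'$. A routine abelian group argument---two nontrivial elements with neither in the cyclic subgroup generated by the other generate a subgroup of order at least $4$---gives $|\langle \overline{[a,u]}, \overline{[a,v]}\rangle| \ge 4$. Next I would adjoin $\bar b$: because $\delta(b,a)\ne 1$ and $\rad\delta \le \rad_2\delta$, we have $b \not\in \rad\delta$, so $\bar b$ lies in $K'$ but outside $\rad\delta/Z$, in particular outside the subgroup just built. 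Adjoining $\bar b$ at least doubles the order, yielding $|K'| \ge 8$.

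For the second assertion, $K' = G'Z/Z \le \mul\delta/Z \le \rad f/Z$, and the hypothesis $|\rad f/Z|=8$ combined with $|K'|\ge 8$ forces $K' = \rad f/Z$. Lemma \ref{Lm:fgh}(i) gives $[G,G'] \le Z$, hence $K' \le Z(K)$; the remaining inclusion $Z(K) \le \rad f/Z$ is the only subtlety. Given $g \in G$ with $[g,G]\le Z$, I would verify $g \in \rad f$ by checking each condition: $f(g,x,y) = \delta([g,x],y) = 1$ and $f(x,g,y) = \delta([x,g],y) = 1$ are immediate since $[g,x], [x,g] \in Z \le \rad\delta$. For $f(x,y,g)$ I would invoke the identity $f(z,x,y) = h(z,y,x)f(z,y,x)$ from Lemma \ref{Lm:fgh}(i), which specializes to $f(x,y,g) = [x,[g,y]]\cdot f(x,g,y)$; the first factor vanishes because $[g,y]\in Z\le Z(G)$, and the second was just established.

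The main obstacle is the first step: I must confirm that $[a,u]$ and $[a,v]$ remain independent in $K' = G'Z/Z$ after quotienting by $Z$, which relies crucially on the non-obvious conditions $[a,u]^iZ \ne [a,v]Z$ already established in the proof of Proposition \ref{Pr:CaseIII}, rather than only on the weaker fact that their images lie in $\rad\delta/Z$. Everything else is a short computation with the identities in Lemma \ref{Lm:fgh}.
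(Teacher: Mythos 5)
Your proof is correct and follows essentially the same route as the paper: it reuses the elements and facts from the proof of Proposition \ref{Pr:CaseIII} to exhibit a subgroup of $K'$ of order at least $8$ inside (and just outside) $\rad\delta/Z$, and then derives $K'=Z(K)=\rad f/Z$ from $G'\le\rad f$, $[G,G']\le Z$, and the computation $f(x,y,g)=[x,[g,y]]f(x,g,y)=1$ for $g$ central modulo $Z$. The only cosmetic difference is your choice of the third commutator $b=[x,y]$ (with $\delta(b,a)\ne 1$ forcing $b\notin\rad\delta$) where the paper uses $[u,v]\notin\rad\delta$; both witnesses come from the same earlier proof and work equally well.
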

\begin{proof}
With the notation of the proof of Proposition \ref{Pr:CaseIII}, $\rad\delta/Z$
is the Klein group generated by $[a,u]Z$, $[a,v]Z$, and the commutator $[u,v]Z$
does not belong to $\rad\delta$, which implies $|K'|\ge 8$. For the rest of the
proof assume that $|\rad f/Z|=8$.

As $G'\le\mul\delta\le\rad f$ and $|K'|\ge 8$, we must have $K'\ge \rad f/Z$.
On the other hand, $G/\rad f\cong (G/Z)/(\rad f/Z)$ is an abelian group by
Lemmas \ref{Lm:Factor} and \ref{Lm:RadfElem}, so $K'=\rad f/Z$.

Since $\cl{K}=2$ by Lemma \ref{Lm:fgh}, we have $K'\le Z(K)$. For the other
inclusion, let $r\in Z(K)$. Then $[r,s]\in Z$ for every $s\in G$, so $f(s,r,t)
= f(r,s,t) = \delta([r,s],t) = 1$ and $f(s,t,r) = [s,[r,t]]f(s,r,t) =
[s,[r,t]]=1$ for every $r$, $t\in G$. Hence $r\in \rad f/Z = K'$.
\end{proof}

\section{The minimal setups}\label{Sc:Min}

We will need the following two lemmas concerning small $2$-groups:

\begin{lemma}\label{Lm:K}
Let $K$ be a group such that
\begin{equation}\label{Eq:GroupK}
    |K|=64,\,|K'|=8,\text{ and }K'=Z(K).
\end{equation}
Then both $K'$ and $K/K'$ are elementary abelian of order $8$. In addition,
there are $e_1$, $e_2$, $e_3\in K$ such that $\{e_1K'$, $e_2K'$, $e_3K'\}$ is a
basis of $K/K'$, and $\{[e_1,e_2]$, $[e_1,e_3]$, $[e_2,e_3]\}$ is a basis of
$K'$.
\end{lemma}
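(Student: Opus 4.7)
The plan is to first establish elementary abelianness of both $K/K'$ and $K'$, and then to produce the desired basis via bilinearity of the commutator map. The two main observations are that Baer's lemma (Lemma \ref{Lm:Baer}) pins down the exponent of $K/K'$, and that once $K/K'$ is elementary abelian the class-two hypothesis forces $K'$ to be elementary abelian too.

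First, I would note that $K' = Z(K)$ makes $K/K' = K/Z(K)$ abelian of order $8$. By Baer's lemma, if $e(2)$ is the exponent of this $2$-group, then $\mathbb{Z}_{e(2)} \times \mathbb{Z}_{e(2)} \le K/K'$. Since $e(2)^2 \le 8$ and $e(2)$ is a power of $2$, this forces $e(2) \le 2$, so $K/K'$ is elementary abelian of order $8$.

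Next, $K' \le Z(K)$ gives $\cl{K} \le 2$, so in particular $[x^2, y] = [x, y]^2$ for all $x, y \in K$. By the previous step, $x^2 \in K' = Z(K)$, hence $[x^2, y] = 1$. Therefore $[x, y]^2 = 1$ for all $x, y$, and since $K'$ is generated by commutators it has exponent $2$ and is elementary abelian of order $8$.

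For the basis statement, I would pick any $e_1, e_2, e_3 \in K$ whose cosets form an $\mathbb{F}_2$-basis of $K/K'$. Writing arbitrary $x, y \in K$ as $x = e_1^{a_1} e_2^{a_2} e_3^{a_3} k$ and $y = e_1^{b_1} e_2^{b_2} e_3^{b_3} k'$ with $k, k' \in K' \le Z(K)$, the class-two bilinearity of the commutator yields
\begin{displaymath}
    [x, y] = \prod_{i<j} [e_i, e_j]^{a_i b_j - a_j b_i}.
\end{displaymath}
Hence $K'$ is generated by the three elements $[e_1, e_2]$, $[e_1, e_3]$, $[e_2, e_3]$, and since $K' \cong \mathbb{F}_2^3$, three generators must be linearly independent, so they form a basis. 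I do not foresee any serious obstacle: the argument is a short chain of standard class-two manipulations, and the only step that is not immediate is the invocation of Baer's lemma to pin down the exponent of $K/K'$.
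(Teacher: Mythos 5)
Your proof is correct and follows essentially the same route as the paper: Baer's lemma forces $K/K'=K/Z(K)$ to be elementary abelian, the class-two identity $[x,y]^2=[x,y^2]=1$ handles $K'$, and the basis claim follows from bilinearity of the commutator. The only difference is that you spell out the final step (which the paper dismisses as "clear"), even noting that any lifts of any basis of $K/K'$ will do, which is a slightly stronger statement and perfectly sound.
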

\begin{proof}
The group $K/K' = K/Z(K)$ is elementary abelian, else $|K/Z(K)|>8$ by Lemma
\ref{Lm:Baer}, a contradiction. For all $x$, $y \in K$ we then get $[x,y]^2 =
[x,y^2]=1$, and so $K'$ is elementary abelian too. The rest is clear.
\end{proof}

\begin{lemma}\label{Lm:G128}
Let $G$ be a group such that $|G|=128$ and $\cl{G}=2$. Then $|G'|\le 8$. If
also $|G'|=8$ then $|Z(G)|\le 16$.
\end{lemma}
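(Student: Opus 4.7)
The plan is a case analysis on $|Z(G)|$, using the commutator identity $[x^n,y]=[x,y]^n$ valid in any class-two group together with the inclusion $G'\le Z(G)$. Since $\cl{G}=2$, the group $G$ is nonabelian, so $G/Z(G)$ is noncyclic and $|G{:}Z(G)|\ge 4$, giving $|Z(G)|\le 32$. I will treat $|Z(G)|\in\{32,16,\le 8\}$ separately.

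\emph{If $|Z(G)|=32$:} $G/Z(G)$ is the Klein four group, so for any $x$, $y\in G$ whose images generate it, $G'=\langle[x,y]\rangle$ and $[x,y]^2=[x^2,y]=1$; hence $|G'|\le 2$. \emph{If $|Z(G)|=16$:} Lemma~\ref{Lm:Baer} applied to the abelian group $G/Z(G)$ of order $8$ forces its exponent $e$ to satisfy $e^2\le 8$, hence $e\le 2$, and $G/Z(G)$ is elementary abelian. Lifting a basis to $e_1$, $e_2$, $e_3\in G$, the subgroup $G'$ is generated by the three commutators $[e_i,e_j]$, each of which squares to $1$ by the class-two identity, so $|G'|\le 8$. \emph{If $|Z(G)|\le 8$:} $|G'|\le|Z(G)|\le 8$ directly.

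This proves the first assertion. For the second, equality $|G'|=8$ rules out the case $|Z(G)|=32$, leaving $|Z(G)|\in\{16,8\}$, both of which satisfy $|Z(G)|\le 16$, as required. I do not foresee a serious obstacle; the main step is the invocation of Lemma~\ref{Lm:Baer}, which makes the middle case tractable, after which the class-two commutator identity forces the exponent of $G'$ tightly in each subcase.
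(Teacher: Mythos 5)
Your proof is correct and is essentially the paper's argument: both rest on viewing the commutator as a bilinear map on $G/Z(G)$ (using $\cl{G}=2$, so $G'\le Z(G)$) and on the identity $[x^2,y]=[x,y]^2$ to bound $|G'|$ by a count of generating commutators of order at most two, with the second assertion reduced to excluding $|G/Z(G)|=4$. The only cosmetic differences are that you organize the cases by $|Z(G)|$ instead of arguing by contradiction from $|G'|>8$, and you invoke Lemma~\ref{Lm:Baer} to rule out $G/Z(G)\cong\mathbb{Z}_4\times\mathbb{Z}_2$, whereas the paper disposes of that case (and the order-$4$ case) by the same direct commutator count.
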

\begin{proof}
The commutator can be seen as a bilinear mapping $G/Z(G)\to G'$. The group
$G/Z(G)$ cannot be cyclic. Suppose for a while that $|G'|>8$. Then $|G/Z(G)|\le
8$. Assume that $G/Z(G)$ is elementary abelian of order $8$. Then there are
$e_1$, $e_2$, $e_3\in G$ such that every commutator is of the form
$[e_1^{a_1}e_2^{a_2}e_3^{a_3},e_1^{b_1}e_2^{b_2}e_3^{b_3}]$, which is a product
of $[e_1,e_2]$, $[e_1,e_3]$ and $[e_2,e_3]$ thanks to $\cl{G}=2$. Since
$[e_i,e_j]^2 = [e_i,e_j^2] = 1$ by $e_j^2\in Z(G)$, we see that $G'\le Z(G)$ is
an elementary abelian $2$-group generated by three elements, so $|G'|\le 8$, a
contradiction. We can argue similarly when $G/Z(G)\cong \mathbb Z_4\times
\mathbb Z_2$ or when $G/Z(G)$ is elementary abelian of order $4$.

Hence $|G'|\le 8$. Suppose that $|G'|=8$. To show that $|Z(G)|\le 16$, it
suffices to prove that $G/Z(G)\cong\mathbb Z_2\times\mathbb Z_2$ is impossible.
This is once again easy.
\end{proof}

We can now summarize our results on minimal setups:

\begin{theorem}\label{Th:Scenarios}
Let $G$, $Z$, $\delta$ be a minimal setup. Then $|G|$ is even and $|G|\ge 128$.
If $|G|=128$ then $|Z|=2$, $\rad g = \rad f = \mul\delta = G'Z$,
$\mul\delta/\rad\delta$ is an elementary abelian $2$-group, $G/\rad g$ is an
elementary abelian group of order $8$, $K=G/Z$ satisfies \eqref{Eq:GroupK}, and
one of the following scenarios holds, with $\overline{G}=G/\rad\delta$:
\begin{enumerate}
\item[(i)] $\cl{G}{=}2$, $G'<\mul\delta = Z(G)$, $Z(\overline{G}) = \rad
    f/\rad\delta = \overline{G}\,'$, $|\mul\delta/\rad\delta|=8$,
    $\rad\delta=Z$, $f=g$, or
\item[(ii)] $\cl{G}{=}3$, $Z=[G,G']$, $G'=\mul\delta$, $Z(\overline{G}) =
    \rad f/\rad\delta = \overline{G}\,'$, $|\mul\delta/\rad\delta|=8$,
    $\rad\delta=Z$, or
\item[(iii)] $\cl{G}{=}3$, $Z=[G,G']\le G'=\mul\delta$, $Z(\overline{G})$
    is not a subgroup of $\rad f/\rad\delta$, $|\mul\delta/\rad\delta|=2$,
    $\rad\delta/Z$ is the Klein group.
\end{enumerate}
\end{theorem}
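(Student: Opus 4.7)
The plan is to assemble the structural lemmas of \S\ref{Sc:MinProp} into a size bound on $|G|$ and then sort out the three scenarios. That $|G|$ is even is Lemma~\ref{Lm:MRGR}, and $|Z|\ge 2$ is even by Lemma~\ref{Lm:ZCyclic}. The inclusions~\eqref{Eq:Inclusions}, together with Lemmas~\ref{Lm:Factor} and~\ref{Lm:RadfElem}, yield $|G/\rad f|=|G/\rad g|=8$, $|\im f|=2$, and $\mul\delta\le\rad f=\rad g$.

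To bound $|G|$ I split on whether $Z(\overline{G})\le\rad f/\rad\delta$, with $\overline{G}=G/\rad\delta$. In Case~A, Proposition~\ref{Pr:Case2} gives $\mul\delta=\rad f$ and $|\mul\delta/\rad\delta|=|\overline{G}\,'|=8$, so $|G|=|G/\rad f|\cdot|\mul\delta/\rad\delta|\cdot|\rad\delta|\ge 8\cdot 8\cdot 2=128$, with equality forcing $\rad\delta=Z$ and $|Z|=2$. In Case~B, Proposition~\ref{Pr:CaseIII} gives $|\rad\delta/Z|\ge 4$ (equality making $\rad\delta/Z$ the Klein group), Lemma~\ref{Lm:MRGR} supplies $|\mul\delta/\rad\delta|\ge 2$, and $\mul\delta\le\rad f$; thus $|G|\ge 8\cdot 2\cdot 4\cdot 2=128$, with equality forcing $|Z|=2$, $|\rad\delta/Z|$ Klein of order $4$, $|\mul\delta/\rad\delta|=2$, and $\mul\delta=\rad f$.

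Now assume $|G|=128$. In Case~A the identity $\mul\delta/\rad\delta=\overline{G}\,'$ immediately yields $\mul\delta=G'\rad\delta=G'Z$ and makes $\mul\delta/\rad\delta$ elementary abelian of order $8$; in Case~B the witnesses $u,v,a$ from the proof of Proposition~\ref{Pr:CaseIII} show that $[a,u]Z,[a,v]Z,[u,v]Z$ span an order-$8$ subgroup of $G'Z/Z$ lying inside $\rad f/Z$, giving $\mul\delta=G'Z=\rad f$, with $|\mul\delta/\rad\delta|=2$ trivially elementary abelian. Thus $K=G/Z$ has $|K'|=|\rad f/Z|=8$; $K'\le Z(K)$ follows from $\cl{G/Z}\le 2$ (Lemma~\ref{Lm:fgh}(i)), while $Z(K)\le K'$ is Lemma~\ref{Lm:GModZCaseIII} in Case~B and a short commutator computation in Case~A (any $r\in G$ with $rZ\in Z(K)$ has $[r,s]\in Z\le Z(G)$, making $f(r,s,t)$, $f(s,r,t)$, and $h(s,r,t)$ all trivial, hence $r\in\rad f$). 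This establishes~\eqref{Eq:GroupK}.

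Finally I sort by $\cl G\in\{2,3\}$. In Case~A with $\cl G=2$: $G'\le Z(G)$ gives $\mul\delta\le Z(G)$, while $Z(G)/\rad\delta\le Z(\overline{G})=\mul\delta/\rad\delta$ gives the reverse inclusion, so $\mul\delta=Z(G)$; the strict inequality $G'<\mul\delta$ comes from Lemma~\ref{Lm:G128}, which caps $|G'|$ at $8<16=|\mul\delta|$; and $[G,G']=1$ makes $f$ symmetric in its last two arguments via~\eqref{Eq:D3}, which together with $f(x,y,z)=f(y,x,z)^{-1}$ yields $f=g$, i.e.\ scenario~(i). In Case~A with $\cl G=3$: $1\ne[G,G']\le Z$ forces $Z=[G,G']\le G'$, hence $\mul\delta=G'$, which is scenario~(ii). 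Case~B is incompatible with $\cl G=2$: $f$ symmetric in its last two arguments would force any witness $a\in G\setminus\rad f$ with $a\rad\delta\in Z(\overline{G})$ to satisfy $f(x,y,a)=f(y,x,a)^{-1}=f(y,a,x)^{-1}=f(a,y,x)=\delta([a,y],x)=1$, a contradiction; hence $\cl G=3$, $Z=[G,G']\le G'=\mul\delta$, which is scenario~(iii). The delicate points I expect are the identification $\mul\delta=G'Z$ in Case~B (squeezing the three commutator classes into the eight cosets of $Z$ in $\rad f$) and the reverse inclusion $Z(K)\le K'$ in Case~A; Lemma~\ref{Lm:K} is not needed for the theorem itself and presumably enters \S\ref{Sc:Min} during the enumeration of minimal setups.
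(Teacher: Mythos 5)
Your proposal is correct and takes essentially the same route as the paper: the same case split on whether $Z(\overline{G})\le\rad f/\rad\delta$, the same counting via Propositions \ref{Pr:Case2} and \ref{Pr:CaseIII} together with Lemmas \ref{Lm:Factor}, \ref{Lm:RadfElem}, \ref{Lm:MRGR} to force $|G|\ge 128$ and pin down $|Z|=2$, $\rad\delta$, $\mul\delta=\rad f=G'Z$ and \eqref{Eq:GroupK}, followed by sorting the scenarios according to $\cl{G}\in\{2,3\}$. The only genuine local deviations are sound variants: you exclude $\cl{G}=2$ in the third scenario by the direct symmetry argument on $f$ using the central witness $a$ (where the paper instead derives \eqref{Eq:NecessaryCaseIII} from a basis and notes it contradicts $[G,G']=1$), and you obtain $Z(G)=\mul\delta$ in scenario (i) from $Z(G)/Z\le Z(\overline{G})=\mul\delta/Z$ rather than from the bound $|Z(G)|\le 16$ in Lemma \ref{Lm:G128}, which you still invoke for $|G'|\le 8$.
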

\begin{proof}
We know that $|G|=128$ can occur thanks to the examples constructed already in
\cite{DV1}. Assume that $|G|\le 128$. Then $G/\rad g$ is an elementary abelian
group of order $8$ by Lemma \ref{Lm:Factor}, $|\mul\delta/\rad\delta|\ge 2$ by
Lemma \ref{Lm:MRGR}, and $\rad g = \rad f$ by Lemma \ref{Lm:RadfElem}. Let
$K=G/Z$.

If $Z(\overline{G})\le\rad f/\rad\delta$ then $Z(\overline{G})=\rad
f/\rad\delta=\overline{G}\,'$ is elementary abelian of order $8$ and $\rad
f=\mul\delta$ by Proposition \ref{Pr:Case2}. This implies $\rad\delta=Z$,
$|Z|=2$, and $|G|=128$. By Proposition \ref{Pr:Case2} again, $K$ satisfies
\eqref{Eq:GroupK}.

If $Z(\overline{G})$ is not a subgroup of $\rad f/\rad\delta$ then
$|\rad\delta/Z|\ge 4$ by Proposition \ref{Pr:CaseIII} and hence $\rad f =
\mul\delta$, $|\mul\delta/\rad\delta|=2$, $|\rad\delta/Z|=4$, $|Z|=2$ and
$|G|=128$. By Proposition \ref{Pr:CaseIII}, $\rad\delta/Z$ is then the Klein
group. Since $|\rad f/Z|=8$, Lemma \ref{Lm:GModZCaseIII} yields
\eqref{Eq:GroupK}.

In either case, let $M=\rad g = \rad f = \mul\delta$. As \eqref{Eq:GroupK}
holds, there are $e_1$, $e_2$, $e_3\in G$ such that $\{e_1M$, $e_2M$, $e_3M\}$
is a basis of $G/M$, and $\{[e_1,e_2]Z$, $[e_1,e_3]Z$, $[e_2,e_3]Z\}$ is a
basis of $M/Z$. Using $|Z|=2$ and Lemma \ref{Lm:fgh}, we have
\begin{align*}
    f(e_2,e_3,e_1) &= [e_2,[e_1,e_3]]f(e_2,e_1,e_3) = [e_2,[e_1,e_3]]f(e_1,e_2,e_3),\\
    f(e_3,e_1,e_2) & = f(e_1,e_3,e_2) = [e_1,[e_2,e_3]]f(e_1,e_2,e_3).
\end{align*}
Thus
\begin{equation}\label{Eq:gForcesf}
    1\ne g(e_1,e_2,e_3) = [e_2,[e_1,e_3]][e_1,[e_2,e_3]]f(e_1,e_2,e_3).
\end{equation}
If $Z(\overline{G})$ is not a subgroup of $\rad f/\rad\delta$ then
$|\rad\delta/Z|=4$ and we can assume without loss of generality that
$[e_1,e_2]\rad\delta = [e_1,e_3]\rad\delta$. Then
\begin{multline*}
    f(e_1,e_2,e_3) = \delta([e_1,e_2],e_3) = \delta([e_1,e_3],e_3) =
    f(e_1,e_3,e_3)\\
    = f(e_3,e_1,e_3) = [e_3,[e_3,e_1]]f(e_3,e_3,e_1) =
    [e_3,[e_3,e_1]]\delta([e_3,e_3],e_1) = [e_3,[e_3,e_1]]
\end{multline*}
and therefore
\begin{equation}\label{Eq:NecessaryCaseIII}
    1\ne g(e_1,e_2,e_3) = [e_2,[e_1,e_3]][e_1,[e_2,e_3]][e_3,[e_3,e_1]].
\end{equation}

We have $G'\le\mul\delta$, $Z\le\mul\delta$, so $G'Z\le\mul\delta$. Since
$|Z|=2$ and $[G,G']\le Z$, the following three conditions are equivalent:
$[G,G']<Z$, $[G,G']=1$, $\cl{G}=2$.

Suppose that $\cl{G}=2$. Then $Z(\overline{G})\le \rad f/\rad\delta$, since the
other alternative implies \eqref{Eq:NecessaryCaseIII}, a contradiction with
$[G,G']=1$. Furthermore, $|G'|\le 8$ by Lemma \ref{Lm:G128}. Since
$|K'|=|G'Z/Z|=8$, we have $|G'Z|=16$, and so $|G'|=8$, $G'<\mul\delta$,
$Z(G)\ge G'Z=\mul\delta$. By the second part of Lemma \ref{Lm:G128},
$Z(G)=\mul\delta$. Finally, $f(x,y,z) = f(y,z,x) = f(x,z,y)$ by Lemma
\ref{Lm:fgh} and $\cl{G}\le 2$, so $g(x,y,z) = f(x,y,z)^3=f(x,y,z)$.

Now suppose that $\cl{G}=3$. Then $Z=[G,G']\le G'$, so $K' = (G/Z)' = G'Z/Z =
G'/Z$, which implies $|G'|=16$ and thus $G'=G'Z = \mul\delta$. The rest of
(ii), (iii) has already been established.
\end{proof}

It is now easy to characterize all groups $G/Z$ from minimal setups. Note that
these groups are of interest for the associated loops $Q=G[\mu]$, too, since
$G/Z\cong Q/Z$.

\begin{proposition}\label{Pr:GModZ}
A group $K$ appears as $G/Z$ in a minimal setup if and only if $K$ satisfies
\eqref{Eq:GroupK}. All such groups appear already in scenario (i) of Theorem
\ref{Th:Scenarios}.
\end{proposition}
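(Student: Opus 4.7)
The forward direction is immediate from Theorem~\ref{Th:Scenarios}: in any minimal setup $|G|=128$ and $|Z|=2$, and each of the three scenarios records that $K=G/Z$ satisfies \eqref{Eq:GroupK}.

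For the converse, the plan is to exhibit, for every $K$ satisfying \eqref{Eq:GroupK}, a nontrivial new setup $(G,Z,\delta)$ with $G/Z\cong K$ and $\cl G=2$. Since then $|G|=128$ forces the setup to be minimal, and $\cl G=2$ rules out scenarios (ii) and (iii), Theorem~\ref{Th:Scenarios} will place it in scenario (i). Take $G=K\times\mathbb Z_2$ and $Z=\{1\}\times\mathbb Z_2$, so that $Z\le Z(G)$, $G/Z\cong K$, $|G|=128$ and $\cl G=\cl K=2$. By Lemma~\ref{Lm:K}, select $e_1,e_2,e_3\in K$ whose cosets form a basis of $K/K'$ and whose pairwise commutators form a basis of $K'$; the commutator therefore induces an $\mathbb F_2$-linear isomorphism $\tau:\wedge^2(K/K')\to K'$ between three-dimensional spaces.

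Let $\phi:(K/K')^3\to Z$ be the nontrivial alternating trilinear form (the determinant in the chosen basis), define the bilinear pairing $B:K'\times K/K'\to Z$ by $B(\tau(\bar x\wedge\bar y),\bar z)=\phi(\bar x,\bar y,\bar z)$, fix a set-theoretic transversal $T:K/K'\to K$ with $T(1)=1$, and for $x\in K$ write $x=x_{K'}\,T(\bar x)$ with $x_{K'}\in K'$. Set
\[
\delta(x,y)=B(x_{K'},\bar y)\,B(y_{K'},\bar x).
\]
Symmetry and $\delta(x,x)=1$ are visible from the formula. For \eqref{Eq:D2} one distinguishes the three cases (one of $x,y,z$ in $G'=K'\times\{1\}$); each reduces to linearity of $B$ in the appropriate slot, using the identity $(uv)_{K'}=u\cdot v_{K'}$ valid when $u\in K'\le Z(K)$. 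Condition \eqref{Eq:D3} simplifies because $\cl G=2$ forces $z^{yx}=z^{xy}$, so it becomes $\delta([z,y],x)=\delta([z,x],y)$, which by construction unwinds to $\phi(\bar z,\bar y,\bar x)=\phi(\bar z,\bar x,\bar y)$, an identity that holds in $\mathbb F_2$ since $\phi$ is alternating. Finally $g(e_1,e_2,e_3)=f(e_1,e_2,e_3)=\phi(\bar e_1,\bar e_2,\bar e_3)\ne 1$, so the setup is nontrivial and hence minimal.

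The genuine obstacle is to make \eqref{Eq:D2} and \eqref{Eq:D3} coexist while keeping $g$ nontrivial; the decisive point is that the commutator isomorphism $\tau$ furnished by Lemma~\ref{Lm:K} reduces the problem to a piece of linear algebra over $\mathbb F_2$ and lets us read off $\delta$ on $K'\times K$ directly from the alternating trilinear form $\phi$. The transversal $T$ only affects $\delta$ on $(K\setminus K')\times(K\setminus K')$, where no axiom places additional constraints, so the construction is robust with respect to this choice.
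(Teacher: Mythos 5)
Your argument is correct, but it takes a different route from the paper for the converse direction. The paper disposes of it in one line: given $K$ satisfying \eqref{Eq:GroupK}, it simply cites the results of \cite[\S 5]{DV1}, where nontrivial setups of scenario (i) with $G/Z\cong K$ were already constructed for every such $K$. You instead give a self-contained construction: take the split central extension $G=K\times\mathbb Z_2$, use Lemma \ref{Lm:K} to see that the commutator induces an isomorphism $\wedge^2(K/K')\to K'$ of $3$-dimensional $\mathbb F_2$-spaces, and pull the determinant form back through this isomorphism to define $\delta(x,y)=B(x_{K'},\bar y)B(y_{K'},\bar x)$. The verifications you sketch do go through: \eqref{Eq:D1}--\eqref{Eq:D2} follow because $Z$ has exponent $2$ (so the required antisymmetry of \eqref{Eq:D1} coincides with the visible symmetry) and $\delta(x,x)=B(x_{K'},\bar x)^2=1$; \eqref{Eq:D2} reduces to bilinearity of $B$ together with $(uv)_{K'}=u\,v_{K'}$ for $u\in K'=Z(K)$; \eqref{Eq:D3} collapses to $\delta([z,y],x)=\delta([z,x],y)$ since $z^{yx}=z^{xy}$ in a class-$2$ group, and this is the $\mathbb F_2$-symmetry of the alternating form; and $f(x,y,z)=\phi(\bar x,\bar y,\bar z)$ gives $g=\phi\ne 1$, so the setup is nontrivial, hence minimal by Theorem \ref{Th:Scenarios} (which puts $|G|\ge 128$ for minimal setups), and it lands in scenario (i) because $\cl G=2$ excludes (ii) and (iii). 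What each approach buys: the paper's citation is shorter and ties the proposition to the large family of examples already built in \cite{DV1}, whereas your construction makes the proposition independent of that reference and exhibits explicitly that the split extension $K\times\mathbb Z_2$ always works; it is in fact a class-$2$ analogue of the top-down construction ($g\Rightarrow f\Rightarrow\delta$) that the paper carries out in \S\ref{Sc:Min} for scenarios (ii) and (iii), with the transversal-dependent values playing the role of the free parameters \eqref{Eq:DeltaParams}.
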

\begin{proof}
By Theorem \ref{Th:Scenarios}, $K=G/Z$ from a minimal setup satisfies
\eqref{Eq:GroupK}. Conversely, assume that $K$ satisfies \eqref{Eq:GroupK}.
Then $K=G/Z$ for some $G$ in scenario (i) by the results of \cite[\S 5]{DV1}.
\end{proof}

\subsection{Constructing minimal setups for scenarios (ii) and (iii)}

We show how to construct all minimal setups. Note that all minimal setups of
scenario (i) of Theorem \ref{Th:Scenarios} were constructed already in \cite[\S
5]{DV1}, so it suffices to work with scenarios (ii) and (iii).

First we obtain a few auxiliary facts about minimal setups.

\begin{lemma}\label{Lm:gfDetermined}
Let $G$, $Z$, $\delta$ be a minimal setup. Then for every basis
$\{e_1,e_2,e_3\}\subseteq G$ of $G/\rad g$ we have \eqref{Eq:gForcesf}. In
particular, $g$ and $f$ are determined already by $G$ and $\rad g$.
\end{lemma}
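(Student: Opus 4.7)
The plan is to observe that the derivation of \eqref{Eq:gForcesf} carried out in the proof of Theorem~\ref{Th:Scenarios} for one particular basis is actually completely basis-free, and then to extract uniqueness of $g$ and $f$ from the formula.

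First, I would rerun, for an arbitrary basis $\{e_1,e_2,e_3\}$ of $G/\rad g$, the three short identities that appeared in the proof of Theorem~\ref{Th:Scenarios}: namely the rewritings of $f(e_2,e_3,e_1)$ and $f(e_3,e_1,e_2)$ in terms of $f(e_1,e_2,e_3)$ and values of $h$. The only ingredients used there are Lemma~\ref{Lm:fgh}(i),(ii) (the relation $f(z,x,y)=h(z,y,x)f(z,y,x)$ and $f(x,y,z)=f(y,x,z)^{-1}$) and the fact that $|Z|=2$, so that $f^2=1$ and inverses are trivial. None of these inputs depends on the particular choice of basis, so multiplying the three values gives \eqref{Eq:gForcesf} for any basis.

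For the strict inequality $g(e_1,e_2,e_3)\ne 1$, I would appeal to Lemma~\ref{Lm:fgh}(v) together with Lemma~\ref{Lm:Factor}: $g$ descends to a trilinear alternating form $(G/\rad g)^3\to\{a\in Z:a^2=1\}$ with $G/\rad g$ elementary abelian of order $8$. The space of trilinear alternating forms on $\mathbb{F}_2^3$ is one-dimensional, so there is a unique nontrivial such form (the determinant), and that form attains its nontrivial value on every basis. Since the setup is nontrivial, $g$ must be this form, and hence $g(e_1,e_2,e_3)\ne 1$.

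The same uniqueness observation yields the ``in particular'' claim. Up to the canonical identification $\{a\in Z:a^2=1\}\cong\mathbb{F}_2$, the form $g$ on $G/\rad g$ is the unique nontrivial trilinear alternating form, so $g$ is determined by the pair $(G,\rad g)$. Then \eqref{Eq:gForcesf} expresses $f(e_1,e_2,e_3)$ in terms of $g(e_1,e_2,e_3)$ and the commutators $[e_2,[e_1,e_3]]$, $[e_1,[e_2,e_3]]$, all of which are intrinsic to $G$. By multiplicativity of $f$ (Lemma~\ref{Lm:fgh}(ii)) combined with $\rad f=\rad g$ in a minimal setup (Lemma~\ref{Lm:RadfElem}), the values of $f$ on a single basis of $G/\rad f$ extend uniquely to all of $G^3$, so $f$ is determined by $G$ and $\rad g$. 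I do not anticipate a serious obstacle; the only subtle point worth verifying explicitly is that the basis-dependent values produced by \eqref{Eq:gForcesf} are consistent with multiplicativity and alternation, but this is automatic once the axioms of the new setup and Lemma~\ref{Lm:fgh} are in hand.
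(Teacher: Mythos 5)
Your proposal is correct and follows essentially the same route as the paper: nontrivial $g$ on $G/\rad g\cong\mathbb{F}_2^3$ must be the determinant form (hence nonzero on every basis), the derivation of \eqref{Eq:gForcesf} in the proof of Theorem~\ref{Th:Scenarios} uses only Lemma~\ref{Lm:fgh} and $|Z|=2$ and so is basis-free, and $f$ is then recovered from its values on basis triples. The only detail to make explicit, as the paper does, is that \eqref{Eq:gForcesf} fixes $f$ only on triples with distinct indices, while the repeated-index values are also forced -- $f(e_i,e_i,e_j)=1$ by definition, and $f(e_i,e_j,e_i)$, $f(e_j,e_i,e_i)$ equal $[e_i,[e_i,e_j]]^{\pm 1}$ by Lemma~\ref{Lm:fgh} -- so that all $27$ basis-triple values, and hence $f$ by multiplicativity and $\rad f=\rad g$, are determined by $G$ and $\rad g$.
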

\begin{proof}
By Theorem \ref{Th:Scenarios}, $G/\rad g$ is an elementary abelian group of
order $8$. Let $\{e_1\rad g$, $e_2\rad g$, $e_3\rad g\}$ be a basis of $G/\rad
g$. Since $g$ is nontrivial, it is the determinant, so $g(e_i,e_j,e_k)\ne 1$ if
and only if $i$, $j$, $k$ are distinct. This determines $g$ as a mapping
$G\times G\times G\to Z$. We have derived \eqref{Eq:gForcesf} in the proof of
Theorem \ref{Th:Scenarios}. It remains to show that $f$ is determined by $g$
and $G$. Indeed, $1=f(e_i,e_i,e_i)$, $1=f(e_i,e_i,e_j)$ determines
$f(e_i,e_j,e_i)$ and $f(e_j,e_i,e_i)$ by Lemma \ref{Lm:fgh}, $f(e_1,e_2,e_3)$
is determined by \eqref{Eq:gForcesf}, and this value determines
$f(e_i,e_j,e_k)$ whenever $i$, $j$, $k$ are distinct.
\end{proof}

Let $G$, $Z$, $\delta$ be a minimal setup from scenario (ii) or (iii), and let
$R=\rad\delta$, $M=\mul\delta = \rad f = \rad g$. Note that $M=N=G'R=G'$ here,
by Theorem \ref{Th:Scenarios}.

By Lemma \ref{Lm:K}, there are $e_1$, $e_2$, $e_3\in G$ such that $\{e_1M$,
$e_2M$, $e_3M\}$ is a basis for $G/M$, and $\{[e_1,e_2]Z$, $[e_1,e_3]Z$,
$[e_2,e_3]Z\}$ is a basis for $M/Z$.

In scenario (ii), $Z=R$, so we have a basis for $M/R$. In scenario (iii),
$|R/Z|=4$, and we can therefore assume without loss of generality that
$[e_1,e_2]R = [e_1,e_3]R$.

\medskip

We finally turn to the construction of all minimal setups. Let us therefore
forget about $\delta$, $f$ and $g$, but let us keep the groups $G$, $M$, $R$,
$Z$ and the elements $e_1$, $e_2$, $e_3$. Our goal is to construct a nontrivial
setup $G$, $Z$, $\delta$ with $M=\mul\delta$ and $R=\rad\delta$. Let $Z=\{1$,
$-1\}$.

First of all, the mapping $g:(G/M)^3\to Z$ must be a trilinear alternating
form, and hence we must and can set
\begin{displaymath}
    g(e_i,e_j,e_k) = \left\{\begin{array}{rl}
        -1,&\text{ if $i$, $j$, $k$ are distinct,}\\
        1,&\text{ else,}
    \end{array}\right.
\end{displaymath}
and then extend $g$ linearly.

Next we need a multiplicative mapping $f:(G/M)^3\to Z$ such that $g(x,y,z) =
f(x,y,z)f(y,z,x)f(z,x,y)$ and such that $f$ behaves as in Lemma \ref{Lm:fgh}.
Anticipating the equality $\delta([x,y],z) = f(x,y,z)$, we must set
\begin{displaymath}
    f(e_i,e_i,e_j)=1\text{ for $1\le i$, $j\le 3$}.
\end{displaymath}
Then Lemma \ref{Lm:fgh} forces
\begin{displaymath}
    f(e_i,e_j,e_i) = f(e_j,e_i,e_i) = [e_i,[e_i,e_j]]\text{ for $1\le i$, $j\le
    3$}.
\end{displaymath}
By Lemma \ref{Lm:gfDetermined}, we must set
\begin{displaymath}
    f(e_1,e_2,e_3) = f(e_2,e_1,e_3) = - [e_1,[e_2,e_3]][e_2,[e_1,e_3]],
\end{displaymath}
and then Lemma \ref{Lm:fgh} forces
\begin{align*}
    f(e_1,e_3,e_2)&=f(e_3,e_1,e_2) = -[e_2,[e_1,e_3]],\\
    f(e_2,e_3,e_1)&=f(e_3,e_2,e_1) = -[e_1,[e_2,e_3]].
\end{align*}
A straightforward calculation yields $g(e_i,e_j,e_k) =
f(e_i,e_j,e_k)f(e_j,e_k,e_i)f(e_k,e_i,e_i)$ for every $1\le i$, $j$, $k\le 3$.

We can now extend $f$ linearly into a mapping $(G/M)^3\to Z$, and force $\rad
f = M$.

Finally, we need to construct $\delta:(G/R)^3\to Z$ so that
$\delta([x,y],z)=f(x,y,z)$ and \eqref{Eq:D0}--\eqref{Eq:D3} hold. We will
encounter a difficulty in scenario (iii), which is why we only managed to
answer some questions concerning scenario (iii) using a computer.

Set
\begin{displaymath}
    \delta([e_i,e_j],e_k) = f(e_i,e_j,e_k)\text{ for $1\le i$, $j$, $k\le 3$}.
\end{displaymath}
Since $G'=M$, we can now attempt to extend $\delta$ into a mapping
$M/R\times\{e_1,e_2,e_3\}\to Z$. This unique extension is well defined in
scenario (ii), as the values $[e_1,e_2]$, $[e_1,e_3]$, $[e_2,e_3]$ are linearly
independent modulo $R$. But in scenario (iii) the extension might not exist,
and this can be verified with a computer in each particular case.

Assuming that $\delta:M/R\times\{e_1,e_2,e_3\}\to Z$ is well-defined, we extend
it routinely into a mapping $M/R\times G/R\to Z$, using $M=\rad f$.

The last step is to extend $\delta$ into a mapping $G/R\times G/R\to Z$, and
this involves some free parameters. Namely, let $T=\{t_1=1$, $\dots$, $t_n\}$
be a transversal to $M/R$ in $G/R$, and for $1\le i$, $j\le 3$ choose
$\delta(t_i,t_j)$ as follows:
\begin{align}
    &\delta(t_1,t_j) = 1 \text{ for every $1\le j\le n$},\notag\\
    &\delta(t_i,t_j) \text{ arbitrary when $1<i<j\le n$},\notag\\
    &\delta(t_j,t_i)=\delta(t_i,t_j)^{-1} \text{ when $1<i<j\le n$},\label{Eq:DeltaParams}\\
    &\delta(t_i,t_i)=1 \text{ for every $1\le i\le n$.}\notag
\end{align}
Every element $h\in G/R$ can be written uniquely as $h=mt$ for some $m\in M/R$,
$t\in T$. We define $\delta:G/R\times G/R\to Z$ by
\begin{displaymath}
    \delta(mt,m't') = \delta(m,t')\delta(m',t)^{-1}\delta(t,t'),
\end{displaymath}
where $\delta(m,t')$, $\delta(m',t)$ have already been defined above. We leave
it to the reader to check that this correctly defines $\delta:G\times G\to Z$
satisfying \eqref{Eq:D0}--\eqref{Eq:D3} and $\delta([x,y],z) = f(x,y,z)$.

We have arrived at a minimal setup $G$, $Z$, $\delta$.

\subsection{The groups $G$ in minimal setups}\label{Ss:ComputationalResults}

For the sake of completeness, we now describe the groups $G$ and $G/Z$ that
appear in individual scenarios of Theorem \ref{Th:Scenarios}:
\begin{enumerate}

\item[$\bullet$] The groups $G/Z$ of scenario (i) (respective (ii)) are
    precisely the groups $K$ satisfying \eqref{Eq:GroupK}. There are $10$
    such groups, identified as $(64,73)$--$(64,82)$ in GAP \cite{GAP}.

\item[$\bullet$] The groups $G/Z$ of scenario (iii) are precisely the
    groups $(64,73)$--$(63,76)$ and $(64,80)$ of GAP, by computer search.

\item[$\bullet$] The groups $G$ of scenario (i) are precisely the groups
    $G$ such that: $\cl{G}=2$, there is $Z\le Z(G)$ such that $|Z|=2$ and
    $K=G/Z$ satisfies \eqref{Eq:GroupK}. There are $19$ such groups,
    identified in GAP as $(128,m)$ for $m\in\{$170--178, 1116--1119,
    1121--1123, 1125, 1126, 1132$\}$.

\item[$\bullet$] The groups $G$ of scenario (ii) are precisely the groups
    $G$ such that: $\cl{G}=3$, $Z=[G,G']\le Z(G)$, $|Z|=2$, and $K=G/Z$
    satisfies \eqref{Eq:GroupK}. There are $106$ such groups, identified as
    $(128,731)$--$(128,836)$ in GAP.

\item[$\bullet$] The groups $G$ of scenario (iii) are precisely the $10$
    groups $(128,m)$ for $m\in\{742$, $749$, $753$, $754$, $761$, $762$,
    $776$, $794$, $823$, $830\}$ of GAP, by computer search.
\end{enumerate}

\section{Examples of loops of Cs\"org\H{o} type}\label{Sc:Examples}

Very many examples of loops of Cs\"org\H{o} type can be constructed from
Theorem \ref{Th:Scenarios} due to the free parameters (in
\eqref{Eq:DeltaParams} and \eqref{Eq:CPS}) used in the top to bottom
construction of $\mu$ from $g$. All examples listed below were obtained with
the LOOPS \cite{LOOPS} package for GAP \cite{GAP} by using trivial values for
all free parameters.

To get a loop of Cs\"org\H{o} type as in scenario (ii) of Theorem
\ref{Th:Scenarios}, let $G$ be the group of order $128$ generated by $g_1$,
$\dots$, $g_7$ with $Z(G)=\langle g_5, g_6, g_7\rangle$ subject to the
relations $g_1^2=1$, $[g_2,g_1]=g_4$, $[g_3,g_1]=g_5$, $[g_4,g_1]=g_7$,
$[g_3,g_2]=g_6$, $[g_4,g_2]=g_7$, $g_3^2=1$, $[g_4,g_3]=1$, $g_4^2=g_7$,
$g_5^2=1$, $g_6^2=1$, $g_7^2=1$. (This is the group identified as $(128,731)$
in GAP.)

Then $G'=\langle g_4,Z(G)\rangle$ and $\cl{G}=3$. Set $Z=R=\langle g_7\rangle$,
$M=G'$, $e_1=g_1$, $e_2=g_2$, $e_3=g_3$. Then the resulting loop $Q = G[\mu]$
satisfies $|Q|=128$, $\cl{Q}=3$, $\inn Q\cong \mathbb Z_4\times\mathbb
Z_4\times\mathbb Z_2\times\mathbb Z_2$, $|\mlt{Q}|=8192$.

To get a loop of Cs\"org\H{o} type as in scenario (iii) of Theorem
\ref{Th:Scenarios}, let $G$ be the group of order $128$ generated by $g_1$,
$\dots$, $g_7$ with $Z(G) = \langle g_4g_5g_6,g_7\rangle$ subject to relations
$g_1^2=1$, $[g_2,g_1]=g_4$, $[g_3,g_1]=g_5$, $[g_4,g_1]=g_7$, $[g_5,g_1]=1$,
$[g_6,g_1]=g_7$, $g_2^2=1$, $[g_3,g_2]=g_6$, $[g_4,g_2]=g_7$, $[g_5,g_2]=g_7$,
$[g_6,g_2]=1$, $g_3^2=1$, $[g_4,g_3]=1$, $[g_5,g_3]=1$, $[g_6,g_3]=1$,
$g_4^2=g_7$, $[g_5,g_4]=1$, $[g_6,g_4]=1$, $g_5^2=1$, $[g_6,g_5]=1$, $g_7^2=1$.
(This is the group identified as $(128,742)$ in GAP.)

Then $G'=\langle g_4,g_5,g_6,g_7\rangle$ and $\cl{G}=3$. Set $Z = \langle
g_7\rangle$, $R = \langle g_5,g_6,g_7\rangle$, $M=G'$, and $e_1=g_1$,
$e_2=g_2$, $e_3=g_2g_3$. Then $R/Z$ is the Klein group, and the resulting loop
$Q = G[\mu]$ satisfies $|Q|=128$, $\cl{Q}=3$, $\inn Q\cong \mathbb
Z_4\times\mathbb Z_4\times\mathbb Z_2\times\mathbb Z_2$, $|\mlt{Q}|=8192$.

\subsection{A class of examples with $\inn{Q}$ not elementary abelian}

\begin{lemma}\label{Lm:T2}
Let $Q=G[\mu]$ be constructed from a minimal setup. For $x\in Q$, let $T_x =
R_x^{-1}L_x$ be the conjugation by $x$ in $Q$. Then $(T_x)^2 y =
y^{x^2}f(y,x,x)$ for every $x$, $y\in Q$.
\end{lemma}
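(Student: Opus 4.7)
My plan is to compute $T_x y$ explicitly in $G$, iterate, and collapse the resulting $Z$-valued correction. For the first step, if $z = T_x y$ then $z*x = x*y$ reads $zx\mu(z,x) = xy\mu(x,y)$ in $G$; since $\mu$ takes values in $Z \le Z(G)$, this uniquely determines $z$ in $G$, and because $\mu$ factors through $G/R$ while $z \equiv xyx^{-1}\pmod Z$, we may replace $\mu(z,x)$ by $\mu(y^x, x)$. This gives
\[
T_x y = y^x \cdot \alpha(y), \qquad \alpha(y) := \mu(x,y)\,\mu(y^x, x)^{-1} \in Z,
\]
where $y^x$ denotes the $G$-conjugate of $y$ by $x$.

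Iterating once more, centrality of $\alpha(y) \in Z$ gives $(y^x\alpha(y))^x = y^{x^2}\alpha(y)$, and the inclusion $Z \le R$ gives $\alpha(y^x\alpha(y)) = \alpha(y^x)$. Hence $T_x^2 y = y^{x^2}\,\alpha(y)\alpha(y^x)$, and the claim reduces to showing $\alpha(y)\alpha(y^x) = f(y,x,x) = \delta([y,x],x)$. For this I expand $y^x = y[y,x]$ and $y^{x^2} = y[y,x^2]$, noting that $[y, x^2] \equiv [y,x]^2 \pmod R$ since $\cl{G/R} \le 2$ by Lemma \ref{Lm:fgh}(i). Applying the partial multiplicativity conditions \eqref{Eq:Mu1} and \eqref{Eq:Mu2} to the commutator $[y,x] \in G' \le N$ then expresses $\mu(y^x, x)$, $\mu(x, y^x)$, and $\mu(y^{x^2}, x)$ in terms of $\mu(y,x)$, $\mu(x,y)$, $\mu([y,x], x)$, and $\mu(x, [y,x])$.

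Substituting these expansions into $\alpha(y)\alpha(y^x)$ and collecting like terms produces $\delta(x,y)^2\,\mu([y,x], x)^{-3}\,\mu(x, [y,x])$. By Theorem \ref{Th:Scenarios}, a minimal setup forces $|Z|=2$, so $\delta(x,y)^2 = 1$ and every element of $Z$ equals its own inverse; the expression collapses to $\mu([y,x],x)\mu(x,[y,x]) = \mu([y,x], x)\mu(x, [y,x])^{-1} = \delta([y,x], x) = f(y,x,x)$, as desired. The main obstacle is the commutator bookkeeping, namely tracking which pieces of $y^x$ and $y^{x^2}$ survive modulo $R$ and which vanish inside $\mu$; once this is done, the minimality assumption $|Z|=2$ makes the residual powers and inverses collapse cleanly to the single commutator factor $\delta([y,x],x)$.
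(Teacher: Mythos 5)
Your proof is correct and follows essentially the same route as the paper: an explicit formula for $T_x y$ (which the paper imports from \cite[Lemma 4.4]{DV1} rather than rederiving from $T_x=R_x^{-1}L_x$), iteration using $\alpha(y)\in Z\le R$, and collapse of the $Z$-valued correction via \eqref{Eq:Mu1}--\eqref{Eq:Mu2}, $[y,x^2]\equiv [y,x]^2 \pmod{Z}$ and $|Z|=2$; your intermediate expression $\delta(x,y)^2\mu([y,x],x)^{-3}\mu(x,[y,x])$ is exactly the paper's bookkeeping done in one batch. One small caution: keep the side of conjugation consistent --- your first step yields $xyx^{-1}$ but you later expand $y^x=y[y,x]=x^{-1}yx$; this is harmless here only because $\mu$ and $\delta$ are read modulo $R\ge Z$ and, in a minimal setup, $[y,x^2]$ lies in $Z$ of order $2$, so both conventions give the same element $y^{x^2}$ in the final formula.
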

\begin{proof}
By Theorem \ref{Th:Scenarios} we have $|Z|=2$. By \cite[Lemma 4.4]{DV1},
\begin{displaymath}
    T_x(y) = y^x\mu(y,x)\mu(x,y^x)^{-1} = y^x\mu(y,x)\mu(x,y^x).
\end{displaymath}
Then
\begin{displaymath}
    (T_x)^2y = T_x(y^x\mu(y,x)\mu(x,y^x)) =
    y^{x^2}\mu(y,x)\mu(x,y^x)\mu(y^x,x)\mu(x,y^{x^2}),
\end{displaymath}
where we have used $\im{\mu}\le Z\le Z(G)$ and $Z\le\rad{\mu}$. Now,
$\mu(x,y^x)\mu(y^x,x) = \delta(y^x,x) = \delta(y[y,x],x) =
\delta(y,x)\delta([y,x],x) = \delta(y,x)f(y,x,x)$ by $G'\le\mul\delta$, and
thus $(T_x)^2y = y^{x^2}\mu(y,x)\delta(y,x)f(y,x,x)\mu(x,y^{x^2})$.
Furthermore, $\mu(y,x)\delta(y,x)\mu(x,y^{x^2}) = \mu(x,y)\mu(x,y[y,x^2]) =
\mu(x,y)\mu(x,y)\mu(x,[y,x^2]) = \mu(x,[y,x^2])$, and $[y,x^2] = [y,x]^2z$ for
some $z\in Z$, so $\mu(x,[y,x^2]) = \mu(x,[y,x]^2) = \mu(x,[y,x])^2 = 1$.
\end{proof}

By Lemma \ref{Lm:T2}, in order to obtain $Q=G[\mu]$ from a minimal setup so
that $\inn{Q}$ is not elementary abelian, it suffices to choose $K=G/Z$ as one
of the groups satisfying \eqref{Eq:GroupK}, take $G$ as a central extension of
$K$ by the cyclic group of order $2$ so that $[y,x^2]\ne 1$ (hence $y^{x^2}\ne
y$) and $[x,[x,y]]=1$ (hence $f(y,x,x) = f(x,x,y)[x,[x,y]] = [x,[x,y]]=1$) for
some $x$, $y\in G$.

\end{document}